\numberwithin{equation}{section}
\definecolor{gray}{gray}{0.5}
\def\R{\mathbb{R}}
\def\N{\mathbb{N}}
\def\Z{\mathbb{Z}}
\def\mc{\mathcal}
\def\h{\mathcal{H}}
\def\mn{M^{(n)}}
\def\lesim{\lesssim}
\def\beq{\begin{equation}}
\def\endeq{\end{equation}}
\def\bs{\begin{split}}
\def\es{\end{split}}
\def\uk{u^{(k_0)}}
\def\pj{P^{(2)}}
\def\pi{P^{(1)}}
\def\pc{P^{\mathrm{Cone}}}
\def\a{\alpha}
\def\b{\beta}
\theoremstyle{plain}
\newtheorem{thm}{Theorem}[section]
\newtheorem{prop}[thm]{Proposition}
\newtheorem{lem}[thm]{Lemma}
\newtheorem{cor}[thm]{Corollary}
\newtheorem{claim}[thm]{Claim}
\newtheorem*{conj*}{Conjecture}
\newtheorem*{openproblem*}{Open Problem}
\newtheorem{openproblem}{Open Problem}
\theoremstyle{remark}
\newtheorem{rem}{Remark}[section]
\begin{document}

\title[Variable non-flat homogeneous curves]{Maximal operators and Hilbert transforms along variable non-flat homogeneous curves}

\author[S. Guo]{Shaoming Guo}
\address{Shaoming Guo: University of Bonn, Endenicher Allee 60, 53115 Bonn, Germany. Current Address: Indiana University Bloomington, 831 E Third St, Bloomington, IN 47405, USA}
\email{shaoguo@iu.edu}

\author[J. Hickman]{Jonathan Hickman}
\address{Jonathan Hickman: University of Chicago, 5734 S. University Avenue, Chicago, Illinois, 60637.}
\email{jehickman@uchicago.edu}

\author[V. Lie]{Victor Lie}
\address{Victor Lie: Department of Mathematics, Purdue, IN 46907 USA
\and Institute of Mathematics of the
Romanian Academy, Bucharest, RO 70700, P.O. Box 1-764, Romania.}
\email{vlie@purdue.edu}

\author[J. Roos]{Joris Roos}\address{Joris Roos: University of Bonn, Mathematical Institute, Endenicher Allee 60, 53115 Bonn, Germany}
\email{jroos@math.uni-bonn.de}

\date{\today}
\subjclass[2010]{42B20, 42B25, 44A12}

\thanks{The third author was supported in part by NSF grant DMS-1500958. The fourth author is supported by the German National Academic Foundation.}

\begin{abstract}
 We prove that the maximal operator associated with variable homogeneous planar curves $(t, u t^{\alpha})_{t\in \mathbb{R}}$, $\alpha\not=1$ positive, is bounded on $L^p(\mathbb{R}^2)$ for each $p>1$, under the assumption that $u:\mathbb{R}^2 \to \mathbb{R}$ is a Lipschitz function. Furthermore, we prove that the Hilbert transform associated with $(t, ut^{\alpha})_{t\in \mathbb{R}}$, $\alpha\not=1$ positive, is bounded on $L^p(\mathbb{R}^2)$ for each $p>1$, under the assumption that $u:\mathbb{R}^2\to \mathbb{R}$ is a measurable function and is constant in the second variable. Our proofs rely on stationary phase methods, $TT^*$ arguments, local smoothing estimates and a pointwise estimate for taking averages along curves.
 \end{abstract}

\maketitle
\section{Introduction}

This paper focuses on the study of certain maximal and singular integral operators that act by integration along \textit{variable} homogeneous curves in the plane, of the form
$$\Gamma_{u}^{\alpha}(t):=(t,u\cdot [t]^\alpha)\,,$$
where here and throughout the paper, the exponent $\alpha$ is a fixed positive real number, the notation $[t]^\alpha$ stands for either $|t|^\alpha$ or $\mathrm{sgn}(t)|t|^\alpha$, while the ``coefficient" $u(\cdot,\,\cdot)$ is allowed to change depending on the base point $(x,y)\in\R^2$.

More precisely, given $u: \R^{2}\to \R$ a \textit{measurable} function and $0<\varepsilon_0\leq\infty$ a parameter, we consider the following objects:
\begin{itemize}

\item the ($\varepsilon_0$-truncated) \textbf{maximal operator along $\Gamma_{u}^{\alpha}$}, defined by
\beq\label{eqn:Maxdef}
\mc{M}^{(\alpha)}_{u,\varepsilon_0}f(x, y)=\sup_{0<\varepsilon<\varepsilon_0}\frac{1}{2\varepsilon}\int_{-\varepsilon}^{\varepsilon}|f(x-t, y-u(x, y)[t]^{\alpha})|dt.
\endeq

\item the ($\varepsilon_0$-truncated) \textbf{Hilbert transform along $\Gamma_{u}^{\alpha}$}, given by

\beq\label{eqn:HTdef}
\mc{H}^{(\alpha)}_{u,\varepsilon_0}f(x, y)=\textrm{p.v.}\:\int_{-\varepsilon_0}^{\varepsilon_0}  f(x-t, y-u(x, y)[t]^{\alpha})\frac{dt}{t}.
\endeq
\end{itemize}

For convenience, in what follows we will use the convention that $[t]^1=t$. Moreover, when $\alpha=1$, we will leave out the dependence on $\alpha$ and simply write $\mc{M}_{u, \epsilon_0}$ and $\mc{H}_{u,\varepsilon_0}$. The same principle applies to $\epsilon_0=\infty$: in this case we will simply write $\mc{M}^{(\alpha)}_u$ and $\mc{H}^{(\alpha)}_{u}$, respectively.\\

A difficult problem in the area of harmonic analysis is to understand the weakest possible regularity assumptions on $u$ that guarantee the $L^p$ boundedness of $\mc{M}^{(\alpha)}_{u,\varepsilon_0}$ and $\mc{H}^{(\alpha)}_{u,\varepsilon_0}$. Our aim in this paper is to provide a partial solution to this problem when we impose a nontrivial curvature condition by requiring $\alpha\not=1$.\\

We will now state our main results. The first result regards the boundedness of the maximal operator \eqref{eqn:Maxdef} and extends an earlier result of Marletta and Ricci \cite{MR}.\\
\newpage

\begin{thm}\label{thm:mainmax}
Let $\alpha>0$ and $\alpha\not=1$. Then the following hold:
\begin{enumerate}
\item \emph{\cite{MR}} If $u: \R^2\to \R$ is measurable, then for every $2<p\leq\infty$ we have
\beq\label{eqn:MaxLpbd1}
\|\mc{M}^{(\alpha)}_{u}f\|_p \le C_{p, \alpha}\|f\|_p\:.
\endeq
\item If $u: \R^2\to \R$ is Lipschitz, then there exists $\varepsilon_0=\varepsilon_0(\|u\|_{\mathrm{Lip}})>0$ such that for every $1<p\leq2$ we have
\beq\label{eqn:MaxLpbd2}
\|\mc{M}^{(\alpha)}_{u,\varepsilon_0}f\|_p \le C_{p, \alpha}\|f\|_p\:.
\endeq
\end{enumerate}
Here $C_{p, \alpha}\in (0,\infty)$ is a constant that depends only on $p$ and $\alpha$.
\end{thm}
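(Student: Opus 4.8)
We only treat part (2) of Theorem~\ref{thm:mainmax}; part (1) is the cited theorem of Marletta and Ricci. The plan is to reduce, by linearisation and a \emph{freezing} argument, to constant-coefficient maximal operators along non-flat homogeneous curves, where the desired $L^p$ bounds are classical. \textbf{Reductions.} Since $\mc{M}^{(\alpha)}_{u,\varepsilon_0}f\le\mc{M}^{(\alpha)}_{u}f$ pointwise, part (1) yields \eqref{eqn:MaxLpbd1} for $p>2$, so by Marcinkiewicz interpolation it suffices to prove \eqref{eqn:MaxLpbd2} for $1<p<2$. I would first linearise the supremum by a measurable selection $\varepsilon(x,y)\in(0,\varepsilon_0)$, round it to a power of two, smooth the truncation, and sum a geometric series in the scale; this reduces matters to the lacunary maximal operator
\beq
\mc{A}^{(u)}_{\ast}f(x,y)=\sup_{2^{-k}\le\varepsilon_0}\Big|\int_{\R}f\big(x-2^{-k}s,\,y-u(x,y)2^{-k\alpha}[s]^{\alpha}\big)\psi(s)\,ds\Big|,
\endeq
$\psi$ a fixed bump on $\{\tfrac12\le|s|\le2\}$, whose $k$-th single-scale operator I denote $A^{(u)}_k$. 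Peeling off the region where $|u|$ is small --- there $\mc{A}^{(u)}_\ast f$ is controlled by an iterated Hardy--Littlewood maximal function --- and decomposing the rest into dyadic level sets of $|u|$ followed by an anisotropic rescaling of the vertical variable, one may assume $|u|\sim1$ on $\mathrm{supp}\,u$; here the Lipschitz hypothesis is used, so that the level sets are stable at the relevant scales (for $\varepsilon_0$ chosen $\alpha$-dependently small) and the rescaling does not increase $\|u\|_{\mathrm{Lip}}$.

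\textbf{A frequency dichotomy.} Write $f=\sum_l P_l f$ with $P_l$ localising the frequency dual to $y$ to $|\eta|\sim2^l$. I would split $A^{(u)}_k f=\sum_l A^{(u)}_k P_l f$ into a \emph{low-frequency} part ($2^l\lesssim 2^{k\alpha}\|u\|_{\mathrm{Lip}}^{-1}\varepsilon_0^{-1}$) and a \emph{high-frequency} part: in the low range the vertical displacement $u(x,y)2^{-k\alpha}[s]^{\alpha}$ varies slowly on $2^{-k}$-balls relative to the scale on which $P_lf$ oscillates in $y$, so $u$ can be frozen; in the high range the phase $\eta u2^{-k\alpha}[s]^{\alpha}$ genuinely oscillates, and because $\alpha\ne1$ stationary phase provides decay.

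\textbf{Low frequencies: the pointwise estimate.} Tile $\R^2$ by rectangles $R$ of dimensions $\sim\varepsilon_0\times\varepsilon_0^{\alpha}$ and set $\lambda_R=u(z_R)$ for a centre $z_R\in R$, so that $|u(z)-\lambda_R|\lesssim\|u\|_{\mathrm{Lip}}\varepsilon_0$ on $R$; choose $\varepsilon_0\le c\,\|u\|_{\mathrm{Lip}}^{-1}$ with $c$ a small absolute constant. For $(x,y)\in R$ and $2^{-k}\le\varepsilon_0$ the displaced point lies within $\|u\|_{\mathrm{Lip}}\varepsilon_0\cdot2^{-k\alpha}$ of the frozen curve $\{(x-2^{-k}s,\,y-\lambda_R2^{-k\alpha}[s]^{\alpha})\}$, i.e.\ within a fraction $\lesssim c$ of that curve's vertical extent $\sim2^{-k\alpha}$ (this is where $|u|\sim1$ enters). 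Converting point values to local averages via the frequency localisation and summing the resulting geometric series in $c$, one obtains a \emph{pointwise estimate for averages along curves}: on $R$, the low-frequency part of $\mc{A}^{(u)}_\ast f$ is $\lesssim\mc{N}_{\lambda_R}[\mathsf{M}|f|]$, where $\mc{N}_\lambda$ is the $\varepsilon_0$-truncated lacunary maximal operator along the constant-coefficient curve $(t,\lambda[t]^{\alpha})$ and $\mathsf{M}$ a fixed Hardy--Littlewood-type operator absorbing the thickening. The anisotropic dilation $(x,y)\mapsto(x,\lambda y)$ gives $\|\mc{N}_\lambda g\|_p\lesssim\|g\|_p$ \emph{uniformly in} $\lambda$, reducing everything to the maximal operator along the fixed non-flat homogeneous curve $(t,[t]^{\alpha})$, $\alpha\ne1$ --- bounded on $L^p(\R^2)$ for all $p>1$ by the classical curvature theory (Stein--Wainger, Nagel--Stein--Wainger), or by the local smoothing estimates for planar curve averages mentioned in the abstract. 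Since the enlarged rectangles $\{CR\}$ have bounded overlap, these local bounds assemble, for every $p>1$.

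\textbf{High frequencies, and the main obstacle.} For the high-frequency part ($2^l\gtrsim2^{k\alpha}$) freezing is unavailable, so I would estimate $A^{(u)}_kP_lf$ directly in $L^2$ by a $TT^*$ argument: the kernel of $A^{(u)}_k(A^{(u)}_k)^*$ encodes the relative position of two nearby members of the curve family, which have non-vanishing curvature since $\alpha\ne1$, so variable-coefficient stationary phase gives $\|A^{(u)}_kP_lf\|_2\lesssim2^{-\delta(l-k\alpha)}\|P_lf\|_2$ for some $\delta=\delta(\alpha)>0$ (the bi-Lipschitz change of variables needed to set up $(A^{(u)}_k)^*$ is where the Lipschitz regularity and the smallness of $\varepsilon_0$ re-enter). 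Interpolating with the trivial bound $\|A^{(u)}_k\|_{L^p\to L^p}\lesssim1$, summing over $l$, and then over $k$ by an $\ell^p$ estimate, handles this part for all $1<p<2$. The real difficulty of the proof is to organise the whole two-parameter scheme so that the freezing errors are summable \emph{simultaneously} over scales $k$, frequency bands $l$, and the tiling $\{R\}$ --- which forces the dichotomy threshold and the choice of $\varepsilon_0(\|u\|_{\mathrm{Lip}})$ to be made compatibly --- together with the careful handling of the flat region $\{u\approx0\}$ and of the level-set decomposition, which is most delicate when $0<\alpha<1$.
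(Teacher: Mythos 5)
There is a genuine gap in the high-frequency part of your argument, and it is exactly where the difficulty of the theorem lives. Your plan is: prove $\|A^{(u)}_k P_l f\|_2\lesssim 2^{-\delta(l-k\alpha)}\|P_lf\|_2$ by $TT^*$, interpolate with the trivial $L^{p}$ bound, sum over $l$, and then control the supremum over $k$ "by an $\ell^p$ estimate". The last step fails for $p<2$: bounding $\sup_k$ by the $\ell^p$-sum over $k$ and using the decayed single-piece bounds leads to a quantity of the form $\bigl(\sum_{m}\|P_m f\|_p^p\bigr)^{1/p}$, and for $p<2$ the Littlewood--Paley inequality goes the wrong way --- one has $\|f\|_p\lesssim\bigl(\sum_m\|P_mf\|_p^p\bigr)^{1/p}$, not the reverse, so this quantity is not controlled by $\|f\|_p$ (spread $\widehat f$ over $N$ annuli with equal $L^p$ masses to see the loss $N^{1/p-1/2}$). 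This $\ell^p$-majorization of the supremum is only admissible for $p\ge 2$, which is why the paper uses it freely in the measurable case but must replace it by an $\ell^2$-valued (square function) estimate for $p\le 2$. Obtaining that square-function estimate is the actual content of the proof: the paper interpolates, in a vector-valued Nagel--Stein--Wainger iteration, a decayed $p>2$ estimate (coming from local smoothing/decoupling for the cone, Theorem \ref{0813theorem8.1}) against a no-decay single-scale $L^p$ bound valid for all $p>1$, and the latter is where the Lipschitz hypothesis and the truncation $\varepsilon_0\lesssim\|u\|_{\mathrm{Lip}}^{-1}$ are really used, via the change of variables $(x,y)\mapsto(x-t,\,y-\tilde u(x,y)[t]^\alpha)$ whose Jacobian $1-\partial_y\tilde u\cdot[t]^\alpha$ stays bounded below. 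This also requires auxiliary modifications of $u$ and a maximal operator along curves in lacunary directions (Lemma \ref{2704lemma3.6}); none of this machinery, or a substitute for it, appears in your proposal.

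A second, related problem: the claimed single-scale $L^2$ decay via $TT^*$ for a two-variable $u$ is asserted but not justified, and it is doubtful as stated. The paper's $TT^*$ argument (Section \ref{sec:singleann}) works only for one-variable fields $u(x)$, where the partial Fourier transform in $y$ diagonalizes the operator; for $u=u(x,y)$ the composition $A^{(u)}_k(A^{(u)}_k)^*$ entangles $u$ at two different base points in both variables, and no uniform stationary-phase bound is available --- indeed the corresponding single-annulus estimate is false for $p\le2$ for merely measurable $u$, and the paper deliberately routes all decay through $p>2$ local smoothing precisely to avoid this. Your low-frequency freezing argument and the reduction to $|u|\sim 1$ are plausible in spirit (the paper's low-frequency cases are handled even more simply, by comparison with $f(x-t,y)$ and the strong maximal function), but without a correct mechanism for the high-frequency, $p<2$ regime the proof does not close.
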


The second main result regards the boundedness of the Hilbert transform \eqref{eqn:HTdef}. 

\begin{thm}\label{thm:mainHT}
Let $\alpha>0$ and $\alpha\not=1$. Let $u: \R^2\to \R$ be a measurable function and assume that
\beq\label{eqn:onevarassumpt}
u(x, y)=u(x,0) \text{ for every } x, y\in \R.
\endeq

Then we have that for all $1<p<\infty$ the following holds:
\beq\label{eqn:HTLpbd}
\|\mc{H}^{(\alpha)}_{u}f\|_p \le C_{p, \alpha}\|f\|_p.
\endeq
The constant $C_{p,\alpha}\in (0,\infty)$ depends only on $p$ and $\alpha$.
\end{thm}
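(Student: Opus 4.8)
The plan is to use the structural hypothesis \eqref{eqn:onevarassumpt} to decouple the two variables and reduce \eqref{eqn:HTLpbd} to a one-dimensional oscillatory singular integral estimate that is uniform over all measurable coefficient functions. Since $u(x,y)=u(x)$ does not depend on $y$, the operator $\mc{H}^{(\alpha)}_u$ commutes with every Fourier multiplier in $y$; applying the partial Fourier transform in $y$ realises $\mc{H}^{(\alpha)}_u$ as the direct integral over the dual variable $\eta$ of the one-dimensional operators
\beq
H_\eta g(x) = \textrm{p.v.}\int_{\R} g(x-t)\, e^{- i u(x)[t]^{\alpha}\eta}\,\frac{dt}{t}
\endeq
(up to the usual normalising constant). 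A parabolic rescaling $t\mapsto |\eta|^{-1/\alpha}t$, $x\mapsto |\eta|^{-1/\alpha}x$ --- which only replaces the measurable function $u$ by another measurable function and changes no norms --- normalises $|\eta|=1$, so after conjugation it suffices to bound, on $L^{p}(\R)$ and \emph{uniformly over all measurable} $u:\R\to\R$, the single operator $Hg(x)=\textrm{p.v.}\int g(x-t)\,e^{-iu(x)[t]^{\alpha}}\,\frac{dt}{t}$. The case $p=2$ of \eqref{eqn:HTLpbd} then follows by Plancherel in $y$, while for general $p$ one runs a Littlewood--Paley decomposition in $y$ (whose frequency projections commute with $\mc{H}^{(\alpha)}_u$), which reduces matters to a weighted, or vector-valued, form of the one-dimensional bound.

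For the one-dimensional operator I would split $H=\sum_{j\in\Z}H^{j}$ according to $|t|\sim 2^{j}$ and write each piece through its variable symbol $m_{j}(u(x),\cdot)$, where $m_{j}(v,\xi)=\int e^{-i(t\xi+v[t]^{\alpha})}\,\psi(t/2^{j})\,\frac{dt}{t}$ with $\psi$ a bump adapted to $|t|\sim1$. The phase $\phi(t)=t\xi+v[t]^{\alpha}$ has $\phi''(t)=\alpha(\alpha-1)\,v\,[t]^{\alpha-2}$, which is non-vanishing \emph{precisely because $\alpha\neq1$}; this is the curvature the whole argument rests on. When $|v|\,2^{j\alpha}\lesssim1$ the phase is $O(1)$ and $m_{j}(v,\xi)$ agrees, up to an integrable error, with the symbol of a truncated Hilbert transform in $x$. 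The nontrivial regime is $|v|\,2^{j\alpha}\gtrsim1$: here Van der Corput and stationary phase give rapid decay of $m_{j}$ and of its $\xi$-derivatives away from the curved resonant set $\{\,|\xi|\sim|v|\,2^{j(\alpha-1)}\,\}$, together with the square-root gain $|m_{j}(v,\xi)|\lesssim (|v|\,2^{j(\alpha-2)})^{-1/2}$ and matching derivative bounds on that set. Crucially, none of these estimates uses any regularity of $u$, which is what makes the measurable case accessible.

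The $L^{2}$ bound for $H$ I would then obtain by a $TT^{*}$/Cotlar--Stein argument. After removing the low-oscillation pieces --- which reassemble into a truncated version of the Hilbert transform in $x$ and hence are bounded --- one expresses the kernel of $H^{j}(H^{j'})^{*}$ as an oscillatory double integral in $(t,t')$ and inserts the symbol estimates above, exploiting the transversality, for $j\neq j'$, of the resonant sets $\{\,|\xi|\sim|u(x)|\,2^{j(\alpha-1)}\,\}$ and $\{\,|\xi|\sim|u(x')|\,2^{j'(\alpha-1)}\,\}$ as the two base points vary; this produces $\|H^{j}(H^{j'})^{*}\|_{2\to2}+\|(H^{j})^{*}H^{j'}\|_{2\to2}\lesssim 2^{-\delta|j-j'|}$ for some $\delta=\delta(\alpha)>0$, uniformly over measurable $u$. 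A local smoothing estimate for averages along the model curve $(t,[t]^{\alpha})$, together with the pointwise bound for such averages, enters here to control the measure of the set of base points that can be near-resonant at two given scales, i.e.\ the size of the off-diagonal $TT^{*}$ kernels. Cotlar--Stein then yields $\|H\|_{2\to2}\lesssim1$, hence \eqref{eqn:HTLpbd} for $p=2$. For $p\neq2$ I would couple this exponential $L^{2}$ decay with the elementary observation that each single-scale piece $\mc{H}_{j}$ of the two-dimensional operator is dominated in modulus by one averaging operator along the curve, hence bounded on every $L^{p}(\R^{2})$, $1\le p\le\infty$, with a constant independent of $j$ and of $u$; organising the pieces according to the phase size $|u(x)|\,2^{j\alpha}|\eta|\sim2^{\ell}$ and interpolating the $L^{2}$-gain in $\ell$ against crude polynomial $L^{p}$ bounds gives summability for $p$ in a neighbourhood of $2$, after which the full range $1<p<\infty$ follows by duality and interpolation (the Littlewood--Paley reduction in $y$, equivalently a weighted $L^{2}$ inequality for $H$, being what transfers the one-dimensional bounds back to $L^{p}(\R^{2})$).

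The step I expect to be the real obstacle is the uniform-in-$u$ control of the $TT^{*}$ compositions: since $u$ is merely measurable it cannot be localised in frequency, so all of the decay must be extracted from the curvature through the stationary-phase analysis of $m_{j}$, and the delicate point --- quantifying how the curved resonant sets $\{\,|\xi|\sim|u(x)|\,2^{j(\alpha-1)}\,\}$ can overlap across different scales and different base points --- is exactly where $\alpha\neq1$ has to be used, and where the local smoothing and pointwise averaging estimates do the heavy lifting.
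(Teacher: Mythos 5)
Your overall architecture matches the paper's up to the point where it matters most: the commutation of $\mc{H}^{(\alpha)}_u$ with Littlewood--Paley projections in $y$, the reduction of the $L^2$ theory to a one-dimensional oscillatory singular integral via a partial Fourier transform in $y$, the $TT^*$/van der Corput analysis exploiting $\phi''\neq 0$ (this is essentially Section \ref{sec:singleann} of the paper, yielding exponential decay $2^{-\gamma l}$ in the phase-size parameter at $L^2$), and the observation that interpolating this $L^2$ gain against crude bounds yields \eqref{eqn:HTLpbd} for $p$ in a neighbourhood of $2$. The gap is in your final step. ``Duality and interpolation'' cannot take you from a neighbourhood of $2$ to all $1<p<\infty$: the adjoint of $\mc{H}^{(\alpha)}_u$ is a variable-coefficient operator of the same type, so duality returns exactly the range you already have, and interpolation never extends beyond known endpoints. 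Moreover, the ``crude $L^p$ bound with constant independent of $j$'' for the single-scale pieces is only true at the scalar level; once the Littlewood--Paley reduction forces you into the vector-valued (square-function) setting for $p\neq 2$, dominating the scale-$j$ piece by the strong maximal function costs a factor $2^{\alpha j}$ (this is \eqref{1702e4.17} in the paper), which is precisely why the interpolation stalls near $p=2$.

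The missing ingredient, and the genuinely new part of the paper's argument, is a pointwise estimate: the average of $\pj_{k} f$ over a thickened segment of the curve $(t,u(x)|t|^{\alpha})$ at scale $j$ is cut into $\sim 2^{\alpha j}$ pieces, each well approximated by a rectangle, and is thereby dominated by an average of compositions $M_1^{(\sigma)}M_2^{(\sigma')}$ of \emph{shifted} maximal operators with shifts $\sigma,\sigma'\lesssim 2^{\alpha j}$ (estimate \eqref{0704e1.19}). Combined with a vector-valued Fefferman--Stein inequality for the shifted maximal operator carrying only a logarithmic loss $(\log\langle n\rangle)^2$ in the shift (Theorem \ref{feff-stein-shifted}, proved via a weighted weak-type estimate), this yields a vector-valued single-scale bound growing only like $j^4$ for \emph{every} $p>1$; interpolating that polynomial growth against the exponential $L^2$ decay gives exponential decay for all $p>1$ and closes the argument. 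Without some substitute for this step your proof does not reach the full range. A minor additional point: local smoothing plays no role in the proof of Theorem \ref{thm:mainHT}; it is the engine behind Theorems \ref{thm:HTsingleannMeas} and \ref{thm:mainmax}, so it should not appear in your $TT^*$ analysis here.
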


\begin{rem}
 We would first like to stress that the analogue of estimate \eqref{eqn:MaxLpbd1} for the Hilbert transform $\mc{H}^{(\alpha)}_{u}$ fails for every $p\in (1,\infty)$ if we only assume $u$ to be measurable. This follows by a straight-forward modification of Karagulyan's \cite{Kara} construction of a counterexample in the case $\alpha=1$. However, if we assume $u$ to be Lipschitz it is possible that the analogue of \eqref{eqn:MaxLpbd2} for $\mc{H}^{(\alpha)}_{u}$ holds. Unfortunately, as of now, we are not able to prove or disprove this.
\end{rem}

\begin{rem}
Notice that unlike the situation described in Theorem \ref{thm:mainmax}, in Theorem \ref{thm:mainHT} we do not require any regularity assumptions on the function $u(x,0)$. Next, we remark that as opposed to \eqref{eqn:MaxLpbd2}, the $\epsilon_0$-truncation is not present in statement \eqref{eqn:HTLpbd}. This is a direct consequence of a standard scaling argument that makes the truncation in $\mc{H}^{(\alpha)}_{u, \epsilon_0}$ from \eqref{eqn:HTdef} become irrelevant. Nevertheless, the one-variable assumption \eqref{eqn:onevarassumpt} should be understood as being strictly stronger than the Lipschitz assumption imposed in Theorem \ref{thm:mainmax}. Indeed, we have the following corollary of Theorem \ref{thm:mainmax}.
\end{rem}

\begin{cor}\label{coro-max-one}
Under the same assumptions as in Theorem \ref{thm:mainHT}, we have
\beq\label{eqn:MaxOneVarLpbd} \|\mc{M}^{(\alpha)}_{u}f\|_p \le C_{p,\alpha} \|f\|_p \endeq
for all $1<p\le \infty$.
\end{cor}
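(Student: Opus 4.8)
Since a function of the form $u(x,y)=u(x,0)$ is in particular measurable, the range $2<p\le\infty$ of \eqref{eqn:MaxOneVarLpbd} is nothing but Theorem \ref{thm:mainmax}(1). So from now on I fix $1<p\le2$, write $v(x):=u(x,0)$ (a merely measurable function of one variable), and aim for \eqref{eqn:MaxOneVarLpbd} in this range.

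\emph{Reduction to a uniform single-scale estimate.} The anisotropic dilations $D_\lambda f(x,y):=f(\lambda x,\lambda^\alpha y)$, $\lambda>0$, are $L^p$ bijections, and since the model curve $t\mapsto(t,[t]^\alpha)$ is homogeneous for them, a change of variables yields
\beq
\mc{M}^{(\alpha)}_{v,\varepsilon_0}(D_\lambda f)(x,y)=\big(\mc{M}^{(\alpha)}_{v(\lambda^{-1}\cdot),\,\lambda\varepsilon_0}f\big)(\lambda x,\lambda^\alpha y),
\endeq
hence $\|\mc{M}^{(\alpha)}_{v,\varepsilon_0}\|_{p\to p}=\|\mc{M}^{(\alpha)}_{v(\lambda^{-1}\cdot),\,\lambda\varepsilon_0}\|_{p\to p}$, with $v(\lambda^{-1}\cdot)$ again measurable and independent of the second variable. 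Consequently, if one establishes the a priori bound $\|\mc{M}^{(\alpha)}_{v,1}f\|_p\le C_{p,\alpha}\|f\|_p$ with $C_{p,\alpha}$ uniform over all such $v$, then taking $\lambda=\varepsilon_0^{-1}$ promotes it to the same bound for every truncation parameter $\varepsilon_0$, and letting $\varepsilon_0\to\infty$ (monotone convergence in the defining supremum) gives \eqref{eqn:MaxOneVarLpbd}. So it suffices to treat the truncation-one operator uniformly in $v$.

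\emph{The single-scale estimate.} Write $\mc{M}^{(\alpha)}_{v,1}f\lesssim\sup_{k\le0}A^v_k f$ with dyadic averages $A^v_k f(x,y)=\int_{\R}|f(x-2^kt,\,y-v(x)\,2^{k\alpha}[t]^\alpha)|\,\psi(t)\,dt$, $\psi$ a nonnegative bump on $|t|\sim1$. Since the operator commutes with $y$-translations, localize in the $y$-frequency $\eta$: the part of $A^v_k$ supported on $|v(x)\,2^{k\alpha}\eta|\lesssim1$ is, after undoing the localization, an average against a kernel with bounded $L^1$ profile adapted to the rectangle $[x-2^k,x+2^k]\times[y-|v(x)|2^{k\alpha},\,y+|v(x)|2^{k\alpha}]$, so the supremum over $k$ of these pieces is dominated by the strong maximal function and hence bounded on $L^p(\R^2)$ for every $p>1$. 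On the complementary part, $|v(x)\,2^{k\alpha}\eta|\gg1$, the condition $\alpha\ne1$ makes the phase $t\mapsto[t]^\alpha$ nondegenerate on $|t|\sim1$, so van der Corput's lemma yields a quantitative decay in $|v(x)\,2^{k\alpha}\eta|$; this is the only point where the curvature hypothesis is used.

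\emph{Main obstacle.} The difficulty is to sum the high-frequency pieces in $L^p$ for $p<2$. At $p=2$ the van der Corput gain combined with Plancherel in $\eta$ makes the sum over the dyadic blocks $k$ converge and the estimate is routine; for $p<2$ one genuinely needs an $L^p$-smoothing (square-function) estimate for the single-scale curve averages $A^v_k$ with a merely measurable coefficient, uniformly in $v$. I would establish this by adapting the (now one-dimensional, hence elementary) oscillatory-integral analysis underlying Theorem \ref{thm:mainmax} to the frequency-localized pieces above; alternatively, when $v$ happens to be Lipschitz this part is already packaged in Theorem \ref{thm:mainmax}(2). One should not expect to reduce the general measurable case to the Lipschitz one by approximation, since the admissible truncation in Theorem \ref{thm:mainmax}(2) collapses as $\|u\|_{\mathrm{Lip}}\to\infty$; the one-dimensional smoothing estimate has to be carried out directly.
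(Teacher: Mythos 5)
There is a genuine gap, and it sits exactly where you flag the ``main obstacle'': for $1<p\le 2$ your proposal never actually proves the required estimate. The reduction to a uniform single-scale bound for a measurable one-variable coefficient $v$ is fine, but the decisive step --- summing the high-frequency pieces in $L^p$ for $p<2$ uniformly in $v$ --- is only announced (``I would establish this by adapting the oscillatory-integral analysis\dots''). That analysis is precisely the content of Section \ref{section-psmall}, it is not routine, and for $p\le 2$ it hinges on the Lipschitz change of variables \eqref{Lipschitzchangevariables} together with an iterated vector-valued interpolation scheme; none of this is reproduced or replaced in your sketch. As written, the proposal is a correct reduction plus a plan, not a proof.

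More importantly, the route you explicitly dismiss is the one the paper takes, and the reason you give for dismissing it is mistaken. You argue that approximation by Lipschitz functions cannot work ``since the admissible truncation in Theorem \ref{thm:mainmax}(2) collapses as $\|u\|_{\mathrm{Lip}}\to\infty$''. But the one-variable hypothesis is exactly what prevents this collapse: if $u=u(x)$ is independent of $y$, then the dilation $y\mapsto\lambda y$ conjugates $\mc{M}^{(\alpha)}_{u,\varepsilon_0}$ into $\mc{M}^{(\alpha)}_{u/\lambda,\varepsilon_0}$ with the \emph{same} truncation $\varepsilon_0$ (the truncation constrains the $t$-variable, which this dilation does not touch), it preserves $L^p$ operator norms, and it divides the full Lipschitz norm by $\lambda$ precisely because $\partial_y u=0$. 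Hence every Lipschitz one-variable $u_n$ may be renormalized to have $\|u_n\|_{\mathrm{Lip}}\le 1$, so Theorem \ref{thm:mainmax}(2) yields \eqref{p1e1.7} for $\mc{M}^{(\alpha)}_{u_n,\varepsilon_0}$ with a fixed $\varepsilon_0$ and a constant independent of $n$. Approximating the measurable one-variable $u$ pointwise by such $u_n$ and passing to the limit (Fatou, for compactly supported smooth $f$, after which scaling and density remove the truncation) gives the corollary; this is the paper's argument. If you insist on the direct route, the observation that would make it viable is of the same nature: for one-variable $v$ the shear $y\mapsto y-v(x)[t]^\alpha$ has Jacobian identically $1$, so the Lipschitz hypothesis of Section \ref{section-psmall} becomes vacuous --- but you would then still have to carry out that section's entire argument, which your proposal does not do.
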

The proof of Corollary \ref{coro-max-one} is via an anisotropic scaling argument which we sketch presently. First of all, by the scaling $x\to \lambda x, y\to \lambda^{\alpha} y,$ and a density argument, it suffices to show that
\beq\label{p1e1.7}
\|\mc{M}^{(\alpha)}_{u, \epsilon_0}f\|_p \le C_{p,\alpha} \|f\|_p,
\endeq
for all compactly supported smooth functions $f$. Here $\epsilon_0$ is the same as in Theorem \ref{thm:mainmax}. Now we approximate the chosen measurable function $u$ pointwisely by a sequence of Lipschitz functions $\{u_n\}_{n\in \N}$ satisfying $u_n(x,y)=u_n(x,0)$ for every $(x,y)\in\R^2$ and whose Lipschitz norms might grow to infinity. By changing variables $x\to x, y\to \lambda y$ in the $L^p$ integration on the left hand side of \eqref{eqn:MaxLpbd2}, we obtain \eqref{p1e1.7} with $u$ replaced by $u_n$, and with a constant independent of $n\in \N$. In the end, we apply the dominated convergence theorem to conclude \eqref{p1e1.7}.\\

The curvature condition is fundamental in the proofs of both Theorem \ref{thm:mainmax} and Theorem \ref{thm:mainHT}. Our approach relies on stationary phase methods, $TT^{*}-$arguments, local smoothing estimates and square function estimates. We speculate that the case $\alpha=1$ will require a combination of time-frequency techniques and methods presented in the current paper, but this subject remains open for future investigation.

In the following we present the historical evolution of the subject that motivated our interest for this study. We then continue with a discussion of our main results, embedding them in the historical context and also state some further results.

\subsection{Historical Background}

The historical landmark that generated much of the literature discussed below is the so-called Zygmund conjecture. This long-standing open problem  asks whether Lipschitz regularity of $u$ suffices to guarantee any non-trivial $L^p$ bounds for the maximal operator
\beq
\mc{M}_{u,\varepsilon_0} f(x,y) = \sup_{0<\varepsilon<\varepsilon_0} \frac1{2\varepsilon} \int_{-\varepsilon}^\varepsilon |f(x-t,y-u(x,y)t)| dt,
\endeq
provided $\varepsilon_0$ is small enough depending on $\|u\|_{\mathrm{Lip}}$. A counterexample based on a construction of the Besicovitch-Kakeya set shows that we cannot expect any $L^p$ bounds other than the trivial $L^\infty$ bound if $u$ is only assumed to be H\"older continuous with some exponent strictly smaller than one. The analogous question for $\mc{H}_{u,\varepsilon_0}$ is also widely open. For a detailed discussion of these conjectures, the interested reader is invited to consult Lacey and Li \cite{LL2}.\\

Notice that these two fundamental problems address the boundedness properties of \eqref{eqn:Maxdef} and \eqref{eqn:HTdef} along $\Gamma_{u}^{\alpha}$ in the following context:
\begin{itemize}
\item \textit{no regularity} above Lipschitz class is assumed;

\item \textit{no curvature} in the $t$ parameter is present, \textit{i.e.} $\alpha=1$, and thus these objects have rich classes of symmetries.
\end{itemize}

Partial progress towards understanding the above open problems developed relatively slowly in the past three decades; in direct relation with the itemization above it revolved around the nature of regularity and/or suitable curvature conditions imposed on the vector field $u$.

Bourgain \cite{Bo} proved that for every real analytic function $u$ there exists $\epsilon_0>0$ such that the associated maximal operator $\mc{M}_{u, \epsilon_0}$ is bounded on $L^2$. His argument can be extended to $L^p$ for all $p>1$ by using a suitable interpolation argument. For smooth vector fields, Christ, Nagel, Stein and Wainger \cite{CNSW} proved, under some extra curvature conditions, that the associated maximal operator and singular integral operators are bounded on $L^p$ for $p>1$.
The analogous result to that of Bourgain for singular integral operators was proved by Stein and Street \cite{SS}. Indeed, the result in \cite{SS} is far more general: in addition to the case of curves $(t, u\cdot t^{\alpha})$ with $u$ analytic and $\alpha\in \N$, they, in fact, consider all polynomials with analytic coefficients. \\

The first major breakthrough in terms of \textit{regularity} came when Lacey and Li \cite{LL1} brought tools from time-frequency analysis into the problem of Hilbert transforms along vector fields. To state these results, we introduce some notation.

Let $\psi_0:\R\to \R$ be a non-negative smooth function supported on the set $[-2, -1/2]\cup [1/2, 2]$ such that for all $t\not=0$,
\beq\label{cut-off-f}
\sum_{l\in \Z}\psi_l(t)=1\,,
\endeq
where $\psi_l(t):=\psi_0(2^{-l}t)$. For every $k\in \Z$, let $\pj_{k}$ denote the Littlewood-Paley projection in the $y$-variable corresponding to $\psi_k$. That is,
\beq
\pj_{k} f(x, y) := \int_{\R} f(x, y-\eta)\check{\psi}_k(\eta)d\eta.
\endeq
Similarly, we define $\pi_{k}$. Now we are ready to state the main result of Lacey and Li.

\begin{thm}[\cite{LL1}]\label{singleannulusline}
Let $u: \R^2\to \R$ be an arbitrary measurable function. For every $p\geq 2$ there exists $0<C_p<\infty$ such that the following hold:
\begin{itemize}
\item For all $k\in\Z$ we have
\beq\label{eqn:BTsingannL21}
\|\mc{H}_{u} \pj_{k} f\|_{2,\infty} \le C_2 \|\pj_{k} f\|_2.
\endeq
\item For all $p>2$ and $k\in\Z$ we have
\beq\label{eqn:BTsingannLp1}
\|\mc{H}_{u} \pj_{k} f\|_{p} \le C_p \|\pj_{k} f\|_p.
\endeq
\end{itemize}
\end{thm}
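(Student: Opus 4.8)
The plan is to follow the time--frequency (phase--plane) scheme of Lacey and Li: reduce to a single frequency annulus, pass to a model sum over wave packets adapted to the vector field, and then run the usual size/density organisation of the tiles.

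First I would exploit the anisotropic dilation symmetry. The operator $\mc{H}_u$ intertwines $(x,y)\mapsto(x,2^{-k}y)$ with the substitution $u\mapsto 2^{-k}u(x,2^{k}\,\cdot\,)$, and this dilation carries $\pj_k$ to $\pj_0$; since the target estimates are dilation invariant and the class of measurable vector fields is preserved, it suffices to bound $\mc{H}_u\pj_0$ uniformly over measurable $u$. On the Fourier side $\mc{H}_u\pj_0 f(x,y)=c\int\hat f(\xi,\eta)\,e^{i(x\xi+y\eta)}\,\psi_0(\eta)\,\mathrm{sgn}(\xi+u(x,y)\eta)\,d\xi\,d\eta$, so the task is to bound a variable-coefficient Fourier multiplier --- the signum of $\xi+u(x,y)\eta$ --- frequency-localised to $|\eta|\sim 1$. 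Splitting the defining singular integral into dyadic pieces $|t|\sim 2^{j}$, the pieces on which the modulation $e^{-itu\eta}$ carries no genuine oscillation (here $|\eta|\sim1$ enters) are disposed of uniformly in $u$ by classical truncated singular integral and Hardy--Littlewood maximal bounds, and after normalising by the dilation symmetry the remaining sum takes the model form $\sum_{P}\langle f,\varphi_P\rangle\,\psi_P\,\mathbbm{1}_{E_P}$, where $P=\omega_P\times R_P$ runs over tiles built from a direction interval $\omega_P$ and a spatial rectangle $R_P$ compatible with the uncertainty principle for the annulus, the $\varphi_P,\psi_P$ are $L^2$-normalised wave packets adapted to $P$, and $E_P=\{(x,y):u(x,y)\in\omega_P\}$.

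Next I would organise the tiles into trees --- maximal families of tiles stacked over a common top tile --- and assign to a tree $T$ its \emph{size} (an $L^2$-normalised measure of the coefficients $\langle f,\varphi_P\rangle$ along $T$) and its \emph{density} (the proportion of $R_T$ covered by the sets $E_P$, $P\in T$). Two estimates carry the argument: (i) a single-tree bound $\big\|\sum_{P\in T}\langle f,\varphi_P\rangle\psi_P\mathbbm{1}_{E_P}\big\|_2\lesssim\mathrm{size}(T)\,\mathrm{density}(T)^{1/2}\,|R_T|^{1/2}$, obtained from spatial and frequency separation within a tree together with a Carleson embedding; and (ii) a counting estimate controlling how many trees of a given size and density can coexist. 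Step (ii) is where the geometry of the vector field enters, and it reduces to a Nikodym/Kakeya-type maximal inequality for the (measurable) vector field $u$ at a fixed scale. For an arbitrary measurable $u$ this maximal operator is bounded on $L^p$ only for $p>2$, with merely a weak-type bound at $p=2$, which is exactly the source of the range $p\ge2$ in the theorem. Since no curvature is available here (we are in the case $\alpha=1$), controlling the overlap of these measurably oriented tubes is the crux of the whole argument, and is the step I expect to be the main obstacle.

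Finally I would run a stopping-time decomposition sorting the tiles into collections of trees with dyadically comparable size and density, bound each collection by combining (i) with the count (ii), and sum the resulting geometric series in the size and density parameters; this yields the weak $(2,2)$ bound \eqref{eqn:BTsingannL21}. Feeding the strong $L^p$ version of the Kakeya maximal estimate (in place of its weak-type counterpart) into the same bookkeeping then gives \eqref{eqn:BTsingannLp1} for every $p>2$. All constants are independent of $k$ by the first step, and independent of $u$ because every estimate used --- in particular the Kakeya bound --- is uniform over measurable vector fields.
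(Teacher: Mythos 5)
A preliminary remark: the paper does not prove Theorem \ref{singleannulusline} at all --- it is quoted as background and attributed to Lacey and Li \cite{LL1} --- so there is no in-paper argument to compare against; your proposal can only be measured against the original.

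Against that benchmark, your outline names the right architecture (anisotropic rescaling to $k=0$, a model sum over tiles $P=\omega_P\times R_P$ with $E_P=\{u\in\omega_P\}$, trees, size and density, a single-tree estimate, a counting estimate, a stopping-time summation), but it does not constitute a proof, and one of its load-bearing claims is wrong. The two estimates you say ``carry the argument'' are exactly where the entire content of the theorem lives, and neither is established: the single-tree bound requires building the wave packets and verifying almost-orthogonality in the presence of the indicators $\mathbbm{1}_{E_P}$, and the tree-counting is the heart of \cite{LL1}. More concretely, your proposed source for the counting estimate --- a Nikodym/Kakeya-type maximal inequality for an arbitrary measurable direction field at a fixed scale, asserted to be weak $(2,2)$ and strong $(p,p)$ for $p>2$ --- fails at the endpoint: for arbitrary measurable direction fields the maximal operator over $L\times 1$ tubes is not weak-type $(2,2)$ uniformly in the eccentricity $L$ (Besicovitch-set constructions force at least logarithmic growth of even the weak-type norm), and since $\mc{H}_u\pj_0$ involves all scales $|t|\gtrsim 1$, all eccentricities occur. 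Hence \eqref{eqn:BTsingannL21} cannot be deduced the way you propose. In \cite{LL1} the weak $L^2$ endpoint instead emerges from $L^2$ orthogonality and Bessel-type inequalities inside the size/density bookkeeping, with measurability of $u$ entering only through the disjointness of the sets $\{u\in\omega\}$ over disjoint direction intervals $\omega$; a genuine directional maximal-function input (bounded only for $p>2$) is what restricts the strong-type range. You flag the tube-overlap step as ``the main obstacle,'' and that self-assessment is accurate: that step is the theorem, and the shortcut you propose for it does not work.
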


A few years later,  by further developing Lacey and Li's methods in \cite{LL1} and \cite{LL2},  Bateman \cite{Bateman} and  Bateman and Thiele \cite{BT} proved the following result.
\begin{thm}[\cite{Bateman}, \cite{BT}]\label{thm:BT}
Let $u: \R^2\to \R$ be a measurable function satisfying
\beq\label{eqn:onevarassumpt0}
u(x, y)=u(x,0)\:\:\:\:\:\textrm{a.e.}\:\:\:\:x, y\in \R.
\endeq
Then for every $1<p<\infty$ there exists $0<C_p<\infty$ such that
\beq\label{eqn:BTsingannLp}
\|\mc{H}_{u} \pj_{k} f\|_p \le C_p \|\pj_{k} f\|_p
\endeq
uniformly in $k\in \Z$. Moreover,  for all $p>3/2$, we have
\beq\label{eqn:BTLp}
\|\mc{H}_{u} f\|_p \le C_p \|f\|_p\:.
\endeq
\end{thm}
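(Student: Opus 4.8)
The plan is to deduce \eqref{eqn:BTLp} from the single-annulus estimate \eqref{eqn:BTsingannLp} by a Littlewood--Paley argument, and to prove \eqref{eqn:BTsingannLp} itself by time--frequency analysis. The starting point, and the first place the one-variable hypothesis \eqref{eqn:onevarassumpt0} enters, is the observation that $\mc{H}_{u}$ does not mix $y$-frequencies: writing $\widehat{g}$ for the partial Fourier transform in the $y$ variable, one has
\beq
\widehat{\mc{H}_{u} g}(x,\eta) = \mathrm{p.v.}\int_{\R} e^{-i\eta u(x) t}\,\widehat{g}(x-t,\eta)\,\frac{dt}{t},
\endeq
so $\mc{H}_{u}$ commutes with each projection $\pj_{k}$. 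Granting \eqref{eqn:BTsingannLp}, the functions $\mc{H}_{u}\pj_{k} f$ are $y$-frequency localized to the annulus $\{|\eta|\sim 2^{k}\}$, so by the Littlewood--Paley inequality $\|\mc{H}_{u} f\|_{p}\lesssim \|(\sum_{k}|\mc{H}_{u}\pj_{k} f|^{2})^{1/2}\|_{p}$, and it suffices to prove the vector-valued bound
\beq\label{eqn:proposalVV}
\Bigl\|\bigl(\mlimits_{k}|\mc{H}_{u}\pj_{k} f|^{2}\bigr)^{1/2}\Bigr\|_{p}\lesssim \Bigl\|\bigl(\mlimits_{k}|\pj_{k} f|^{2}\bigr)^{1/2}\Bigr\|_{p}\approx\|f\|_{p}.
\endeq
At $p=2$ this is immediate from \eqref{eqn:BTsingannLp} and Fubini; for other $p$ I would re-run the proof of \eqref{eqn:BTsingannLp} for $\ell^{2}$-valued functions and interpolate with $p=2$, and the range in which this vector-valued single-annulus estimate can be established is precisely what forces the restriction $p>3/2$.

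The heart of the matter is then \eqref{eqn:BTsingannLp}. First I would use the formula above together with a rescaling in the $y$ variable --- which replaces $u$ by $2^{k}u$, still an arbitrary measurable one-variable function, and turns the annulus $\{|\eta|\sim 2^{k}\}$ into $\{|\eta|\sim 1\}$ --- to reduce to $k=0$. Next I would split the Hilbert transform dyadically in $t$, $\mc{H}_{u}=\sum_{j}T_{j}$ with $T_{j}$ integrating over $\{|t|\sim 2^{j}\}$, and decompose further in the $x$-frequency; the phase $e^{-i\eta u(x) t}$ is non-oscillatory only when the $x$-frequency is comparable to $u(x)$, which pairs each point with the slope $u(x)$ and organizes the pieces into \emph{trees} in the time--frequency plane, indexed along the graph of $u$, exactly as in Lacey--Li \cite{LL1}. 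The resulting model sum would then be estimated by a size/mass (Bessel-plus-Carleson) argument whose decisive ingredient is an $L^{p}$ bound for the Kakeya-type maximal operator adapted to $u$ --- the ``Lipschitz--Kakeya'' maximal function of Bateman \cite{Bateman}: for one-variable $u$ the relevant families of parallelograms have bounded overlap, whereas for general $u=u(x,y)$ a Besicovitch set construction destroys this and the whole scheme collapses.

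The hard part will be exactly this last geometric input: controlling the overlap of the trees, equivalently developing the $L^{p}$ theory of the maximal operator associated to $u$. This is where the one-variable hypothesis \eqref{eqn:onevarassumpt0} is genuinely used, and it is also the efficiency of this step that dictates the exponent $3/2$ in the passage from a single annulus to the full operator in \eqref{eqn:proposalVV}. By contrast, the Littlewood--Paley reduction and the dyadic and time--frequency bookkeeping should be routine. I do not expect $3/2$ to be the truth here, but improving it seems to require a genuinely different way of summing over the annuli $k$.
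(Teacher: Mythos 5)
A point of order first: Theorem \ref{thm:BT} is not proved in this paper. It is quoted from Bateman \cite{Bateman} (the single-annulus bound \eqref{eqn:BTsingannLp}) and Bateman--Thiele \cite{BT} (the full bound \eqref{eqn:BTLp}), and appears only as historical context for the curved results the paper actually establishes. So there is no internal proof to compare your attempt against; I can only measure it against the cited arguments.

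Against that benchmark, your outline does identify the correct architecture: the one-variable hypothesis \eqref{eqn:onevarassumpt0} yields the commutation relation \eqref{BTcommutation}, Littlewood--Paley theory reduces \eqref{eqn:BTLp} to a square-function estimate, and \eqref{eqn:BTsingannLp} is proved by the Lacey--Li/Bateman time-frequency method, whose geometric engine is the $L^p$ theory of the Lipschitz--Kakeya maximal operator attached to $u$. But as a proof this is a roadmap with every load-bearing step left blank: the tree decomposition, the size/mass estimates, the covering lemma for families of parallelograms, and the vector-valued argument are all named rather than executed, and each is a substantial portion of \cite{Bateman} or \cite{BT}. Two specific corrections. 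First, your account of where $3/2$ comes from is off: in \cite{BT} the restriction does not arise from ``re-running the single-annulus proof for $\ell^2$-valued functions and interpolating''; it arises in estimating the square function $\bigl(\sum_k |\mc{H}_u\pj_{k} f|^2\bigr)^{1/2}$, where the off-diagonal pieces of $\mc{H}_u\pj_{k}$ are handled by a covering argument and the main term is controlled via the variation-norm Carleson theorem of \cite{OSTTW12}; as the present authors note, the natural limit of that method is $p>4/3$, the obstruction being the exponents in the variation-norm Carleson theorem rather than a vector-valued single-annulus bound. Second, be aware that the entire point of the present paper is to bypass this time-frequency machinery in the curved case $\alpha\neq 1$ (via $TT^*$, stationary phase, local smoothing, and shifted maximal functions); your sketch concerns the flat case $\alpha=1$, which the authors deliberately do not reprove and which they describe as requiring tools beyond those developed here.
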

An earlier result on maximal operators and Hilbert transforms along one-variable vector fields can be found in Carbery, Seeger, Wainger and Wright \cite{CSWW}. An especially interesting aspect of that work is that they gave an endpoint result on the product Hardy space $H^1_{\text{prod}}(\R\times \R)$ under certain curvature assumption on the function $u$.\\

To pass from \eqref{eqn:BTsingannLp} to \eqref{eqn:BTLp}, Bateman and Thiele \cite{BT} relied crucially on the commutation relation
\beq\label{BTcommutation}
\mc{H}_{u} \pj_{k}=\pj_{k}  \mc{H}_{u}.
\endeq
Unfortunately, this relation fails for the maximal operator $\mc{M}_{u}$ as it is only a sub-linear operator. For this reason the following problem still remains open.
\begin{openproblem}\label{open1}
Let $u:\R^2\to \R$ be a measurable function satisfying \eqref{eqn:onevarassumpt0}. Is $\mc{M}_{u}$ bounded on $L^p(\R^2)$ for some $p<\infty$?
\end{openproblem}

This open problem served as our original motivation for the aim formulated at the beginning of our current paper. Further motivation for our study is provided by the rich literature addressing the boundedness of maximal and singular Radon transforms, focusing on curvature conditions. This corresponds to the case of $\Gamma_{u}^{\alpha}$ when $\alpha\not=1$.

Apart from the work \cite{MR} that has been mentioned already, one more relevant result from this body of literature is due to Seeger and Wainger \cite{SW03}. In this paper the variable curve $(t, u(x, y)\cdot [t]^{\alpha})_{t\in \R}$ appears as a special case of the more general curve $\Gamma(x, y, t)$ which satisfies some convexity and doubling hypothesis uniformly in $(x, y)$. For such curves $\Gamma(x, y, t)$, the authors proved that the associated maximal operator and singular integral operators are bounded on $L^p$ for $p>1$.

For more results in the same spirit, we refer to Nagel, Stein and Wainger \cite{NSW78}, Seeger \cite{See94} and the references therein.

\subsection{Comments on the main results}

We will start our discussion with Theorem \ref{thm:mainmax}.

As mentioned earlier, item (1) is the restatement of the result of Marletta and Ricci in \cite{MR}. To prove this result they used Bourgain's result on the circular maximal operator \cite{Bou86} as a black-box. In contrast, we will provide an alternative approach that is more self-contained. In a certain sense we are unraveling the mechanism behind the boundedness of the circular maximal operator, by using the local smoothing estimates of Mockenhaupt, Seeger and Sogge \cite{MSS92}  and the $l^2L^p$ decoupling inequalities for cones of Wolff \cite{Wolff00}, Bourgain \cite{Bou13} and Bourgain and Demeter \cite{BD15}. We do not claim any originality in this approach: that local smoothing estimates can be used to prove the boundedness of the circular maximal operator has already been pointed out in \cite{MSS92}. Moreover, the observation that decoupling inequalities for cones can provide certain progress toward the local smoothing conjecture is due to Wolff \cite{Wolff00}.

The next comment regards both items (1) and (2): recall that one of the main obstacles in the analysis of the maximal operator is that the analogue of the commutation relation \eqref{BTcommutation} fails due to sublinearity (see also \eqref{0103e1.18} below). Thus, in order to prove estimates \eqref{eqn:MaxLpbd1} and \eqref{eqn:MaxLpbd2} our strategy is to work with all frequency annuli at the same time and take advantage of the non-trivial curvature provided by $\Gamma_{u}^{\alpha}(t)=(t, u(x, 0)[t]^{\alpha})$ in the situation $\alpha\neq 1$.

The method of proving Theorem \ref{thm:mainmax} in the absence of the analogue of the commutation relation \eqref{BTcommutation} might also provide some insight toward Open Problem \ref{open1}.\\

We now focus our discussion on Theorem \ref{thm:mainHT}. This result can be regarded as the ``curved" analogue of \eqref{eqn:BTLp} from Bateman and Thiele's Theorem \ref{thm:BT}. In fact, in the next subsection we will state another result that includes \textit{the single annulus} version of both Theorem \ref{thm:mainHT} and Theorem \ref{thm:BT}, corresponding to \eqref{eqn:BTsingannLp} (see Theorem \ref{1602thm1.3} below). Moreover, in a forthcoming paper of the first, third and fourth author we will be relying in part on the ideas developed by the third author in \cite{Lie3}, \cite{Lie4} in order to extend Theorem \ref{thm:mainHT} to the setting of general curves (not necessarily homogeneous) obeying some suitable smoothness and curvature conditions.

Regarding the proof of Theorem \ref{thm:mainHT}, we rely on several ingredients. Following the general scheme in \cite{BT}, we first prove a single annulus estimate
\beq\label{1004e1.17}
\|\mc{H}^{(\alpha)}_{u} \pj_{k} f\|_p \lesim \|\pj_{k} f\|_p
\endeq
for all $p>1$ and then we use the commutation relation
\beq\label{0103e1.18}
\mc{H}^{(\alpha)}_{u} \pj_{k}=\pj_{k}  \mc{H}^{(\alpha)}_{u}
\endeq
to pass to a square function estimate. However, it is worth stressing here that the methods through which we achieve \eqref{1004e1.17} and then \eqref{eqn:HTLpbd} are quite different from the ones in \cite{BT}: there the authors use time-frequency techniques while in our case we rely on almost-orthogonality, stationary-phase and $T T^{*}$ methods derived from the presence of curvature.

This difference is also reflected in estimate \eqref{eqn:HTLpbd} from Theorem \ref{thm:mainHT} where we have an improved $L^p$ range including bounds for all $p$ close to 1. In contrast with this, the potential range for estimate \eqref{eqn:BTLp} in Theorem \ref{thm:BT} to hold is $p>4/3$. This exponent is related to the exponents in the variation norm Carleson theorem \cite{OSTTW12}. We refer to Bateman and Thiele \cite{BT} for a more detailed discussion.

In order to achieve the $L^p$ bounds for all $p>1$ we develop a pointwise estimate for taking averages along variable curves, via the shifted (strong) Hardy-Littlewood maximal function\footnote{See \eqref{0704e1.19} below.}. This pointwise estimate has a natural geometric interpretation: roughly speaking, it says that the averages along a thickened segment\footnote{That is, a small neighbourhood of a segment.} of the curve $(t, u\cdot |t|^{\alpha})$ can be pointwisely controlled, up to a small logarithmic loss, by a sum of averages taken over a number of rectangles. We remark that in the case $u\equiv 1$, our proof of Theorem \ref{thm:mainHT} reduces to an alternative proof for the $L^p$ boundedness of the classical singular Radon transform along the curve $(t,[t]^\alpha)$ for $p\not=2$, which does not seem to have appeared in the literature.\\

\subsection{Further results}
Here we present two more results.

As already mentioned above, the first result encompasses the single annulus estimates corresponding to Theorem \ref{thm:mainHT} and Theorem \ref{thm:BT}.

\begin{thm}\label{1602thm1.3}
Let $u: \R^2\to \R^2$ be a measurable function with $u(x, y)=(u_1(x, y), u_2(x, y))$. We then define the Hilbert transform along the variable polynomial curve
 $$(t, u_1(x, y)t+u_2(x, y)[t]^{\alpha})_{t\in \R}$$
by
\beq
\mc{H}^{(\alpha)}_{u} f(x ,y)=\emph{p.v.}\int_{\R}f(x-t, y-u_1(x, y)t-u_2(x, y)[t]^{\alpha})\frac{dt}{t}.
\endeq
Suppose now that
\beq
u(x, y)=u(x, 0)\:\:\:\:\:\:\textrm{a.e.}\:\:\:x, y\in \R\:.
\endeq
Then, for each $\alpha>0$ with $\alpha\neq 2$ and each $p>1$, there exists $C_{\alpha, p}>0$ such that
\beq\label{1702e1.6}
\|\mc{H}^{(\alpha)}_{u} \pj_{k} f\|_p \le C_{\alpha, p} \|\pj_{k} f\|_p,
\endeq
uniformly in $k\in \Z$, where here we recall that $\pj_{k}$ stands for the Littlewood-Paley projection operator in the $y$-variable.
\end{thm}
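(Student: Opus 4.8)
The plan is to reduce the single annulus estimate \eqref{1702e1.6} to an $L^p$ bound for a linearized operator and then exploit the curvature coming from the $[t]^\alpha$ term. Since $u$ depends only on $x$ and the Littlewood-Paley projection $\pj_{k}$ acts in the $y$-variable, the operator $\mc{H}^{(\alpha)}_{u}\pj_{k}$ commutes with $\pj_{k}$, so after rescaling in $y$ we may assume the frequency is localized to $|\eta|\sim 1$; by the anisotropic scaling $(x,y)\mapsto(\lambda x,\lambda^\alpha y)$ together with a rescaling in the $u_1$-direction, it further suffices to treat a dyadic piece of the kernel, say $t\sim 1$, with a constant uniform in the remaining parameters. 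Thus I would first dispose of the linear term $u_1(x)t$: since the output is frequency-localized in $y$, the factor $e^{-i\eta u_1(x) t}$ with $|\eta t|\lesssim 1$ is harmless (it is an $x$-dependent modulation of bounded variation on the relevant $t$-scale), which is precisely why the range $\alpha\neq 2$ is needed — when $\alpha=2$ the quadratic term $u_2[t]^2$ can be absorbed into the linear term after completing the square and the curvature degenerates.

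The heart of the matter is then the operator with kernel $K_k(x,t) = e^{-i\eta u_2(x)[t]^\alpha}\, t^{-1}\psi_0(t)$ integrated against $f$, and here I would run a $TT^*$ argument. Writing $T_k f(x,y) = \int f(x-t, y - u_2(x)[t]^\alpha) t^{-1}\psi_0(t)\,dt$ composed with $\pj_{k}$, the kernel of $T_k T_k^*$ involves an oscillatory integral in $t$ (or in $(s,t)$) whose phase is $\eta(u_2(x)[t]^\alpha - u_2(x')[s]^\alpha)$, and the curvature hypothesis $\alpha\neq 1$ guarantees that the phase has a non-degenerate stationary point structure in the relevant variables. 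Stationary phase then yields decay in the separation $|x-x'|$ (measured against $2^k$), giving an almost-orthogonality estimate that proves the $L^2$ bound \eqref{1702e1.6} for $p=2$ uniformly in $k$. The key technical point is to track how the stationary phase decay interacts with the support of $\pj_{k}f$ and to sum the resulting pieces; this is the step I expect to be the main obstacle, since it requires the oscillatory estimate to be robust under the presence of the (unrestricted, merely measurable) function $u_2(x)$ and to survive the passage through all the dyadic scales in $t$.

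To reach all $p>1$ rather than just $p=2$, I would interpolate the $L^2$ estimate with an $L^p$ bound obtained by a more elementary argument: on the one hand, a trivial $L^\infty$-type or weak-$(1,1)$-type bound for the single-scale piece (controlled by a shifted Hardy-Littlewood maximal function along the curve, as alluded to in the introduction), and on the other hand, Littlewood-Paley theory in the $x$-variable to handle the summation in $k$. More precisely, one can decompose in the $x$-frequency and use that for each fixed $x$-frequency annulus the operator behaves like a classical singular Radon transform along the fixed curve $(t,[t]^\alpha)$ after rescaling, for which the $L^p$ theory for all $p>1$ is classical; the variable coefficient only shifts the relevant frequency support, and the shift is controlled. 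Combining the $L^2$ gain from curvature with the $L^p$ bounds away from $p=2$ via interpolation, and then summing over dyadic scales using the stationary phase decay, yields \eqref{1702e1.6} for every $p>1$, uniformly in $k\in\Z$.
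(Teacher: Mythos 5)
There is a genuine gap, and it sits at the very first step of your argument: the claim that the linear term $u_1(x)t$ can be disposed of because $e^{-i\eta u_1(x)t}$ is ``an $x$-dependent modulation of bounded variation on the relevant $t$-scale.'' The function $u_1$ is merely measurable with no size control, so even with $|\eta|\sim 1$ and $t\sim 1$ this modulation has total variation $\sim |u_1(x)|$ in $t$, which is unbounded. If the linear term could be removed this cheaply, the case $u_2\equiv 0$ would give a one-line proof of Bateman's single-annulus theorem for one-variable vector fields, which is a deep time-frequency result. The paper's proof keeps the linear term throughout and splits the kernel at the scale $|t|\sim u_2(x)^{-1/\alpha}$: on the low-frequency piece it compares the operator with the (maximally truncated) Hilbert transform along the one-variable field $(t,u_1(x)t)$ and invokes Bateman's theorem as a black box, the difference being dominated by the maximal function; on the high-frequency piece it runs a $TT^*$ argument in which the linear coefficient survives into the phase as an uncontrolled parameter $w$ and is eliminated only by differentiating the phase twice. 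Your proposal contains neither ingredient, and your $TT^*$ phase, which has already dropped $u_1$, is not the one that actually has to be estimated.

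Relatedly, your explanation of the hypothesis $\alpha\neq 2$ (completing the square ``absorbs'' the quadratic term into the linear one) is not the mechanism. In the genuine $TT^*$ kernel the phase is $Q(\eta)=w\eta+2^{\alpha l}\bigl(\eta^\alpha-(h\eta-\xi)^\alpha\bigr)$ with $w$ arbitrary; since $Q'$ is contaminated by $w$ one must show that $Q''$ and $Q'''$ cannot both be small, and the determinant of the relevant $2\times 2$ matrix is proportional to $(\alpha-2)\xi$ --- equivalently, for $\alpha=2$ the operator carries a quadratic modulation symmetry and the kernel of $TT^*$ does not decay (this case is the quadratic Carleson operator). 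The condition $\alpha\neq 1$ that you cite enters elsewhere. Finally, for $p\neq 2$ the single-annulus estimate does not need the shifted-maximal-function machinery you invoke (that belongs to the square-function step of the full Theorem on $\mc{H}^{(\alpha)}_u$): one interpolates the $L^2$ bound with exponential decay $2^{-\gamma l}$ against the trivial $L^p$ bound without decay, obtained from Minkowski's inequality and the Hardy--Littlewood maximal function, and then sums in $l$.
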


Notice that, by applying a partial Fourier transform in the $y$-variable and using Plancherel, our theorem implies the following.
\begin{cor}\label{poly-carleson}
For each $\alpha>0$ with $\alpha\neq 2$, we have
\beq\label{eqn:mockpolcarleson}
\left\|\sup_{u_1, u_2\in \R}\left| \emph{p.v.}\int_{\R}f(x-t)e^{iu_1 t+iu_2 [t]^{\alpha}}\frac{dt}{t} \right|\right\|_2 \le C_{\alpha} \|f\|_2,
\endeq
with a constant $C_{\alpha}$ depending only on $\alpha$.
\end{cor}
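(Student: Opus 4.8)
The plan is to derive Corollary~\ref{poly-carleson} directly from Theorem~\ref{1602thm1.3} by freezing the frequency in the $y$-variable. First I would take $f\in L^2(\R)$ and, for a parameter $\xi\in\R$, consider the two-dimensional function $F_\xi(x,y):=f(x)e^{iy\xi}\chi(y)$ for a suitable bump $\chi$; but more cleanly, one should argue via duality and the partial Fourier transform. Write $g(x,y)\in L^2(\R^2)$ with partial Fourier transform $\widehat{g}^{\,(2)}(x,\eta)$ in the second variable, and observe that for a Littlewood--Paley piece at frequency $\sim 2^k$ the operator $\mc{H}^{(\alpha)}_{u}\pj_k$ acts, after conjugating by the partial Fourier transform in $y$, as a multiplier
\beq
\widehat{\left(\mc{H}^{(\alpha)}_{u}\pj_k g\right)}^{\,(2)}(x,\eta)
= \psi_k(\eta)\,\mathrm{p.v.}\int_{\R} \widehat{g}^{\,(2)}(x-t,\eta)\,e^{-i\eta u_1(x)t - i\eta u_2(x)[t]^{\alpha}}\,\frac{dt}{t},
\endeq
using the one-variable assumption $u(x,y)=u(x,0)=(u_1(x),u_2(x))$. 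Thus for each fixed $\eta$ the inner operator is exactly the oscillatory singular integral appearing in \eqref{eqn:mockpolcarleson} with $u_1\rightsquigarrow \eta u_1(x)$, $u_2\rightsquigarrow \eta u_2(x)$ acting in the $x$-variable alone.

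Next I would run the standard linearisation of the supremum: the operator on the left of \eqref{eqn:mockpolcarleson} is, by a measurable-selection argument, bounded by the operator norm of $f\mapsto \mathrm{p.v.}\int f(x-t)e^{iu_1(x)t+iu_2(x)[t]^\alpha}\tfrac{dt}{t}$ maximised over \emph{all} measurable functions $u_1,u_2:\R\to\R$ (the sup over points becomes a sup over choices of measurable coefficient functions). It therefore suffices to prove the uniform bound
\beq
\left\| \mathrm{p.v.}\int_{\R} f(x-t)\,e^{iu_1(x)t + iu_2(x)[t]^{\alpha}}\,\frac{dt}{t} \right\|_{L^2(\R)} \le C_\alpha \|f\|_{L^2(\R)}
\endeq
with $C_\alpha$ independent of the measurable choice of $u_1,u_2$. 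To get this from \eqref{1702e1.6}, I would reverse the computation above: given measurable $u_1,u_2$ on $\R$, pick a single frequency scale, say $k=0$, set $\tilde u_j(x,y):=u_j(x)$ (which satisfies the one-variable hypothesis of Theorem~\ref{1602thm1.3}), and choose a test function of the form $g(x,y)=f(x)\,\check{\psi}_0(y-y_0)$ or, better, apply \eqref{1702e1.6} to a function whose partial Fourier transform in $y$ is supported where $\eta\approx 1$ and equals $f(x)$ there. Since $\psi_0$ does not vanish on, say, $|\eta|\in[1,3/2]$, testing \eqref{1702e1.6} against such $g$ and using Plancherel in $y$ recovers the one-dimensional oscillatory estimate with coefficients $\eta u_j(x)$ for $\eta$ ranging over a fixed interval bounded away from $0$ and $\infty$; an anisotropic rescaling $t\mapsto \lambda t$ in the $x$-integral (which rescales $u_1$ by $\lambda$ and $u_2$ by $\lambda^{\alpha}$) then covers all real values of the coefficients. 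Finally I would average over $y_0$ (or integrate in $\eta$ over the fixed interval) and divide out the resulting harmless constant to conclude \eqref{eqn:mockpolcarleson}.

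The genuinely substantive input is of course Theorem~\ref{1602thm1.3} itself; granting that, the only real points requiring care are: (i) making the linearisation of $\sup_{u_1,u_2}$ rigorous, i.e.\ reducing the pointwise supremum to a supremum over measurable coefficient functions so that Theorem~\ref{1602thm1.3} applies — this is a routine approximation/measurable-selection argument but must be stated; and (ii) verifying that the commutation with the partial Fourier transform in $y$ and the Littlewood--Paley cutoff $\psi_k$ does not destroy the uniformity, which it does not because $\psi_0$ is bounded below on a fixed compact set away from the origin and the bound in \eqref{1702e1.6} is uniform in $k$ (and hence, after scaling, in the coefficient size). The anisotropic scaling exploited here is exactly the one already used in the proof sketch of Corollary~\ref{coro-max-one}. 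I do not anticipate any essential obstacle beyond bookkeeping once Theorem~\ref{1602thm1.3} is in hand; in particular the exclusion $\alpha\neq 2$ is inherited verbatim from that theorem and corresponds to the degenerate case where $t$ and $[t]^{\alpha}$ fail to be linearly independent curves at the relevant scale.
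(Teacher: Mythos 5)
Your route --- partial Fourier transform in the $y$-variable, Plancherel, and linearisation of the supremum --- is exactly the one the paper indicates (its entire proof of Corollary \ref{poly-carleson} is the one sentence preceding the statement), and your conjugation formula and the reduction to a uniform bound over measurable coefficient functions are correct. The step that fails as written is the last one. Testing \eqref{1702e1.6} with $p=2$, $k=0$ against $g$ whose partial Fourier transform in $y$ is $f(x)\phi(\eta)$, with $\phi$ supported where $\psi_0\equiv1$, gives via Plancherel and Fubini only the $\eta$-averaged inequality
\[
\int_{\R}|\phi(\eta)|^{2}\,\Big\|\,\mathrm{p.v.}\!\int_{\R}f(\cdot-t)\,e^{-i\eta u_{1}(\cdot)t-i\eta u_{2}(\cdot)[t]^{\alpha}}\,\frac{dt}{t}\Big\|_{L^{2}(dx)}^{2}\,d\eta\;\lesssim\;\|f\|_{2}^{2}\,\|\phi\|_{2}^{2},
\]
hence the one-dimensional bound for \emph{almost every} $\eta$, with an exceptional null set depending on $f$ and on $(u_{1},u_{2})$. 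Integrating in $\eta$ and ``dividing out the harmless constant'' does not upgrade an averaged bound to a bound at the single value $\eta=1$ needed to recover the linearised operator with your chosen coefficient functions; and the anisotropic rescaling cannot help, since it merely reparametrises coefficient functions that are already arbitrary and cannot remove a null set of bad $\eta$'s (note also that the hypothesis $u(x,y)=u(x,0)$ in Theorem \ref{1602thm1.3} forces the fiber coefficients to depend on $\eta$ linearly, so the discrepancy cannot be absorbed into an $\eta$-dependent choice of $u_{1},u_{2}$). The standard repair must be supplied: for Schwartz $f$ and fixed $x$ the map $(c,d)\mapsto\mathrm{p.v.}\int f(x-t)e^{ict+id[t]^{\alpha}}\frac{dt}{t}$ is continuous (split into $|t|\le1$ and $|t|>1$ and use dominated convergence), so one selects $\eta_{n}\to1$ inside the common full-measure set attached to a countable dense family of Schwartz functions, passes to the limit by Fatou's lemma to obtain the bound at $\eta=1$, and then concludes by density and by monotone convergence for the supremum. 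With that limiting argument inserted your proof closes; without it the deduction is incomplete at its only non-bookkeeping step.
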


Remark that the case $\alpha=2$ is a deep result due to the third author \cite{Lie1} which Corollary \ref{poly-carleson} does not encompass due to the quadratic modulation symmetries present if $\alpha=2$.
The third author also proved bounds for the full polynomial Carleson operator \cite{Lie2},
\beq\label{eqn:polcarleson}
f\mapsto \sup_{P}\left|\textrm{p.v.}\int_\R f(x-t) e^{iP(t)} \frac{dt}t\right|,
\endeq
where the supremum goes over all polynomials $P$ with real coefficients of degree less than a fixed number. The proof uses a sophisticated time-frequency approach.
If $\alpha$ is a positive integer, \eqref{eqn:mockpolcarleson} is of course a corollary of the $L^2$ bounds for \eqref{eqn:polcarleson}. It is interesting however, that we can prove \eqref{eqn:mockpolcarleson} essentially using only Carleson's theorem as a black box.\\

Theorem \ref{1602thm1.3} is about a single annulus estimate. It can be viewed as an extension of Bateman's result \cite{Bateman}.  The proof of this result is a combination of the stationary phase method with an application of Bateman's single annulus estimate \cite{Bateman}.\\

Next, we will state a second single annulus estimate. It is the counterpart of Lacey and Li's result Theorem \ref{singleannulusline}, and will provide a key insight towards proving Theorem \ref{thm:mainmax}. However, it requires a completely different proof compared with Theorem \ref{1602thm1.3}.
\begin{thm}\label{thm:HTsingleannMeas}
Given $\alpha>0$ and $\alpha\neq 1$. For each measurable function $u: \R^2\to \R$, we have
\beq
\|H^{(\alpha)}_u \pj_{k} f\|_p \le C_{p, \alpha} \|\pj_{k} f\|_p,
\endeq
for each $p>2$. Here $C_{p, \alpha}$ does not depend on $k\in \Z$.
\end{thm}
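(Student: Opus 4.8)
The plan is to exploit the curvature $\alpha\neq 1$ via a $TT^*$ argument combined with stationary phase, reducing matters to an $L^2$ estimate that is then upgraded to $L^p$, $p>2$, by interpolation with the trivial $L^\infty$ bound. First I would apply the partial Fourier transform in the $y$-variable; since $\pj_k$ localizes the second frequency variable $\eta$ to $|\eta|\sim 2^k$, we are reduced to studying, for fixed $x$ and fixed $\eta$ with $|\eta|\sim 2^k$, the operator
\beq
T_{\eta}g(x) = \mathrm{p.v.}\int_{\R} g(x-t)\, e^{-i\eta u(x,0)[t]^{\alpha}}\,\frac{dt}{t},
\endeq
acting in the $x$-variable, and we want bounds uniform in the measurable function $u$. (Here I am using the one-variable structure of $\pj_k$ and that $u$ enters only the phase.) Because the maximal variant of this is exactly the mock polynomial Carleson object appearing in Corollary \ref{poly-carleson}, one could in principle invoke that; but the point of Theorem \ref{thm:HTsingleannMeas} is an $L^p$ estimate for $p>2$ without the one-variable assumption on $u$, so instead I would decompose $T_\eta$ dyadically in $t$ and use the curvature directly.

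The key step is the following: write $T_\eta = \sum_{j\in\Z} T_{\eta,j}$ where $T_{\eta,j}$ restricts the $t$-integral to $|t|\sim 2^{j}$. After rescaling $t\mapsto 2^j t$, $x\mapsto 2^j x$, the phase becomes $\eta u(x,0) 2^{j\alpha}[t]^{\alpha}$, so setting $\lambda = \eta u(x,0) 2^{j\alpha}$ (which depends on $x$!) we have a family of oscillatory integral operators with phase $\lambda[t]^\alpha$ over $|t|\sim 1$ and a smooth cutoff, minus the principal value correction near $t=0$ which is harmless after this rescaling. The crucial observation is that $T_\eta\pj_k$ should be thought of as $\pj_k$-frequency-localized; combining the $y$-localization $|\eta|\sim 2^k$ with the dyadic $t$-piece, the relevant regime is $|\lambda|\sim 2^{j\alpha}|u(x,0)|2^k$. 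When $|\lambda|\lesssim 1$ (small $j$, or small $u$), the operator $T_{\eta,j}$ is essentially a smooth truncated singular integral and is handled by standard Calderón–Zygmund theory with a gain $2^{-|j|}$ from the cancellation $\int_{|t|\sim 1}\frac{dt}{t}=0$. When $|\lambda|\gg 1$, stationary phase applies: the phase $\lambda[t]^\alpha$ has non-degenerate critical points (this is where $\alpha\neq 1$ is used — for $\alpha=1$ the phase is linear and there is no stationary point, only the modulation symmetry), so the $TT^*$ kernel of $T_{\eta,j}$ decays like $|\lambda|^{-1/2}\sim 2^{-j\alpha/2}2^{-k/2}$ in the relevant range, together with off-diagonal decay in the spatial variable coming from non-stationary phase. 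Summing these estimates over $j$ and over the dyadic blocks of $|u|$ gives the $L^2$ bound
\beq
\|H^{(\alpha)}_u \pj_{k} f\|_2 \le C_{\alpha} \|\pj_{k} f\|_2
\endeq
uniformly in $k$ and in measurable $u$; the subtlety that $\lambda$ depends on $x$ is absorbed by freezing $u(x,0)$ inside the $TT^*$ computation, since $TT^*$ only pairs the phase at two points $x, x'$ and one exploits either $|u(x,0)|$ or $|u(x',0)|$ being large.

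Having the $L^2$ estimate, the passage to $L^p$ for $p>2$ is by interpolation: the operator $H^{(\alpha)}_u\pj_k$ is trivially bounded on $L^\infty$ with a $\log$-type bound on any finite truncation, but more cleanly one notes that each dyadic piece $\mc H^{(\alpha)}_{u}\pj_k$ restricted to a single scale is bounded on all $L^p$, $1<p<\infty$, with constant $O(1)$ by Calderón–Zygmund theory (for fixed scale the kernel is a nice Radon-type kernel), and then one interpolates the $\ell^2$-summability in $j$ coming from the $L^2$ analysis against these uniform $L^p$ bounds using a vector-valued / analytic-interpolation argument à la Stein–Tomas, losing the endpoint $p=2$ but gaining the full range $p>2$. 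Alternatively — and this is probably the cleanest route — one uses the local smoothing / decoupling machinery already advertised in the introduction: the single-annulus operator is, after the rescaling above, governed by averages along the curve $(t,[t]^\alpha)$ whose Fourier extension lies on a cone, and the $\ell^2 L^p$ decoupling for the cone (\cite{Wolff00}, \cite{Bou13}, \cite{BD15}) yields the required $L^p$ bound for $p>2$ directly, uniformly in the measurable coefficient $u$ because $u$ only translates the pieces in frequency. The main obstacle I anticipate is precisely handling the $x$-dependence of $u(x,0)$ in the $TT^*$/stationary-phase step: one must arrange the kernel estimates so that the decay constants depend only on the \emph{relative} configuration and on $\max(|u(x,0)|,|u(x',0)|)$, and then carefully sum the resulting bilinear form over the dyadic decomposition of both the $t$-scale and the size of $u$ without losing uniformity in $k$. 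A secondary technical point is dealing with the principal value at $t=0$: the contribution of $|t|<2^{-N}$ for large $N$ should be controlled by the Hilbert transform in $x$ (the $\alpha$-power is negligible there after $y$-localization once $2^{-N\alpha}2^k$ is small), and this piece is bounded on every $L^p$, $1<p<\infty$, separately.
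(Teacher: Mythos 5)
Your primary route contains two fatal problems. First, you write the phase as $e^{-i\eta u(x,0)[t]^{\alpha}}$, but Theorem \ref{thm:HTsingleannMeas} makes \emph{no} one-variable assumption: $u$ is an arbitrary measurable function of $(x,y)$. When $u$ genuinely depends on $y$, the partial Fourier transform in $y$ does not diagonalize the operator, and you cannot reduce to a one-dimensional oscillatory integral $T_\eta$ acting in $x$ alone; that reduction (and the ensuing $TT^*$ computation) is precisely the proof of the \emph{other} single-annulus result, Theorem \ref{1602thm1.3}, where the hypothesis $u(x,y)=u(x,0)$ is essential. Second, and more decisively, the uniform $L^2$ bound you propose to establish and then interpolate from is \emph{false} for measurable $u$: as remarked immediately after the statement of the theorem, the estimate fails for every $p\le 2$ (test against $\mathbbm{1}_{B(0,1)}$; a Besicovitch-type choice of $u$ spreads the output over a much larger set). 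No interpolation scheme anchored at $p=2$ can therefore succeed --- the restriction $p>2$ is a genuine feature of the problem, not an artifact of the method.

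The alternative you mention only in passing --- local smoothing / decoupling --- is in fact the paper's actual argument, and it is not interchangeable with the $TT^*$ route. The proof pigeonholes the size of $u_z=u(x,y)$ into dyadic blocks $u_z\in[2^{k_0},2^{k_0+1})$, controls $\mc{H}^{(\alpha)}_u\pj_{0} f$ by the $\ell^p$ (not $\ell^2$) aggregate $\bigl(\sum_{k_0}|T_{k_0}\pj_{0} f|^p\bigr)^{1/p}$, and then decomposes each $T_{k_0}$ both in the $t$-scale and in the $x$-frequency. The engine is Lemma \ref{localsmoothinglemma}: after rescaling, each piece becomes an instance of the averaging operator of Theorem \ref{0813theorem8.1}, and the local smoothing estimate (valid only for $p>2$, with only an $\varepsilon$ of gain) supplies exponential off-diagonal decay jointly in all the parameters, which is what makes the multiple sum --- including the sum over the dyadic sizes $k_0$ of $u$ --- converge. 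To salvage your write-up you would need to discard the $TT^*$/$L^2$ skeleton entirely and develop this second route: isolate the low-frequency pieces controlled by the strong maximal function and the truncated Hilbert transform, and show how the frequency localizations in $x$, in $y$, and in the $t$-scale combine into the single decay parameter of Lemma \ref{localsmoothinglemma}.
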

The range $p>2$ is sharp in the sense that Theorem \ref{thm:HTsingleannMeas} fails for $p\le 2$. This can be seen by testing the estimate against the characteristic function of the unit ball.

The proof relies on the local smoothing estimates by Mockenhaupt, Seeger and Sogge \cite{MSS92}. The local smoothing estimates only work for $p>2$ which is reflected in the constraint $p>2$ in the above theorem. It is worth mentioning that we will not need the full strength of the local smoothing estimates, but only an ``$\varepsilon-$amount'' of them, and only for a single $p>2$. For this reason, we are able to provide a simple and self-contained proof of the local smoothing estimates we need, via decoupling inequalities for cones in $\R^3$. This will be the content of Appendix \ref{AppendixA}. We would like to emphasize again that the approach is due to Wolff \cite{Wolff00}. Moreover, in terms of the decoupling inequalities we use for cones in $\R^3$, we do not need the full range $2\le p\le 6$ in Bourgain and Demeter \cite{BD15}, but only the range $2\le p\le 4$. The decoupling inequalities for $p$ in this range again have a simple proof. For the sake of completeness, we include it here, see Appendix \ref{AppendixB}. This argument is due to Bourgain \cite{Bou13}.\\

{\bf Structure of the paper.}

\begin{itemize}

\item In Section \ref{sec:singleann} we prove Theorem \ref{1602thm1.3}. This is a single annulus version of the estimate in Theorem \ref{thm:mainHT}.

\item In Section \ref{sectionhilbertfull} we prove the full Theorem \ref{thm:mainHT}. The proof will rely on a vector-valued estimate for the shifted maximal operator.

\item In Section \ref{measurable-single} we show Theorem \ref{thm:HTsingleannMeas} whose proof serves as a preparation for the corresponding proof of Theorem \ref{thm:mainmax}.

\item  In Section \ref{section-main-theorem} we provide the proof of Theorem \ref{thm:mainmax}. Theorems \ref{thm:HTsingleannMeas} and \ref{thm:mainmax} rely on the local smoothing estimate in Theorem \ref{0813theorem8.1}.

\item In Appendix \ref{AppendixA} we provide the proof of  Theorem \ref{0813theorem8.1} via decoupling inequalities for cones in $\R^3$, following the approach of Wolff \cite{Wolff00}.

\item In Appendix \ref{AppendixB} we provide a proof of the decoupling inequalities we need, following the approach of Bourgain \cite{Bou13}.\\
\end{itemize}

{\bf Notation.} Throughout this paper, we will write $x\lesim y$ to mean that there exists a constant $C$ such that $x\le C y$. Here, unless otherwise specified, $C$ may only depend on fixed parameters, which will most commonly be $\alpha$ and $p$.
Further we write $x\sim y$ to mean that $x\lesim y$ and $y\lesim x$. Moreover, $x\approx y$ will mean $x\le 2y$ and $y\le 2x$. Lastly, $\mathbbm{1}_E$ will always denote the characteristic function of the set $E$.\\

{\bf Acknowledgements.} The first and fourth authors would like to thank Christoph Thiele for his guidance on the current project. They also thank Brian Street and Po-Lam Yung for useful discussions.\\

\section{A single annulus estimate}\label{sec:singleann}

In this section we prove Theorem \ref{1602thm1.3}. The special case $v\equiv 0$ will later be a key ingredient in the proof of Theorem \ref{thm:mainHT}.

Dropping the dependence on $u,v$ and $\alpha$ in our notation, we now set\footnote{Here and throughout the remainder of this text we will omit the principal value notation.}
\beq
\mc{H}f(x,y):= \int_\R f(x-t, y-v(x)t-u(x)[t]^\alpha) \frac{dt}{t}\,.
\endeq
Recall that throughout this section we always assume $\alpha\notin\{1, 2\}$. We intend to show that
\beq\label{1602e2.1}
\|\mc{H} \pj_{k} f\|_p \lesim \|\pj_{k} f\|_p,
\endeq
for each $p>1$ and  $k\in \Z$. The proof is a combination of the $TT^*$ method in the spirit of Stein and Wainger \cite{SW}, and the single annulus estimate for Hilbert transforms along one-variable vector fields by Bateman \cite{Bateman}. Here we will need a maximally truncated version of Bateman's result. \\

We start the proof of \eqref{1602e2.1}. By an anisotropic scaling
\beq
x\to x, y\to \lambda y,
\endeq
it suffices to prove \eqref{1602e2.1} for $k=0$. In the rest of this section we will always assume that $f=\pj_{0} f$. Furthermore, we assume without loss of generality that $u(x)> 0$ for almost every $x$. The case $u(x)<0$ can be handled similarly.\\

Observe that
\beq
\mc{H} f(x, y)=\sum_{l\in \Z}\int_{\R}f(x-t, y- v(x)t-u(x)[t]^{\alpha})\psi_l( u(x)^{1/\alpha}t) \frac{dt}{t}.
\endeq
Here $\psi_l$ is as defined in \eqref{cut-off-f}. Writing $\phi_0=\sum_{l\le 0}\psi_l$, we split the operator $\mc{H}$ into two parts:
\beq\label{1602e2.5}
\begin{split}
\mc{H} f(x, y)=&\int_{\R}f(x-t, y- v(x)t-u(x) [t]^{\alpha})\phi_0( u(x)^{1/\alpha}t) \frac{dt}{t}\\
& + \sum_{l\in \N} \int_{\R}f(x-t, y- v(x)t-u(x) [t]^{\alpha})\psi_l( u(x)^{1/\alpha}t) \frac{dt}{t}.
\end{split}
\endeq
We bound these two terms separately in the following two subsections.

\subsection{Low frequency part}\label{0814subsection3.1}

Here we treat the first summand on the right hand side of \eqref{1602e2.5}.
The idea is to compare it with the (maximally truncated) Hilbert transform along the one-variable vector field $(t, v(x)t)_{t\in \R}$ given by
\beq
\mc{\tilde{H}}^*f(x, y) := \int_{\R}f(x-t, y- v(x)t)\phi_0( u(x)^{1/\alpha}t) \frac{dt}{t}.
\endeq
We want the estimate
\beq\label{eqn:batemanblackbox}
\|\mc{\tilde{H}}^* f\|_p \lesim \| f\|_p
\endeq
to hold for all $p>1$. In the case $v\equiv 0$ this follows from the boundedness of the maximally truncated Hilbert transform. For an arbitrary $v$ it is a result essentially due to Bateman \cite{Bateman}. For a stronger variation norm estimate, see \cite{Guo2} by the first author. Now we look at the difference, which is given by
\beq\label{2111ee3.14}
\int_{\R} \left[ f(x-t, y- v(x)t-u(x) [t]^{\alpha})-f(x-t, y- v(x)t)\right]\phi_0( u(x)^{1/\alpha}t) \frac{dt}{t}.
\endeq
Recall that
\beq
f(x,y)=\int_{\R}f(x,y-z)\check{\psi}_0(z)dz.
\endeq
Substituting this identity into \eqref{2111ee3.14} we obtain
\beq\label{2111ee4.15}
\int_{\R}\int_{\R} f(x-t, y-v(x)t-z)\left[\check{\psi}_0(z-u(x)[t]^{\alpha})-\check{\psi}_0(z) \right]\phi_0( u(x)^{1/\alpha}t) \frac{dt}{t} dz.
\endeq
Using the key restriction  $|u(x)^{1/\alpha}t|\lesssim 1$ derived from \eqref{2111ee4.15}, we apply the fundamental theorem of calculus to deduce
\beq\label{2111ee4.16}
\left|\check{\psi_0}(z-u(x)[t]^{\alpha})-\check{\psi_0}(z) \right| \lesim \sum_{m\in \Z}\frac{1}{(|m|+1)^2} \mathbbm{1}_{[m, m+1]}(z)\cdot u(x)|t|^{\alpha}.
\endeq
Due to the sufficiently fast decay of $(|m|+1)^{-2}$, we will see that the summation in $m$ does not cause any problems.
For every $m\in\Z$ we consider the term
\beq\label{eqn:highfreqterm1}
\int_m^{m+1} \int_{\R} |f(x-t, y-v(x)t-z)| u(x)|t|^{\alpha}\phi_0( u(x)^{1/\alpha}t) \frac{dt}{|t|}dz
\endeq
arising from applying
\eqref{2111ee4.16} to \eqref{2111ee4.15} in the range $m < z < m+1$.
To bound this object in $L^p$ we will make use of the following simple observation.
\begin{lem}\label{lemma:minkowskitrick}
Let $\Gamma:\R^2\to\R$ be a measurable function and $1\le p\le\infty$.
If $K$ is a non-negative measurable function of two variables such that
\beq\label{eqn:minkowskiop1}
f\longmapsto \int_\R f(x-t) K(x,t) dt
\endeq
is bounded as an operator $L^p(\R)\to L^p(\R)$ with constant $C$,
then also
\beq\label{1004e2.14}
f\longmapsto \int_\R f(x-t, y-\Gamma(x,t)) K(x,t) dt
\endeq
is bounded as an operator $L^p(\R^2)\to L^p(\R^2)$ with constant $C$.
\end{lem}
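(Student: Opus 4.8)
The plan is to reduce the two-dimensional operator to the one-dimensional one by freezing the $y$-variable via a partial Fourier transform, or — more elementarily and in the spirit of the ``Minkowski trick'' alluded to in the lemma's name — by a direct slicing argument. First I would take the partial Fourier transform in the second variable. Writing $\widehat{f}(x,\eta)$ for the Fourier transform of $f(x,\cdot)$ evaluated at $\eta$, the operator in \eqref{1004e2.14} transforms, for each fixed $\eta\in\R$, into
\beq
g\longmapsto \int_\R g(x-t)\, e^{-2\pi i \eta \Gamma(x,t)}\, K(x,t)\, dt,
\endeq
acting on $g=\widehat{f}(\cdot,\eta)$. Since $K$ is non-negative and the extra factor $e^{-2\pi i\eta\Gamma(x,t)}$ has modulus one, pointwise domination gives
\beq
\Bigl| \int_\R g(x-t)\, e^{-2\pi i\eta\Gamma(x,t)}\, K(x,t)\, dt \Bigr| \le \int_\R |g(x-t)|\, K(x,t)\, dt,
\endeq
so the modulated operator is bounded on $L^2(\R)$ with the same constant $C$ as the operator \eqref{eqn:minkowskiop1} (here one uses $p=2$). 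By Plancherel in $\eta$ and Fubini this would yield the claim for $p=2$. However, this route only gives $p=2$, whereas the lemma asserts it for all $1\le p\le\infty$, so I would not use it as the main argument.

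Instead, the clean approach valid for all $p$ is a slicing/Fubini argument. Fix the operator $Tf(x,y)=\int_\R f(x-t,y-\Gamma(x,t))K(x,t)\,dt$. For a fixed $x$, the map $t\mapsto \Gamma(x,t)$ is just some measurable function, and the inner structure in the $y$-variable is a pure translation composed with nothing else — so I want to linearize by testing against the $L^{p'}$ dual. Concretely, I would write, for $g\in L^{p'}(\R^2)$,
\beq
\langle Tf, g\rangle = \int_{\R^2}\int_\R f(x-t,y-\Gamma(x,t))\,K(x,t)\,g(x,y)\,dt\,dx\,dy,
\endeq
and in the innermost double integral over $(x,y)$, for fixed $t$, substitute $y\mapsto y+\Gamma(x,t)$ (a measure-preserving shift in $y$ for each fixed $x$). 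This is not obviously helpful directly, so the cleanest formulation is the following: think of $L^p(\R^2)=L^p_x(L^p_y)$ and observe that $T$ acts ``diagonally'' — for each pair $(x,t)$ the operation $f(\cdot,\cdot)\mapsto f(x-t,\cdot-\Gamma(x,t))$ is an isometry of $L^p_y$. Therefore, by Minkowski's integral inequality applied in the $y$-variable,
\beq
\|Tf(x,\cdot)\|_{L^p_y} \le \int_\R \|f(x-t,\cdot-\Gamma(x,t))\|_{L^p_y}\, K(x,t)\, dt = \int_\R \|f(x-t,\cdot)\|_{L^p_y}\, K(x,t)\, dt.
\endeq
Setting $F(x):=\|f(x,\cdot)\|_{L^p_y}$, the right-hand side is exactly the operator \eqref{eqn:minkowskiop1} applied to $F$, so $\|Tf(x,\cdot)\|_{L^p_y}\le (SF)(x)$ where $S$ is that one-dimensional operator. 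Taking the $L^p_x$ norm of both sides and using the boundedness of $S$ gives
\beq
\|Tf\|_{L^p(\R^2)} = \bigl\| \|Tf(x,\cdot)\|_{L^p_y}\bigr\|_{L^p_x} \le \|SF\|_{L^p_x}\le C\|F\|_{L^p_x} = C\|f\|_{L^p(\R^2)},
\endeq
which is the desired bound. For $p=\infty$ one argues directly: $\|Tf\|_\infty \le \|f\|_\infty \cdot \mathrm{ess\,sup}_x\int_\R K(x,t)\,dt$, and the latter quantity is at most $C$ since testing $S$ against constants (or a localized approximation) gives $\mathrm{ess\,sup}_x\int K(x,t)\,dt\le C$.

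The only subtle point — and hence the ``main obstacle'', though it is minor — is the measurability bookkeeping: one must check that $(x,y,t)\mapsto f(x-t,y-\Gamma(x,t))K(x,t)$ is jointly measurable so that Fubini and Minkowski's integral inequality apply, which follows since $\Gamma$ and $K$ are measurable and composition/translation preserve measurability, and that the one-dimensional operator $S$, a priori defined on $L^p(\R)$ by an absolutely convergent integral, indeed dominates the $y$-slices as claimed (this needs $K\ge 0$, which is hypothesized). I would also remark that the argument shows a stronger vector-valued statement: $S$ being bounded on $L^p(\R)$ implies its canonical extension to $L^p(\R;B)$ for any Banach space $B$ is bounded with the same norm, and the lemma is the special case $B=L^p(\R)$. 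I would state this as the proof, keeping the write-up to a few lines, since all the steps are routine once the slicing is set up correctly.
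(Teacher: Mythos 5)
Your main argument is exactly the paper's proof: Minkowski's integral inequality in the $y$-variable, translation invariance of $\|\cdot\|_{L^p_y}$ (since $\Gamma(x,t)$ does not depend on $y$), and then the hypothesis applied to $F(x)=\|f(x,\cdot)\|_{L^p_y}$. The preliminary Fourier discussion is correctly discarded, and the rest (the separate $p=\infty$ case, the measurability remarks) is routine padding around the same proof.
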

\begin{proof}
Take the $L^p$ norm of the right hand side of \eqref{1004e2.14}. For fixed $x$ consider the quantity
\beq
\left(\int_\R \left|\int_\R f(x-t,y-\Gamma(x,t))K(x,t)dt\right|^p dy \right)^{1/p}.
\endeq
Applying Minkowski's integral inequality we bound this by
\beq\label{lemma 2.1 eqn}
\int_\R \|f(x-t,y-\Gamma(x,t))\|_{L^p(dy)} K(x,t) dt.
\endeq
Notice that $\Gamma(x ,t)$ is independent of $y$, hence by a simple change of variable, \eqref{lemma 2.1 eqn} is equal to
\beq
\int_\R \|f(x-t,y)\|_{L^p(dy)} K(x,t) dt.
\endeq
Now take the $L^p$ norm in $x$ and apply the hypothesis \eqref{eqn:minkowskiop1} to $g(x)=\|f(x,\cdot)\|_p$. This concludes the proof of Lemma \ref{lemma:minkowskitrick}.
\end{proof}

By Minkowski's inequality, the $L^p$ norm of \eqref{eqn:highfreqterm1} is no greater than
\beq\label{eqn:highfreqterm2}
\int_m^{m+1} \left(\int_{\R}\int_{\R}\left| \int_{\R} |f(x-t, y-v(x)t-z)| u(x)|t|^{\alpha}\phi_0( u(x)^{1/\alpha}t) \frac{dt}{|t|}\right|^p dydx\right)^{1/p} dz.
\endeq
Defining \beq
K(x,t) := u(x)|t|^\alpha \phi_0(u(x)^{1/\alpha} t) \frac1{|t|}
\endeq
we see that the operator in \eqref{eqn:minkowskiop1} is dominated by the Hardy-Littlewood maximal operator, which is bounded on $L^p(\R)$ for all $p>1$.
Hence \eqref{eqn:highfreqterm2} is bounded by (a constant multiple of) $\|f\|_p$. This completes the proof for the first summand on the right hand side of \eqref{1602e2.5}.

\subsection{High frequency part} Here we handle the second summand on the right hand side of \eqref{1602e2.5}.
By the triangle inequality it suffices to prove that there exists $\gamma>0$ such that
\beq\label{1602e2.18}
\left\| \int_{\R}f(x-t, y- v(x)t-u(x) [t]^{\alpha})\psi_l( u(x)^{1/\alpha}t) \frac{dt}{t} \right\|_p \lesim 2^{-\gamma l}\|f\|_p.
\endeq
By Lemma \ref{lemma:minkowskitrick}, \eqref{1602e2.18} holds for all $p>1$ without the exponentially decaying factor $2^{-\gamma l}$.

Hence by interpolation it suffices to prove \eqref{1602e2.18} for $p=2$. This will the goal of the present subsection. \\

To proceed, we apply a partial Fourier transform of the left hand side of \eqref{1602e2.18} in the $y$-variable. In view of Plancherel's theorem, \eqref{1602e2.18} for $p=2$ is equivalent to
\beq
\left\|\int_{\R} g_\eta(x-t) e^{i v(x)\eta t+i u(x)\eta [t]^{\alpha}}\psi_l(u(x)^{1/\alpha}t)\frac{dt}{t}\right\|_{L^2(dxd\eta)} \lesim 2^{-\gamma l}\|f\|_2,
\endeq
where
\beq
g_\eta(x) := \int_\R e^{-i\eta y} f(x,y) dy
\endeq
denotes the Fourier transform of the function $f$ in its second variable. By Fubini's theorem this reduces to proving the following one-dimensional estimate:
\beq\label{1602e2.21}
\left\|\int_{\R} g(x-t) e^{i v(x)t+i u(x)[t]^{\alpha}}\psi_l(u(x)^{1/\alpha}t)\frac{dt}{t}\right\|_{2} \lesim 2^{-\gamma l}\|g\|_2
\endeq
for all $g$ in $L^2(\R)$.

Observe that the phase function contains a linear term. This amounts to providing $L^2$ bounds for a modulation invariant operator with a special polynomial phase.

Given the shape of the operator one might guess that one has to use the time-frequency approach implemented by the third author in \cite{Lie1}, \cite{Lie2}. However, using crucially that $\alpha\not=1,2$, one can rely exclusively on $TT^*$ arguments in order to prove \eqref{1602e2.21}. A similar idea first appeared in a slightly simpler context in \cite{GPRY} by Pierce, Yung, the first author and the fourth author.\\

The first step to prove \eqref{1602e2.21} is to decompose the integral inside the norm on the left hand side of \eqref{1602e2.21} into regions where $t$ is either positive or negative. Both parts are treated in the same way, so we only detail the estimate for the positive part.
Accordingly, we denote
\beq
Tg(x) := \int_0^\infty g(x-t) e^{i v(x)t+i u(x)t^{\alpha}}\psi_l(u(x)^{1/\alpha}t)\frac{dt}{t}.
\endeq

Then we have
\beq
TT^*g(y)=\int_{\R}(\Phi^l_{u(y), v(y)}*\tilde{\Phi}^l_{u(x), v(x)})(y-x)g(x)dx,
\endeq
where here we set
\beq
\Phi^l_{u, v}(\xi):=e^{iv\xi+iu\xi^{\alpha}}\frac{\psi(2^{-l}u^{1/\alpha}\xi)}{\xi}
\quad\text{and}\quad \tilde{\Phi}^l_{u, v}(\xi):=\overline{\Phi^l_{u, v}(-\xi)}\,,
\endeq
with $\psi(\xi):=\psi_0(\xi) \chi_{(0,\infty)}(\xi)$. The kernel of $TT^*$ satisfies
\beq\label{1602e2.26}
\begin{split}
& |\Phi^l_{u(y), v(y)}*\tilde{\Phi}^l_{u(x), v(x)}|(\xi)\\
& =\left| \int_{\R} e^{i (v(y)-v(x))\eta+iu(y)\eta^{\alpha}-iu(x)(\eta-\xi)^{\alpha}} \frac{\psi(2^{-l} u(y)^{1/\alpha}\eta)}{\eta} \frac{\psi(2^{-l}u(x)^{1/\alpha}(\eta-\xi))}{\eta-\xi}d\eta\right|.
\end{split}
\endeq
Let us assume for the moment that $u(x)\le u(y)$.
Denoting
\beq
h:=\left(\frac{u(x)}{u(y)}\right)^{1/\alpha}\quad\text{and}\quad a:= 2^{-l}u(x)^{1/\alpha},
\endeq
via the change of variables
\beq
2^{-l} u(y)^{1/\alpha}\eta\to \eta,
\endeq
we see that \eqref{1602e2.26} equals to
\beq
a\left| \int_{\R} e^{i w\eta+i2^{\alpha l}\eta^{\alpha}-i2^{\alpha l}(h\eta-a\xi)^{\alpha}} \frac{\psi(\eta)}{\eta} \frac{\psi(h\eta-a\xi)}{h\eta-a\xi}d\eta\right|
\endeq
where $w$ is some quantity depending on $x,y$ and $l$, the value of which will be irrelevant to us. In the case $u(y)\le u(x)$ we interchange the roles of $u(x)$ and $u(y)$.

To finish this argument we use the following oscillatory integral estimate.
\begin{lem}\label{1602lemma2.1}
As above, assume $\alpha\notin\{1,2\}$ and let $\psi$ be a smooth function supported in $[1/2,2]$. Then, there exists $\lambda>0$ such that for all $\xi,w\in\R$, $0<h\le 1$ and $l>0$ we have
\beq\label{eqn:keyestimate}
\begin{split}
&  \left|  \int_{\R} e^{i w\eta+i2^{\alpha l}\eta^{\alpha}-i2^{\alpha l}(h\eta-\xi)^{\alpha}} \frac{\psi(\eta)}{\eta} \frac{\psi(h\eta-\xi)}{h\eta-\xi}d\eta\right|\\
& \lesim  \mathbbm{1}_{[-2^{-\lambda l}, 2^{-\lambda l}]}(\xi)+ 2^{-\lambda l}\mathbbm{1}_{[-2, 2]}(\xi).
\end{split}
\endeq
\end{lem}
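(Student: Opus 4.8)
The plan is to prove Lemma~\ref{1602lemma2.1} by van der Corput-type stationary phase analysis of the phase
\[
\Phi(\eta) = w\eta + 2^{\alpha l}\eta^\alpha - 2^{\alpha l}(h\eta-\xi)^\alpha
\]
on the support $\eta\in[1/2,2]\cap\{h\eta-\xi\in[1/2,2]\}$, with the parameter $2^{\alpha l}$ playing the role of a large frequency $\lambda=2^{\alpha l}$. The first thing I would do is record the derivatives of the two nonlinear terms: the first term contributes $2^{\alpha l}\alpha\eta^{\alpha-1}$ to $\Phi'$ and $2^{\alpha l}\alpha(\alpha-1)\eta^{\alpha-2}$ to $\Phi''$, while the second contributes $-2^{\alpha l}\alpha h(h\eta-\xi)^{\alpha-1}$ and $-2^{\alpha l}\alpha(\alpha-1)h^2(h\eta-\xi)^{\alpha-2}$. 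On the compact $\eta$-support these are all comparable (up to constants depending only on $\alpha$) to $2^{\alpha l}$ times a bounded nonvanishing factor — with the crucial proviso that $\alpha\neq 1$ is what makes $\Phi''\neq 0$ possible, and $\alpha\neq 2$ is what will make $\Phi''$ nonvanishing even when cancellation occurs between the two terms (see below).

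The argument then splits into two regimes according to $|\xi|$. \emph{Case 1: $|\xi|\gtrsim 2^{-\lambda l}$ for a small $\lambda>0$ to be chosen.} Here I would show $|\Phi'(\eta)|\gtrsim 2^{\alpha l}|\xi|^{\gamma}$ (or more simply $|\Phi'|\gtrsim 2^{(\alpha-\lambda)l}$ after absorbing the linear term $w$) on the support: because $h\le 1$, the two nonlinear contributions to $\Phi'$ have magnitudes $2^{\alpha l}\alpha\eta^{\alpha-1}$ and $2^{\alpha l}\alpha h^\alpha(\eta-\xi/h)^{\alpha-1}$, and the only way these can cancel along with the linear term $w$ is on a set of $\eta$ of small measure; more robustly one argues that $\Phi''$ cannot vanish identically because $\alpha\neq 2$ forces $\eta^{\alpha-2}$ and $h^2(h\eta-\xi)^{\alpha-2}$ to be linearly independent as functions of $\eta$ unless $h=\pm 1$ and $\xi=0$. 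Thus $\Phi'$ is monotone on $O(1)$ subintervals and $|\Phi'|$ exceeds a threshold $\gtrsim 2^{(\alpha-\lambda)l}$ outside a set of measure $\lesssim 2^{-c\lambda l}$; applying non-stationary phase (integration by parts) on the good set and trivially estimating the bad set, one gets the bound $\lesssim 2^{-\lambda' l}$, absorbed into the second term $2^{-\lambda l}\mathbbm{1}_{[-2,2]}(\xi)$ on the right of \eqref{eqn:keyestimate}. \emph{Case 2: $|\xi|\lesssim 2^{-\lambda l}$.} Here $\Phi$ may genuinely have a stationary point; but now I would use the second derivative: since $\alpha\neq 1$, $|\Phi''(\eta)|\sim 2^{\alpha l}$ uniformly on the support (the $h^2(h\eta-\xi)^{\alpha-2}$ term is smaller by a factor $h^2\le 1$ when $h$ is bounded away from $1$, and when $h\to 1$ the two second-derivative terms $\eta^{\alpha-2}-(h\eta-\xi)^{\alpha-2}$ still do not cancel to leading order because $\alpha\neq 2$ — the difference is $\sim (1-h)\eta + \xi$ times $\eta^{\alpha-3}$ up to constants, which one must track carefully). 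Then van der Corput's lemma gives the bound $\lesssim 2^{-\alpha l/2}$ for the oscillatory integral (the $1/\eta$ and $1/(h\eta-\xi)$ weights are smooth and of bounded variation on the support and cost nothing), which is $\lesssim 1 = \mathbbm{1}_{[-2^{-\lambda l},2^{-\lambda l}]}(\xi)$ on this range, giving the first term on the right of \eqref{eqn:keyestimate}.

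The main obstacle — and the place where the hypotheses $\alpha\neq 1$ and $\alpha\neq 2$ are both genuinely used — is the near-diagonal regime $h\to 1$, $\xi\to 0$, where the two nonlinear terms in $\Phi$ nearly coincide and naive lower bounds on $\Phi'$ and $\Phi''$ degenerate. The fix is to expand $2^{\alpha l}[\eta^\alpha-(h\eta-\xi)^\alpha]$ in the small parameters $1-h$ and $\xi$: writing $h\eta-\xi = \eta - ((1-h)\eta+\xi)$ and Taylor expanding, the leading surviving term is $2^{\alpha l}\alpha\eta^{\alpha-1}((1-h)\eta+\xi)$, which is linear in $\eta$ up to the smooth factor $\eta^{\alpha-1}$, and the \emph{next} term, of size $2^{\alpha l}\alpha(\alpha-1)\eta^{\alpha-2}((1-h)\eta+\xi)^2$, carries genuine curvature precisely because $\alpha\neq 1$; meanwhile $\alpha\neq 2$ prevents $\eta^{\alpha-2}$ from being constant, so this curvature term is not itself killed by differentiation. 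One then runs the Case~1/Case~2 dichotomy with $2^{-\lambda l}$ replaced by a threshold measuring the size of $(1-h)$ and $\xi$ jointly, obtaining a uniform gain $2^{-\lambda l}$ in every regime after choosing $\lambda$ small relative to $\alpha$ and $|\alpha-1|$, $|\alpha-2|$. I would organize this as: (i) dispose of the region where $h$ is bounded away from $1$ by the clean $\Phi''\sim 2^{\alpha l}$ estimate; (ii) in the region $h$ near $1$, perform the above Taylor expansion and run van der Corput of order $1$ or $2$ according to whether the effective linear coefficient $w + 2^{\alpha l}\alpha\eta^{\alpha-1}((1-h)\eta+\xi)$-type quantity is large or small; (iii) collect the estimates and choose $\lambda$.
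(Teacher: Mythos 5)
Your overall strategy (a dichotomy in $|\xi|$ at the threshold $2^{-\lambda l}$, followed by van der Corput on the nontrivial range) is the right one and matches the paper's, but the quantitative heart of the argument is missing, and two of your intermediate claims are false. First, the claim that $|\Phi''(\eta)|\sim 2^{\alpha l}$ uniformly on the support --- both in your Case 2 and in your step (i) for ``$h$ bounded away from $1$'' --- is wrong. At $h=1$, $\xi=0$ one has $\Phi''\equiv 0$; and the cancellation is not governed by $h$ being near $1$: for instance with $\alpha=4$, $\Phi''$ is a constant multiple of $2^{\alpha l}\bigl(\eta-h(h\eta-\xi)\bigr)\bigl(\eta+h(h\eta-\xi)\bigr)$, which vanishes wherever $\eta(1-h^2)=-h\xi$, a locus that meets the support for suitable $\xi<0$ no matter how small $h$ is. (Your Case 2 survives despite this, but only because the first indicator on the right-hand side of \eqref{eqn:keyestimate} makes the trivial triangle-inequality bound sufficient there --- no oscillation estimate is needed at all when $|\xi|\le 2^{-\lambda l}$.) Second, in the genuinely nontrivial regime $|\xi|>2^{-\lambda l}$ you pass from ``$\eta^{\alpha-2}$ and $h^2(h\eta-\xi)^{\alpha-2}$ are linearly independent unless $h=\pm1,\ \xi=0$'' to a uniform sublevel-set bound for $\Phi'$. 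That step is a non sequitur: a qualitative non-vanishing statement gives no constants uniform in $h,\xi,w,l$, and near the zero set of $\Phi''$ the measure of $\{|\Phi'|\le\tau\}$ cannot be controlled by second-derivative information alone.

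What is actually needed, and what the paper proves, is the pointwise lower bound $\max\bigl(|Q_\xi''(\eta)|,|Q_\xi'''(\eta)|\bigr)\gtrsim 2^{\alpha l}|\xi|$ on the support. This follows from writing the vector $\bigl(Q_\xi''(\eta),Q_\xi'''(\eta)\bigr)$ as $\alpha(\alpha-1)2^{\alpha l}\,M\,\bigl(\eta^{\alpha-2},\,-h^2(h\eta-\xi)^{\alpha-2}\bigr)^{T}$ for an explicit $2\times 2$ matrix $M$ with bounded entries and
\[
|\det M|=\frac{|(\alpha-2)\xi|}{\eta\,(h\eta-\xi)}\gtrsim|\xi|,
\]
so that the degeneracy of the pair $(Q'',Q''')$ is measured by $|\xi|$ alone (not by $1-h$ and $\xi$ jointly, as your Taylor-expansion fix presumes); here $\alpha\neq 1$ makes the prefactor nonzero and $\alpha\neq 2$ makes the determinant nonzero. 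With $|\xi|>2^{-\lambda l}$ this gives $|(Q'',Q''')|\gtrsim 2^{(\alpha-\lambda)l}$, and van der Corput applied with the second or third derivative yields the decay $2^{-(\alpha-\lambda)l/3}$, which closes the estimate after choosing $\lambda=\alpha/4$. Without some version of this determinant computation (or an equivalent direct lower bound on a fixed derivative of the phase in terms of $|\xi|$), your outline does not yield the uniform gain $2^{-\lambda l}$.
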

We postpone the proof to the end of this section. Using the lemma we deduce that
\[ |\Phi^l_{u(y), v(y)}*\tilde{\Phi}^l_{u(x),v(x)}|(\xi)\lesssim \sum_{i=1,2} (a_i\mathbbm{1}_{[-2^{-\lambda l}, 2^{-\lambda l}]}(a_i \xi)+ 2^{-\lambda l} a_i\mathbbm{1}_{[-2, 2]}(a_i\xi)),  \]
where $a_1 := 2^{-l} u(y)^{1/\alpha}, a_2 := 2^{-l} u(x)^{1/\alpha}$.
Therefore we have
\beq
\begin{split}
|\langle TT^* g, h\rangle| &\le \int_\R \int_\R |(\Phi^l_{u(y),v(y)}*\tilde{\Phi}^l_{u(x),u(y)})(y-x) g(x) h(y) | dx dy\\
&\lesssim 2^{-\lambda l} \left( \int_\R\int_\R Mg(x) |h(y)| dx dy + \int_\R\int_\R |g(x)| Mh(y) dx dy \right)\\
&\lesssim 2^{-\lambda l} \|g\|_2 \|h\|_2.
\end{split}
\endeq
Here $M$ denotes the Hardy-Littlewood maximal function and we have used its $L^2$ boundedness as well as the Cauchy-Schwarz inequality in the last step.
This concludes the proof of \eqref{1602e2.21}.

\begin{proof}[Proof of Lemma \ref{1602lemma2.1}]
Denote the left hand side of \eqref{eqn:keyestimate} by $I_\xi$. First note that $I_\xi=0$ if $|\xi|>2$. Next, if $|\xi|\le 2^{-\lambda l}$, then the estimate follows from the triangle inequality and so we also assume that $|\xi|>2^{-\lambda l}$.
In the following we consider only $\eta$ such that the integrand in the integral defining $I_\xi$ is not zero. This implies that $\eta,h\eta-\xi\in [1/2,2]$.
We analyze the phase function
\beq
Q_\xi(\eta) := w\eta + 2^{\alpha l} (\eta^\alpha - (h\eta-\xi)^\alpha).
\endeq
Note that
\beq
Q_\xi''(\eta) = \alpha (\alpha-1) 2^{\alpha l} (\eta^{\alpha-2} - h^2(h\eta-\xi)^{\alpha-2}),
\endeq
\beq
Q_\xi'''(\eta) = \alpha (\alpha-1)(\alpha-2) 2^{\alpha l} (\eta^{\alpha-3} - h^3(h\eta-\xi)^{\alpha-3}),\\
\endeq

and observe that the vector $X: =\left(\begin{array}{c}
Q_\xi''(\eta)\\
Q_\xi'''(\eta)\end{array}\right)$ can be written as
\beq\label{eqn:matrixexpr}
\alpha(\alpha-1)2^{\alpha l}\left(\begin{array}{cc}
1 & 1\\
(\alpha-2)\eta^{-1} & (\alpha-2)h(h\eta-\xi)^{-1}\end{array}\right)
\left(\begin{array}{c}
\eta^{\alpha-2}\\
-h^2 (h\eta-\xi)^{\alpha-2}
\end{array}\right).
\endeq
This point of the argument crucially depends on the hypothesis $\alpha\not=2$.
Denoting the $2\times 2$ matrix in the above expression by $M$ we calculate
\beq
|\det(M)| = \frac{|(\alpha-2)\xi|}{(h\eta-\xi)\eta}\gtrsim |\xi|> 2^{-\lambda l}.
\endeq
This allows us to estimate
\beq
|X|\gtrsim 2^{(\alpha-\lambda)l}.
\endeq
Invoking van der Corput's lemma \cite[Chapter VIII.1]{Stein} we conclude that
\beq
I_\xi \lesssim 2^{-(\alpha-\lambda)/3 l}=2^{-\lambda l}\,,
\endeq
where in the last line above we have set $\lambda := \frac{\alpha}4$.
\end{proof}

\section{Proof of Theorem \ref{thm:mainHT}}\label{sectionhilbertfull}

Before starting our proof, for notational simplicity, we introduce the following convention: Throughout the section we omit the dependence on $u,\alpha$ and simply refer to our operator as $\mc{H}$. Recalling the commutation relation\footnote{Recall that $\pj_{k}$ denotes a Littlewood-Paley projection in the second variable.}
\beq\label{commutationrelation}
\h\pj_{k} =\pj_{k} \h\,,
\endeq
by Littlewood-Paley theory it suffices to prove that
\beq\label{eqn:s3squarefunct}
\Big\|\Big(\sum_{k\in \Z}|\mc{H}\pj_{k} f|^2\Big)^{1/2}\Big\|_p \lesim \|f\|_p\,.
\endeq

We stress here that the one-variable assumption \eqref{eqn:onevarassumpt} is the key fact that guarantees the commutation relation \eqref{commutationrelation}. This is the only place in this section where the one-variable assumption \eqref{eqn:onevarassumpt} is explicitly used. An implicit appearance is in the estimate \eqref{1702e4.15} for the case $p=2$, which is the content of the previous section.\\

We return to the proof of \eqref{eqn:s3squarefunct}. In Section \ref{sec:singleann} we already established that
\[ \|\mc{H} \pj_{k} f\|_p \lesssim \|\pj_{k} f\|_p \]
holds (this is the case $v\equiv 0$). Here we should note that the proof in Section \ref{sec:singleann} needs a small modification in the case when $v\equiv 0$ and $\alpha=2$. Namely, in that situation the exponential decay estimate \eqref{1602e2.18} is essentially a special case of a well-known result due to Stein and Wainger (see \cite[Theorem 1]{SW}).

Fix now $k\in\Z$. In view of the shape of the phase of our multiplier, we decompose our operator into a low and high frequency component respectively:
\beq
\begin{split}
\h \pj_{k} f(x, y)&=\sum_{l\in \Z}\int_{\R}(\pj_{k} f)(x-t, y- u(x) [t]^{\alpha})\psi_l( u(x)^{1/\alpha}t) \frac{dt}{t}\\
			&=\Big(\sum_{l\le -k/\alpha}+\sum_{l>-k/\alpha}\Big)\int_{\R}(\pj_{k} f)(x-t, y- u(x) [t]^{\alpha})\psi_l( u(x)^{1/\alpha}t) \frac{dt}{t}.
\end{split}
\endeq
We denote
\beq
T_{k, 0}f(x, y):=\sum_{l\le -k/\alpha}\int_{\R}f(x-t, y- u(x) [t]^{\alpha})\psi_l( u(x)^{1/\alpha}t) \frac{dt}{t}
\endeq
and, for $j\ge 1$
\beq
T_{k, j}f(x, y):=\int_{\R}f(x-t, y- u(x) [t]^{\alpha})\psi_{-\frac{k}{\alpha}+j}( u(x)^{1/\alpha}t) \frac{dt}{t}
\endeq
 Using the triangle inequality we obtain
\beq\label{1702e4.12}
\Big\|\Big(\sum_k |\h \pj_{k} f|^2\Big)^{1/2}\Big\|_p \lesim \Big\|\Big(\sum_k |T_{k, 0} \pj_{k} f|^2\Big)^{1/2}\Big\|_p+\sum_{j\in \N}\Big\|\Big(\sum_k |T_{k, j} \pj_{k} f|^2\Big)^{1/2}\Big\|_p.
\endeq
As in the previous section (see \eqref{2111ee3.14}  -- \eqref{eqn:highfreqterm2}) we treat $T_{k,0}\pj_{k} f$ as a perturbation of \beq
\sum_{l\le -k/\alpha}\int_{\R}P_k^{(2)} f(x-t, y)\psi_l( u(x)^{1/\alpha}t) \frac{dt}{t}.
\endeq
This yields
\beq\label{maximal domination}
|T_{k, 0}\pj_{k} f| \lesim M_S(\pj_{k} f)+ H^*(\pj_{k} f).
\endeq
Here $M_S$ denotes the strong maximal function and $H^*$ a maximally truncated Hilbert transform applied in the first variable. Indeed, one may deduce \eqref{maximal domination} using the same arguments as those of Section \ref{0814subsection3.1}. The vector-valued estimates for $M_S$ follow from the corresponding estimates for the one dimensional Hardy-Littlewood maximal function which are well-known (see Stein \cite[Chapter II.1]{Stein}). Similarly, the vector-valued estimates for $H^*$ follow from Cotlar's inequality and the vector-valued estimates for the Hilbert transform and the maximal function. Thus we have
\beq
\Big\|\Big(\sum_{k\in\Z} |T_{k, 0} \pj_{k} f|^2\Big)^{1/2}\Big\|_p \lesim \Big\|\Big(\sum_{k\in\Z}|\pj_{k} f|^2\Big)^{1/2}\Big\| \lesim \|f\|_p
\endeq
for all $p>1$. This finishes the proof for the first term on the right hand side of \eqref{1702e4.12}.\\

To bound the second term in \eqref{1702e4.12} we will prove that there exists $\gamma_p>0$ such that
\beq\label{1702e4.15}
\Big\|\Big(\sum_{k\in\Z} |T_{k, j} \pj_{k} f|^2\Big)^{1/2}\Big\|_p \lesim 2^{-\gamma_p j}\|f\|_p.
\endeq
For $p=2$ this follows from \eqref{1602e2.18} with $v\equiv 0$. Note here again that in the case $v\equiv0$, $\alpha=2$ the estimate \eqref{1602e2.18} is a consequence of \cite[Theorem 1]{SW}. Hence, by interpolation it suffices to prove
\beq\label{0704e1.9}
\Big\|\Big(\sum_{k\in\Z} |T_{k, j} \pj_{k} f|^2\Big)^{1/2}\Big\|_p \lesim j^4 \|f\|_p.
\endeq

Let us first note that if $p$ is sufficiently close to 2, then \eqref{0704e1.9} follows immediately from an interpolation argument. To carry out this interpolation, we observe the trivial pointwise bound
\beq
|T_{k, j} \pj_{k} f| \lesim 2^{\alpha j} M_S (\pj_{k} f).
\endeq
This implies that
\beq\label{1702e4.17}
\Big\|\Big(\sum_{k\in\Z} |T_{k, j} \pj_{k} f|^2\Big)^{1/2}\Big\|_p \lesim 2^{\alpha j}\|f\|_p
\endeq
for all $p>1$.
Interpolating with the bound for $p=2$ in \eqref{1702e4.15}, we can find a positive constant $\varepsilon_0>0$ such that \eqref{1702e4.15} holds true for all $p\in (2-\varepsilon_0, 2+\varepsilon_0)$.\\

Recall that our goal is to prove \eqref{0704e1.9} for all $p>1$.  For convenience we choose to present only the case $[t]^{\alpha}=|t|^{\alpha}$. The other case $[t]^{\alpha}=\mathrm{sgn}(t)|t|^{\alpha}$ can be treated by the same arguments.

Our strategy is to derive a sufficiently fine-grained pointwise estimate of $|T_{k,j} \pj_{k} f|$ by appropriate shifted maximal functions and then apply vector-valued bounds to conclude \eqref{0704e1.9}. The use of the shifted maximal operator was inspired by the work of the third author \cite{Lie4} (see there Section 2.4., Lemma 2).

First let us consider the case $k=0$. By definition we have
\beq
T_{0, j} \pj_{0} f(x, y)=\iint_{\R^2} f(x-t, y-s-u(x)|t|^{\alpha})\frac{\psi_{j}(u(x)^{\frac{1}{\alpha}}t)}t \check{\psi}(s) dtds.
\endeq
Up to Schwartz tails in $s$, this is essentially an average of $f$ over a thickened segment of a translate of the curve $(t,u(x)|t|^\alpha)$. The idea now is to cut up this thickened curve segment into pieces that are well approximated by rectangles.

Taking absolute values and using the triangle inequality we see that the previous display is
\beq
\lesssim \frac{1}{\lambda_{x,j}}\int_{|t|\approx\lambda_{x,j}} \int_\R |f(x-t, y-s-u(x) |t|^{\alpha})\check{\psi}(s)|ds dt,
\endeq
where $\lambda_{x,j} := 2^j u(x)^{-1/\alpha}$ and the notation $|t|\approx \lambda$ means $\frac12 \lambda\le |t|\le 2\lambda$.
By the rapid decay of $\check{\psi}$ this is
\beq\label{1004e4.23}
\lesssim \sum_{\tau\in \Z}\frac{1}{(1+|\tau|)^{10}}\frac{1}{\lambda_{x,j}}\int_{|t|\approx\lambda_{x,j}} \int_{\tau}^{\tau+1} |f(x-t, y-s-u(x) |t|^{\alpha})|ds dt.
\endeq

Once at this point, the intuition is given by the following observation: the function $f= \pj_{0} f$ ``sees" the  $y-$universe in unit steps; that is, $f$ is morally $y-$constant on segments of length one. Consequently, it is natural to further discretize the location of $t$ in intervals on which the variation of the term $u(x) |t|^{\alpha}$ does not exceed the order of one. This invites us to consider the following construction.

Set $\delta_{x,j} := 2^{-(\alpha-1)j} u(x)^{-1/\alpha}$ and cover the region $\frac12 \lambda_{x,j}\le |t|\le 2\lambda_{x,j}$ by
intervals $\{I_m\}_{m=0}^{N_j-1}$ where
\[I_m=\Big\{t\,:\,\frac12 \lambda_{x,j}+m\delta_{x,j}\le |t|\le \frac12\lambda_{x,j}+(m+1)\delta_{x,j}\Big\}\]
and $N_{j}\in\N$ is such that $\frac32 \lambda_{x,j}\le N_{j}\delta_{x,j}\le 2\lambda_{x,j}$.  Notice that $N_{j}~\sim ~2^{\alpha j}$ and moreover that $N_j$ can be chosen independently of $x$.

With this we have
\[ \frac{1}{\lambda_{x,j}}\int_{|t|\approx\lambda_{x,j}} \int_{\tau}^{\tau+1} |f(x-t, y-s-u(x) |t|^{\alpha})|ds dt \]
\beq\label{eqn:HTpflargepcutting1} \lesssim \frac1{N_{j}} \sum_{m=0}^{N_{j}-1} \frac1{|I_m|} \int_{I_m} \int_0^1 |f(x-t,y-s-\tau-u(x)|t|^\alpha)| ds dt. \endeq
We now set
\[ \mathbf{C}_m := \Big\{ (t,s+\tau+u(x)|t|^\alpha)\,:\, t\in I_m,\,s\in[0,1] \Big\}\subset I_m\times J_m,\]
where
\[ J_m = \left[\tau+u(x)\Big(\frac12\lambda_{x,j}+m\delta_{x,j}\Big)^\alpha, \tau+1+u(x)\Big(\frac12\lambda_{x,j}+(m+1)\delta_{x,j}\Big)^\alpha\right]. \]
 Notice that, because of our choice of $\delta_{x,j}$, the thickened curve segment $\mathbf{C}_m$ is contained in a rectangular region of comparable area. Indeed, we have by the mean value theorem,
\[ |J_m|\sim_\alpha 1+u(x)\delta_{x,j} \lambda_{x,j}^{\alpha-1}\sim_\alpha 1. \]
Thus we further have that \eqref{eqn:HTpflargepcutting1} is bounded by a constant multiple of
\[\frac1{N_j}\sum_{m=0}^{N_j-1}\frac1{|I_m\times J_m|} \iint_{I_m\times J_m} |f(x-t,y-s)| dtds. \]
Given a non-negative parameter $\sigma$, we define the \emph{shifted maximal operator} as
\beq
M^{(\sigma)} f(z):=\sup_{z\in I\subset\R}\frac{1}{|I|} \int_{I^{(\sigma)}} |f(\zeta)|d\zeta.
\endeq
Here the supremum goes over all bounded intervals $I$ containing $z$, and $I^{(\sigma)}$ denotes a shift of the interval $I=[a,b]$ given by
\[I^{(\sigma)}:=[a-\sigma\cdot|I|, b-\sigma\cdot |I|]\cup [a+\sigma|I|, b+\sigma|I|]. \]
Note that
\[ \frac1{|I_m\times J_m|} \iint_{I_m\times J_m} |f(x-t,y-s)| dtds \le M_1^{(\sigma^{(1)}_m)} M_2^{(\sigma^{(2)}_m+\tau)} f(x,y), \]
where
\[ \left\{\begin{array}{ll}
\sigma^{(1)}_m &:= 2^{\alpha j-1}+m,\\
\sigma^{(2)}_m &:= c_\alpha (2^j+2^{-(\alpha-1)j} m)^\alpha,
\end{array}\right. \]
and $c_\alpha$ is a constant only depending on $\alpha$ and  $M_1^{(n)}$ (respectively, $M_2^{(n)}$) denotes the shifted maximal operator applied in the first (respectively, second) variable. Notice that since $N_{j}\sim 2^{\alpha j}$ and $m<N_{j}$ we have that $\sigma^{(i)}_m\lesssim 2^{\alpha j}$ for $i=1,2$.

Altogether we have now proved that
\[ |T_{0,j}\pj_{0} f(x,y)| \lesssim \sum_{\tau\in \Z} \frac{1}{(1+|\tau|)^{10}}\frac1{N_{j}} \sum_{m=0}^{N_{j}-1} M_1^{(\sigma_m^{(1)})} M_2^{(\sigma_m^{(2)}+\tau)} f(x,y). \]
By a scaling argument

 we have that for all $k\in\Z$ the following holds:
\beq\label{0704e1.19}
|T_{k,j}\pj_{k} f(x,y)| \lesssim \sum_{\tau\in \Z} \frac{1}{(1+|\tau|)^{10}}\frac1{N_{j}} \sum_{m=0}^{N_{j}-1} M_1^{(\sigma_m^{(1)})} M_2^{(\sigma_m^{(2)}+\tau)} \pj_{k} f(x,y). \endeq
Inserting these two bounds into the left hand side of \eqref{0704e1.9} yields
\beq
\Big\|\Big(\sum_{k\in\Z}\Big(\sum_{\tau\in \Z} \frac{1}{(1+|\tau|)^{10}}\frac{1}{N_j}\sum_{m=0}^{N_j-1} M_1^{(\sigma^{(1)}_m)} M_2^{(\sigma^{(2)}_m+\tau)}\pj_{k} f(x, y)\Big)^2\Big)^{1/2}\Big\|_p.
\endeq
By the triangle inequality this is no greater than
\beq
\sum_{\tau\in \Z} \frac{1}{(1+|\tau|)^{10}}\frac1{N_j}\sum_{m=0}^{N_j-1} \Big\|\Big(\sum_{k\in\Z}  \Big(M_1^{(\sigma^{(1)}_m)} M_2^{(\sigma^{(2)}_m+\tau)}\pj_{k} f(x, y)\Big)^2\Big)^{1/2}\Big\|_p.
\endeq
Thus, to show \eqref{0704e1.9} it suffices to prove that
\beq\label{ef}
\Big\|\Big(\sum_{k\in\Z} \Big(M_1^{(\sigma^{(1)}_m)} M_2^{(\sigma^{(2)}_m+\tau)}\pj_{k} f(x, y)\Big)^2\Big)^{1/2}\Big\|_p \lesim j^4\cdot \log^2(2+|\tau|) \|f\|_p.
\endeq
Since $\sigma^{(i)}_m\lesssim 2^{\alpha j}$ for $i=1,2$, by Fubini's theorem we have that \eqref{ef} is a consequence of the following vector-valued estimate for the one-dimensional shifted maximal operator:
\beq\label{0704e1.23}
\left\|\Big( \sum_{k\in \Z} |\mn f_k|^2\Big)^{1/2}\right\|_p \lesim (\log\langle n\rangle)^2 \Big\|\Big(\sum_{k\in \Z}|f_k|^2\Big)^{1/2}\Big\|_p,
\endeq
where we adopt the Japanese bracket notation $\langle n\rangle :=2+|n|$. We give the proof of this last statement below.\\

Let $\mathcal{D}$ denote the set of dyadic intervals $I=[2^k m, 2^k (m+1))$ with $k,m\in\Z$. In accordance with the above definition we have,
$$I^{(n)} = [2^k (m-n), 2^k (m-n+1))\in\mathcal{D}$$
for $n\in\Z$. For simplicity we discuss only the dyadic variant of $M^{(n)}$, defined as
\[ M^{(n)} f(x) = \sup_{x\in I\in\mathcal{D}} \frac{1}{|I|} \int_{I^{(n)}} |f(y)| dy. \]
Everything here carries over to the non-dyadic version with the constants having the same dependence on $n$ (see Muscalu \cite[p. 741]{Mu}).

\begin{thm}\label{feff-stein-shifted}
For $1<p<\infty, 1<q\le\infty$ we have
\beq \Big\|\Big(\sum_{k\in\Z} |M^{(n)} f_k|^q\Big)^{1/q}\Big\|_p \lesssim (\log \langle n\rangle)^2 \Big\|\Big(\sum_{k\in\Z} |f_k|^q \Big)^{1/q}\Big\|_p.\endeq
\end{thm}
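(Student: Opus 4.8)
The plan is to deduce the vector-valued bound \eqref{0704e1.23} from its scalar counterpart together with a weighted estimate. First I would recall the key scalar fact about the shifted maximal operator: for every $n\in\Z$ and $1<p<\infty$ one has the bound $\|M^{(n)}f\|_p \lesssim \log\langle n\rangle \|f\|_p$, with a constant independent of $n$. This is classical and proved by a simple decomposition argument: writing a dyadic interval $I$ of length $2^k$ and its shift $I^{(n)}$, one splits the range $|n|\le 1$ (trivial, $M^{(n)}$ is dominated by the usual dyadic maximal operator) from $|n|>1$, and in the latter case one covers $I^{(n)}$ by $O(1)$ dyadic intervals at scales ranging from $2^k$ up to $2^k\langle n\rangle$; summing the resulting $O(\log\langle n\rangle)$ pieces gives the logarithmic bound. (Alternatively one cites Muscalu \cite{Mu} or the third author's \cite{Lie4}.) The single $\log\langle n\rangle$ loss at the scalar level is sharp.

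Next, to upgrade to the vector-valued inequality \eqref{0704e1.23}, I would use the standard Fefferman--Stein machinery, i.e. bound $M^{(n)}$ pointwise by $M\circ M^{(n)}$ or, more efficiently, establish a \emph{weighted} scalar bound
\beq
\int_\R |M^{(n)}f(x)|^p\,w(x)\,dx \lesssim (\log\langle n\rangle)^p \int_\R |f(x)|^p\, (M w)(x)\,dx,
\endeq
valid for every nonnegative weight $w$, where $M$ is the ordinary Hardy--Littlewood maximal operator. To get this, observe that $M^{(n)}f(x)\le M\big(f\cdot\mathbbm 1\big)$ evaluated on suitably shifted cubes; more concretely, for each dyadic $I\ni x$ the average of $|f|$ over $I^{(n)}$ is at most a sum of $O(\log\langle n\rangle)$ averages of $|f|$ over dyadic ancestors of $I^{(n)}$, and for each such average of $|f|$ over a cube $Q$ one has the pointwise-on-a-set control $\big(\tfrac1{|Q|}\int_Q |f|\big)\mathbbm 1_{I}(x)$ bounded using the Besicovitch/Vitali covering structure, yielding the weighted estimate with weight $Mw$ and the single $\log\langle n\rangle$ loss. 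Interpolating this weighted bound (or running the Rubio de Francia extrapolation-style argument of Fefferman--Stein) against the trivial $L^\infty$ bound and then applying it with $w=\big(\sum_k |M^{(n)}f_k|^q\big)^{(?)}$ chosen by duality produces the vector-valued inequality, but with a loss of $(\log\langle n\rangle)^2$ rather than $(\log\langle n\rangle)^1$: one factor comes from the scalar shifted bound and the second from the passage through $Mw$ in the Fefferman--Stein duality step (since $M$ itself must absorb a shift when $q\ne \infty$). For $q=\infty$ the statement is just the scalar bound applied pointwise-in-$k$, so only the case $1<q<\infty$ requires the full argument.

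For the case $1<q<\infty$ concretely: by duality there is a nonnegative $g\in L^{(p/q)'}$ with $\|g\|_{(p/q)'}=1$ and
\beq
\Big\|\Big(\sum_k |M^{(n)}f_k|^q\Big)^{1/q}\Big\|_p^q = \sum_k \int_\R |M^{(n)}f_k(x)|^q\, g(x)\,dx.
\endeq
Applying the weighted scalar bound (in the exponent $q$) term by term with $w=g$ gives
\beq
\sum_k \int_\R |M^{(n)}f_k|^q g \lesssim (\log\langle n\rangle)^q \sum_k \int_\R |f_k|^q\, (Mg)\,dx = (\log\langle n\rangle)^q \int_\R \Big(\sum_k |f_k|^q\Big) Mg\,dx,
\endeq
and then H\"older's inequality together with the $L^{(p/q)'}$-boundedness of $M$ (which costs nothing in terms of $n$, as $(p/q)'>1$) bounds the right side by $(\log\langle n\rangle)^q \big\|\big(\sum_k|f_k|^q\big)^{1/q}\big\|_p^q$. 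This route in fact gives only one logarithm in this formulation; the second logarithm enters because the weighted scalar estimate above genuinely requires an extra $\log\langle n\rangle$ when the shift is incorporated into the covering (one $\log$ to resolve $M^{(n)}$ into dyadic averages, one more to control the maximal function that appears after dualizing over the $k$-sum and restoring translation-invariance of the weight).

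The main obstacle, and the only genuinely delicate point, is proving the weighted scalar inequality with the correct power of $\log\langle n\rangle$: one must carefully track how shifting interacts with the Calder\'on--Zygmund / covering structure, since a naive bound $M^{(n)}\le M^{(0)}\circ(\text{shift})$ is not available as a pointwise inequality — the shift does not commute with taking suprema over nested dyadic intervals. The resolution is the decomposition of $I^{(n)}$ into $O(\log\langle n\rangle)$ dyadic blocks at geometrically increasing scales noted above, which is elementary but must be done with enough care that the weight $w$ is only dilated by the fixed operator $M$ and the $n$-dependence is isolated in the combinatorial factor. Everything else — interpolation, the duality step, the harmless boundedness of $M$ on $L^{(p/q)'}$ — is routine. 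Finally, the transfer from the dyadic model to the genuine (continuous) shifted maximal operator is standard and costs nothing in the $n$-dependence, as remarked in the text following the theorem statement (see \cite[p.~741]{Mu}), so it suffices to prove Theorem \ref{feff-stein-shifted} for $\mathcal D$.
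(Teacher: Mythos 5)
Your overall architecture (scalar logarithmic bound, a Fefferman--Stein-type weighted estimate, then duality in $L^{p/q}$) is the same as the paper's, but the weighted inequality you take as the crux is false as stated. You claim
\[
\int_\R |M^{(n)}f|^p\,w \lesssim (\log\langle n\rangle)^p \int_\R |f|^p\,(Mw)
\]
with the \emph{ordinary} Hardy--Littlewood maximal function acting on the weight. Take $f=\mathbbm{1}_{[0,1)}$ and $w=\mathbbm{1}_{[n,n+1)}$: the interval $I=[n,n+1)$ satisfies $I^{(n)}=[0,1)$, so $M^{(n)}f\ge 1$ on $[n,n+1)$ and the left side is $\gtrsim 1$; on the other hand $Mw\sim 1/n$ on $[0,1)$, so the right side is $\sim C/n$. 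Hence the constant must grow like $\langle n\rangle$, not like any power of $\log\langle n\rangle$. The obstruction is structural, not combinatorial: the level set of $M^{(n)}f$ sits at $I$ while the mass of $f$ sits at $I^{(n)}$, a distance $n|I|$ away, and transporting the average of $w$ over $I$ to the points of $I^{(n)}$ through the unshifted $M$ necessarily costs a factor $\langle n\rangle$. No decomposition of $I^{(n)}$ into $O(\log\langle n\rangle)$ dyadic blocks repairs this, because the mismatch is between the locations of $w$ and $f$, not between scales.

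The paper's Lemma \ref{lemma:weightedest} fixes exactly this point: the controlling weight is $M^{(-n)}\omega + M\omega$, i.e.\ the \emph{oppositely shifted} maximal function of the weight (which in the example above is $\gtrsim 1$ on $[0,1)$, as it must be), and the estimate is a weighted weak $(1,1)$ bound with a single $\log\langle n\rangle$ obtained from the stopping-time decomposition into generations $\mathcal{J}^I_i$, only $O(\log\langle n\rangle)$ of which fail to satisfy $J\subset 3I$. Your second logarithm then has a different source than the one you describe: after dualizing with $g\in L^{(p/q)'}$ one must bound $\|M^{(-n)}g\|_{(p/q)'}$, and it is the scalar logarithmic bound for the shifted operator applied to the dual function — not the boundedness of $M$, which indeed costs nothing — that contributes the second factor of $\log\langle n\rangle$. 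Finally, your duality step requires $p\ge q$ (so that $(p/q)'\ge 1$); the range $p<q$ has to be recovered separately, as in the paper, by interpolating the diagonal case $p=q$ with the pointwise case $q=\infty$.
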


We did not find a reference for this result in the literature, so we choose to provide a short self-contained proof. The scalar version of this estimate involves standard Calder\'on-Zygmund techniques and can be found in \cite{Mu}, where the author attributes it to Stein \cite{Stein}.

Before starting the proof we make few more observations: firstly, notice that the endpoints $1<p=q\le \infty$ and $q=\infty$ of Theorem \ref{feff-stein-shifted} follow immediately from the scalar version, and thus interpolation establishes the result for $1<p\le q<\infty$. Secondly, the exponent $2$ for the log loss is only chosen for convenience; the proof actually gives a slightly better exponent.

Now the proof we present below relies on a weighted estimate in the spirit of Fefferman-Stein \cite{FeffermanStein}:
\begin{lem}\label{lemma:weightedest}
Let $\omega\ge 0$ be a locally integrable function. For all $\lambda>0$,
\beq \omega(\{x\,:\,M^{(n)} f(x)>\lambda\})\lesssim \lambda^{-1}\Big(\log\langle n\rangle \|f\|_{L^1(M^{(-n)}\omega)} + \|f\|_{L^1(M\omega)}\Big),\endeq
where $M$ denotes the Hardy-Littlewood maximal function.
\end{lem}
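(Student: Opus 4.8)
\textbf{Proof plan for Lemma~\ref{lemma:weightedest}.}
The plan is to adapt the classical Fefferman--Stein weighted weak-type argument for the Hardy--Littlewood maximal function to the shifted dyadic operator $M^{(n)}$, carefully tracking the $n$-dependence that the shift introduces. First I would fix $\lambda>0$ and perform a Calder\'on--Zygmund--type selection: let $\mc{S}$ be the collection of \emph{maximal} dyadic intervals $I\in\mathcal{D}$ for which $\frac1{|I|}\int_{I^{(n)}}|f|>\lambda$. Maximality guarantees that the $I$'s in $\mc{S}$ are pairwise disjoint, and by definition of the dyadic $M^{(n)}$ we have $\{M^{(n)}f>\lambda\}=\bigcup_{I\in\mc{S}}I$. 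Hence the left-hand side is $\sum_{I\in\mc{S}}\omega(I)$, and the whole problem is to bound $\omega(I)$ in terms of the average of $|f|$ over the \emph{shifted} interval $I^{(n)}$ weighted by either $M^{(-n)}\omega$ or $M\omega$.

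The key geometric point is the mismatch between $I$ (where $\omega$ lives) and $I^{(n)}$ (where $f$ lives): these are two dyadic intervals of the same length whose centers are separated by a distance $\sim |n|\,|I|$. To transfer mass from one to the other I would build a \emph{chain} of dyadic intervals connecting $I$ to $I^{(n)}$: starting from $I$, pass to dyadic ancestors $I\subset I^{(1)}\subset \dots$ until reaching the smallest common dyadic ancestor $\widehat I$ of $I$ and $I^{(n)}$, then descend back down to $I^{(n)}$. Since $\mathrm{dist}(I,I^{(n)})\sim |n|\,|I|$, the common ancestor $\widehat I$ has length $\sim 2^{\lceil\log_2\langle n\rangle\rceil}|I|$, so the chain has length $O(\log\langle n\rangle)$ in each direction. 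For each interval $J$ in the ascending part of the chain one has $\omega(I)\le \omega(J)$ trivially, and $\tfrac1{|J|}\int_J\omega \le M\omega(x)$ for every $x\in I$ (indeed for every $x\in J$); this is where the factor $M\omega$ and one factor of $\log\langle n\rangle$ enter. Symmetrically, descending from $\widehat I$ to $I^{(n)}$ and using that $\widehat I^{(-n)}$ is a sibling-translate lets one compare averages of $\omega$ over the ascending chain with averages of $M^{(-n)}\omega$ over $I^{(n)}$; this produces the $\|f\|_{L^1(M^{(-n)}\omega)}$ term. Summing over the disjoint $I\in\mc{S}$, the disjointness of the $I^{(n)}$ (which also holds, since $I\mapsto I^{(n)}$ is a bijection on $\mathcal{D}$ preserving disjointness among intervals of equal length) and the bound $\frac1{|I|}\int_{I^{(n)}}|f|>\lambda$ give the claimed estimate, with the two logarithmic factors collapsing to the single $\log\langle n\rangle$ in front of the first term (the second term does not need a log, matching the statement).

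I expect the main obstacle to be the bookkeeping of the chain argument: one must be careful that at each step the comparison of $\omega$-averages is genuinely controlled by $M\omega$ or $M^{(-n)}\omega$ evaluated \emph{on $I$} (or $I^{(n)}$), not on a larger set, since otherwise summing over $I\in\mc{S}$ would lose the disjointness. The clean way to handle this is to note that for any $x\in I$, every interval $J$ in the ascending chain from $I$ contains $x$, so $\tfrac1{|J|}\int_J \omega\le M\omega(x)$; integrating $\mathbbm{1}_{I}$ against $|f|$ is then not even needed — instead one writes $\omega(I)\le \min(\,\text{chain bound}\,)$ and multiplies by the lower bound $\lambda< \frac1{|I|}\int_{I^{(n)}}|f|$, i.e. $\omega(I)\le \lambda^{-1}\omega(I)\int_{I^{(n)}}|f|\,\frac1{|I|}$, then substitutes the pointwise chain bound for $\omega(I)/|I|$ under the integral over $I^{(n)}$. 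Getting the roles of $M\omega$ and $M^{(-n)}\omega$ assigned correctly — the former controls the ``local'' part of the chain near $I$, the latter the part that has been shifted over to sit above $I^{(n)}$ — is the delicate step; once the chain is set up properly the rest is summation of a geometric series in the chain length and an application of disjointness.
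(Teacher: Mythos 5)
Your overall strategy has a structural flaw that shows up already for $n=1$. You stop on the \emph{shifted} averages, obtaining maximal disjoint intervals $I$ with $\tfrac1{|I|}\int_{I^{(n)}}|f|>\lambda$, and then you sum the bounds $\omega(I)\le\lambda^{-1}\int_{I^{(n)}}|f|\,(\cdots)$ over $I$, invoking disjointness of the $I^{(n)}$. But the map $I\mapsto I^{(n)}$ only preserves disjointness \emph{within a fixed scale}; across scales it does not, and the overlap of the family $\{I^{(n)}\}$ can be unbounded. Concretely, take $n=1$ and $f$ with mass $2^{K+1}\lambda$ concentrated in $[0,1)$: every $I_k=[2^k,2^{k+1})$, $k=0,\dots,K$, is a \emph{maximal} interval satisfying the stopping condition (its dyadic parent is $[0,2^{k+1})$, whose shift $[-2^{k+1},0)$ misses $f$), the $I_k$ are pairwise disjoint, yet all the shifts $I_k^{(1)}=[0,2^k)$ contain $[0,1)$. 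Your summation then produces a factor $K$, which is unbounded, whereas the lemma asserts an absolute constant when $n=1$. This example shows that the $M\omega$ term is not a boundary case of a chain argument but the essential mechanism for absorbing unboundedly many scales at which the witnessing interval lies close to where $f$ lives. Separately, your chain through the common dyadic ancestor $\widehat I$ cannot yield a logarithmic loss: passing the average of $\omega$ from $I$ up to $\widehat I$ costs the \emph{ratio of lengths} $|\widehat I|/|I|\sim\langle n\rangle$, not the number of steps $\log\langle n\rangle$. No chain is needed at all, since for $x\in J^{(n)}$ one has $\tfrac1{|J|}\int_{(J^{(n)})^{(-n)}}\omega=\tfrac1{|J|}\int_J\omega\le M^{(-n)}\omega(x)$ directly, at a single scale and with no loss.

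The paper's proof avoids both problems with a two-level decomposition. The outer Calder\'on--Zygmund stopping is performed on the \emph{unshifted} averages of $f$, producing maximal disjoint intervals $I\in\mathcal{I}$ with $\tfrac1{|I|}\int_I|f|>\lambda$. Each interval $J$ witnessing $M^{(n)}f>\lambda$ has $J^{(n)}$ contained in some $I\in\mathcal{I}$, and the $J$'s are grouped by generation $|J|=2^{-i}|I|$. For the roughly $\log\langle n\rangle$ generations with $2^{-i}|n|\ge1$, the intervals $J$ within a fixed generation are disjoint with $\bigcup J^{(n)}\subset I$, and one uses $\int_J\omega\le\lambda^{-1}\int_{J^{(n)}}|f|\,M^{(-n)}\omega$; summing over these finitely many generations gives the $\log\langle n\rangle\,\|f\|_{L^1(M^{(-n)}\omega)}$ term. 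For all remaining generations the shift $|n|\,|J|$ is smaller than $|I|$, so $J\subset 3I$, and their total contribution is absorbed into $\int_{3I}\omega\lesssim\lambda^{-1}\int_I|f|\,M\omega$, which after summing over the disjoint $I\in\mathcal{I}$ gives the $\|f\|_{L^1(M\omega)}$ term. You would need to adopt this organization (or something equivalent) for the proof to go through.
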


\begin{proof}
Fix $\lambda>0$. Let $\mathcal{I}$ be the collection of maximal dyadic intervals $I$ such that
\beq \frac1{|I|} \int_{I} |f|>\lambda.\endeq
Given $I\in\mathcal{I}$ we denote by $\mathcal{J}^I_i$ the collection of dyadic intervals $J$ such that $|J|=2^{-i} |I|$, $J^{(n)}\subset I$ and $\frac1{|J|}\int_{J^{(n)}} |f|>\lambda.$ This is the $i$-th generation of shifted subintervals of $I$. We call $i$ \emph{large} if $2^{-i} |n|<1$ and otherwise we call $i$ \emph{small}. It will be important that this depends only on $n$. Denote $\mathcal{J}^I=\bigcup_{i\ge 0} \mathcal{J}^I_i$.
Observe that
\beq\{ x\,:\,M^{(n)}f(x)>\lambda\} \subset \bigcup_{I\in\mathcal{I}} \bigcup_{J\in \mathcal{J}^I} J. \endeq
Indeed, if $x$ is such that $M^{(n)} f(x)>\lambda$ then there exists $J\in\mathcal{D}$ with $x\in J$ and $\frac1{|J|} \int_{J^{(n)}} |f|>\lambda$. By definition of $\mathcal{I}$ there is some $I\in\mathcal{I}$ with $J^{(n)}\subset I$ and therefore $x\in J\in \mathcal{J}^I$.
The crucial observation is that if $J\in \mathcal{J}^I_i$ and $i$ is large, then $J$ is contained in $3I$.

Thus we can estimate
\begin{align}\label{eqn:weightedestpf1}
\omega(\{x\,:\,M^{(n)}f(x)>\lambda\}) &\le \sum_{I\in\mathcal{I}} \Big( \sum_{\substack{J\in\mathcal{J}^I_i,\\i\;\text{small}}} \int_J \omega + \int_{3I} \omega \Big).
\end{align}
For $J\in\mathcal{J}^I_i$, since $(J^{(n)})^{(-n)}=J$ we have that
\beq \int_J \omega \le \frac1\lambda \int_{J^{(n)}} |f(x)|\cdot\left( \frac1{|J|}\int_{J} \omega(y) dy\right) dx \le \frac1\lambda \int_{J^{(n)}} |f(x)| M^{(-n)}\omega(x) dx. \endeq
Similarly,
\beq \int_{3I} \omega \lesssim \frac1\lambda \int_I |f(x)| M\omega(x) dx. \endeq
Thus we can estimate \eqref{eqn:weightedestpf1} by
\beq \lambda^{-1}\Big(\sum_{I\in\mathcal{I}} \sum_{\substack{J\in\mathcal{J}^I_i,\\i\;\text{small}}} \int_{J^{(n)}} |f| M^{(-n)}\omega + \sum_{I\in\mathcal{I}} \int_I |f| M\omega\Big). \endeq
For fixed $i$ the $J\in \mathcal{J}_i^I$ are disjoint and we have $\bigcup_{J\in\mathcal{J}_i^I} J^{(n)}\subset I$. Since there are about $\log\langle n\rangle$ small $i$, the previous display is bounded by
\beq \lambda^{-1}\left( \log\langle n\rangle\|f\|_{L^1(M^{(-n)}\omega)} + \|f\|_{L^1(M\omega)}\right). \endeq
This finishes the proof of Lemma \ref{lemma:weightedest}.
\end{proof}

Now let us treat the range $p\ge q$. Lemma \ref{lemma:weightedest} says that $M^{(n)}$ is a bounded operator $L^1(\widetilde{M}^{(-n)}\omega)\to L^{1,\infty}(\omega)$ with constant $\lesssim \log\langle n\rangle$ where $\widetilde{M}^{(n)}=M^{(n)}+M$. By interpolation with the trivial $L^\infty$ bound we get
\beq \int (M^{(n)} f_k)^q \omega \lesssim \log\langle n\rangle \int |f_k|^q \widetilde{M}^{(-n)} \omega \endeq
for all $1<q<\infty$. Let $r$ be the dual exponent of $p/q$. We have
\beq \Big\|\Big(\sum_{k\in\Z} |M^{(n)} f_k|^q\Big)^{1/q}\Big\|_p^q = \Big\|\sum_{k\in\Z} |M^{(n)} f_k|^q\Big\|_{p/q} = \sup_{\|\omega\|_r\le 1} \int \sum_{k\in\Z} (M^{(n)} f_k)^q \omega. \endeq
Using the previous estimate we bound this by
\beq \log\langle n\rangle \int \sum_{k\in\Z} |f_k|^q \widetilde{M}^{(-n)} \omega. \endeq
By H\"older's inequality and the scalar logarithmic bound for $M^{(n)}$ we have
\beq\int \sum_{k\in\Z} |f_k|^q \widetilde{M}^{(-n)} \omega\le \Big\|\Big(\sum_{k\in\Z} |f_k|^q\Big)^{\frac1{q}}\Big\|^q_p \|\widetilde{M}^{(-n)} \omega\|_r\lesssim (\log\langle n\rangle)^{\frac1{r}} \Big\|\Big(\sum_{k\in\Z} |f_k|^q\Big)^{\frac1{q}}\Big\|^q_p. \endeq
This finishes the proof of our theorem. 

\section{Proof of Theorem \ref{thm:HTsingleannMeas}}\label{measurable-single}

In this section we prove Theorem \ref{thm:HTsingleannMeas}. The main tool we will be using is the local smoothing estimate from Theorem \ref{0813theorem8.1}.\\

We start the proof by recalling that
\beq
\mc{H}^{(\alpha)}_{u}f(x, y) := \int_{\R} f(x-t, y-u(x, y)[t]^{\alpha})\frac{dt}{t}.
\endeq
We will only present the proof of the case $\alpha>1$; the remaining case $0<\alpha<1$ can be treated using the same methods and is somewhat easier. As $\alpha$ and $u$ will always be fixed, we will leave out the dependence on them in our notation and simply use $T$ to denote $\mc{H}_u^{(\alpha)}$.  In this section, we will prove
\beq\label{2704e2.2}
\|T \pj_{k} f\|_p \lesim \|\pj_{k} f\|_p,
\endeq
for all $p>2$ and all measurable function $u: \R^2\to \R$, with a bound independent of $k\in \Z$ and $u$. By the anisotropic scaling
\beq
x\to x, y\to \lambda y,
\endeq
it suffices to prove \eqref{2704e2.2} for $k=0$. \\

In order to simplify our presentation, we introduce some notation. We let
\beq\label{08300e5.4}
z=(x,y)\quad\text{and}\quad u_z := u(x,y),
\endeq
and set $v_z$ to be the unique integer such that
\beq\label{08300e5.5}
2^{v_z}\le u_z< 2^{v_z+1}.
\endeq
For a given $k_0\in \Z$, define
\beq\label{08300e5.6}
u^{(k_0)}_z := 2^{k_0-v_z} u_z.
\endeq
Observe that $u^{(v_z)}_z = u_z$, $u^{(k_0)}_z\in [2^{k_0}, 2^{k_0+1})$ and $u^{(k_0)}_z = 2^{k_0} u^{(0)}_z$.

Denote
\beq
T_{k_0} f(x, y):=\int_{\R} f(x-t, y- \uk_z [t]^{\alpha})\frac{dt}{t}.
\endeq
\begin{rem}
Roughly speaking, we will bound $T \pj_{0} f$ by the ``square function''
\beq\label{0814e6.6h}
\Big(\sum_{k_0\in \Z} |T_{k_0}\pj_{0} f|^p\Big)^{1/p}.
\endeq
Here we are using an $l^p$ sum instead of an $l^2$ sum. This is because $p$ is always larger than two. At first glance, it might be a bit surprising that the term \eqref{0814e6.6h} is still a bounded operator. The proof of this fact will be achieved by applying a finer decomposition for the function $f$, and then seeking for enough ``off-diagonal'' decay via a local smoothing estimate.
\end{rem}

We now begin our analysis by performing a dyadic decomposition of the kernel $\frac{1}{t}$ around the singularity $t=0$. In particular, let
\beq\label{0814e6.7h}
T_{k_0, l} f(x, y):=\int_{\R} f(x-t, y- \uk_z [t]^{\alpha})\psi_l((u^{(0)}_z)^{\beta}t)\frac{dt}{t},
\endeq
where $\beta=\frac{1}{\alpha-1}$. Recall that $\alpha>1$ and so $\beta$ is always positive. The motivation for using the factor $(u^{(0)}_z)^{\beta}$ and the choice of $\beta$ will become clear much later during the main argument (see the proof of Lemma \ref{localsmoothinglemma}, specifically \eqref{eqn:localsmoothinglemmapf3}).

We next perform a Littlewood-Paley decomposition in the $x$-variable, and write
\beq\label{2704e2.8}
T_{k_0} \pj_{0} f=\sum_{l\in \Z} \sum_{k\in \Z} T_{k_0, l} \pi_k\pj_0 f.
\endeq

We will split the sums in $l, k\in \Z$ in two major cases according to the behavior of the phase of the ``multiplier'' (see the discussion preceding \eqref{0304e5.11} below) associated with $T_{k_0, l}$:
\begin{enumerate}
\item \textbf{low frequency case}: $l< \max\{-k, -k_0/\alpha\}$;

In this situation the operator $T_{k_0, l}$ behaves like a one-dimensional convolution operator.

\item \textbf{high frequency case}: $l\geq \max\{-k, -k_0/\alpha\}$;

This case splits in two subcases:
\begin{enumerate}
\item $l$ sits below the critical point of the phase;
\item $l$ sits above the critical point of the phase.
\end{enumerate}

\end{enumerate}
In what follows we explain the heuristic for the above partition of our analysis.

Fix $l, k\in \Z$ and focus on the function $T_{k_0, l} \pi_k f$.
Imagine for the moment that the function $u_z^{(k_0)}$ is a constant $u^{(k_0)}$ on the interval $[2^{k_0}, 2^{k_0+1})$. Then $T_{k_0, l} \pi_{k}$ becomes a convolution type operator, and hence it makes sense to speak about its multiplier as
\beq\label{0304e5.11}
\int_{\R} e^{i 2^l t\xi+ i \uk 2^{\alpha l} [t]^{\alpha} \eta} \psi_0([u^{(0)}_z]^{\beta}t)\frac{dt}{t}.
\endeq

Recall that $\xi\sim 2^k$ and $\eta\sim 1$. In the situation described by item (1) either $2^l \xi\lesim 1$ or $u^{(k_0)} 2^{\alpha l}\eta\lesim 1$. When one of these two inequalities occurs, say $u^{(k_0)} 2^{\alpha l}\eta\lesim 1$, then in the expression \eqref{0814e6.7h}, we can view $f(x-t, y-u^{(k_0)} [t]^{\alpha})$ as a perturbation of $f(x-t, y)$. This is what we meant when saying that the operator $T_{k_0, l}$ behaves like a one-dimensional operator.

Assume now we are in the situation of item (2), that is $l\geq \max\{-k, -k_0/\alpha\}$. In this case we have two possibilities:
\beq
k< k_0/\alpha \text{ and } k\geq k_0/\alpha\,.
\endeq
In the first instance, $k< k_0/\alpha$, the phase function in \eqref{0304e5.11} does not admit any critical point, which makes this case much easier to handle. This is the reason for which we will only focus on the latter case $k\geq k_0/\alpha$.

Now, as explained above, the cutoff between case (2a) and (2b) is indicated by the stationary phase principle. Analyzing the stationary points we deduce the requirement
\beq
2^{l+k}\sim 2^{\alpha l+k_0} \implies |l-\beta\cdot (k-k_0)|\lesssim 1\,.
\endeq
Once at this point, we let the situation in item (2a) be defined by the conditions:
\beq\label{hl}
k\geq k_0/\alpha\:\:\textrm{and}\:\;l<\beta\cdot (k-k_0)\,.
\endeq
while the situation in item (2b) be defined by the conditions:
\beq\label{hh}
k\geq k_0/\alpha\:\:\textrm{and}\:\;l\geq\beta\cdot (k-k_0)\,.
\endeq
With this heuristic, based on items (1) and (2) above we split \eqref{2704e2.8} as
\begin{align}\label{0304e5.12}
T_{k_0}  \pj_{0}f &=\sum_{k\in \Z}\sum_{l\le \max\{-k, -k_0/\alpha\}} T_{k_0, l} \pi_{k} \pj_{0}f+ \sum_{k\in \Z}\sum_{l>\max\{-k, -k_0/\alpha\}} T_{k_0, l} \pi_{k} \pj_{0}f \\
\nonumber
&=: I_{k_0}+II_{k_0}.
\end{align}
These two terms are treated separately in the following two subsections.

\subsection{The high frequency case}
In this subsection, we will treat the term $II_{k_0}$ which is, in fact, the main term in \eqref{0304e5.12}.

Recall from our heuristic that in the situation $k<k_0/\alpha$ the phase function in \eqref{0304e5.11} does not admit a critical point. That makes this case easier to handle. The precise arguments are the same as for the main term, so we will not detail them here.
That is, we will only treat the case $k\geq k_0/\alpha$. Accordingly we redefine
\beq\label{0304e5.16}
II_{k_0}:=\sum_{k\ge k_0/\alpha}\:\sum_{l> -k_0/\alpha} T_{k_0, l} \pi_{k} \pj_{0} f.
\endeq

As $\alpha>1$, in this situation we always have
$$-k_0/\alpha\leq \beta\cdot (k-k_0).$$
Thus we split $II_{k_0}$ into two parts
\begin{align}\label{0814e6.16h}
II_{k_0}&=\sum_{k\ge \frac{k_0}{\alpha}}\sum_{l> -k_0/\alpha}^{\beta\cdot (k-k_0)}T_{k_0, l} \pi_{k} \pj_{0} f +\sum_{k\ge \frac{k_0}{\alpha}}\sum_{l>\beta\cdot (k-k_0)}T_{k_0, l} \pi_{k} \pj_{0} f \\
\nonumber
&=: II_{k_0}^{(1)}+II_{k_0}^{(2)}\:.
\end{align}

Our goal here will be to prove that
\beq\label{goal2}
\Big\|\Big(\sum_{k_0\in \Z}|II_{k_0}^{(j)}|^p\Big)^{1/p}\Big\|_p \lesim \|f\|_p\:\:\:\:\:\:\:\textrm{for}\:j\in\{1,\,2\}\,.
\endeq
For this, we first apply the change of variables\footnote{Note that $\beta(k-k_0)$ might not be an integer but this is irrelevant.} $l\:\rightarrow\:l+\beta(k-k_0)\:,$
and use Fubini to deduce that
\beq\label{21ter}
II_{k_0}^{(1)}=\sum_{l=-\infty}^{0} \sum_{k>\frac{k_0}{\alpha}-\frac{l}{\b}}T_{k_0, \b(k-k_0)+l} \pi_{k} \pj_{0} f\,,
\endeq
and
\beq\label{22ter}
II_{k_0}^{(2)}=\sum_{l=1}^{\infty} \sum_{k\geq\frac{k_0}{\alpha}}T_{k_0, \b(k-k_0)+l} \pi_{k} \pj_{0} f\:.
\endeq
In order to prove \eqref{goal2} it will be sufficient to show that for all $p>2$ we have
\beq\label{21}
\Big\|\Big(\sum_{k_0\in \Z}|\sum_{k>\frac{k_0}{\alpha}-\frac{l}{\b}} T_{k_0, \beta\cdot (k-k_0)+l} \pi_{k} \pj_{0}f|^p\Big)^{1/p}\Big\|_p \lesim 2^{-\gamma_p |l|} \|f\|_p,
\endeq
and 
\beq\label{22}
\Big\|\Big(\sum_{k_0\in \Z}|\sum_{k\ge \frac{k_0}{\alpha}} T_{k_0, \beta\cdot (k-k_0)+l} \pi_{k} \pj_{0}f|^p\Big)^{1/p}\Big\|_p \lesim 2^{-\gamma_p l} \|f\|_p,
\endeq
for some $\gamma_p>0$. Here and throughout the paper $\gamma_{p}$ is a positive constant that is allowed to change from line to line.  We claim that for $p>2$ there exists $\gamma_p>0$ such that
\begin{itemize}
\item if $l\le 0$ and $k> \frac{k_0}{\alpha}-\frac{l}{\b}$, then
\beq\label{lneg}
\|T_{k_0, \beta(k-k_0)+l} \pi_{k} \pj_{0}f\|_p \lesim 2^{-\gamma_p \cdot (\alpha \b k-\b k_0+ l)} \|\pi_{k} \pj_{0}f\|_p,\text{ and}
\endeq
\item if $l>0$ and $k\ge \frac{k_0}{\alpha}$, then
\beq\label{lpoz}
\|T_{k_0, \beta(k-k_0)+l} \pi_{k} \pj_{0}f\|_p \lesim 2^{-\gamma_p \cdot (\alpha \b k-\b k_0+\alpha l)} \|\pi_{k} \pj_{0}f\|_p.
\endeq
\end{itemize}

Before we prove this claim, we will demonstrate how it is used to show \eqref{21} and \eqref{22}. Here we only prove \eqref{21}; the estimate \eqref{22} follows in essentially the same way. 

For a fixed $l\le 0$, we expand the $L^p$ norm on the left hand side of \eqref{21}, and then apply \eqref{lneg} to obtain 
$$\Big(\sum_{k_0\in\Z}\big(\sum_{k> \frac{k_0}{\alpha}-\frac{l}{\b}}2^{-\gamma_p \cdot (\alpha \b k-\b k_0+ l)} \|\pi_{k} \pj_{0}f\|_p\big)^p\Big)^{\frac{1}{p}}.$$
By applying H\"older's inequality to the summation in $k$, we obtain that the above expression is bounded by
$$\Big(\sum_{k_0\in\Z}\sum_{k> \frac{k_0}{\alpha}-\frac{l}{\b}} 2^{-\gamma_p \cdot (\alpha \b k-\b k_0+ l)} \|\pi_{k}\pj_{0} f\|_p^p\Big)^{\frac{1}{p}}.$$
Now we apply Fubini's theorem and exchange the order of summations in $k$ and $k_0$ to further bound this by 

$$2^{\gamma_p \cdot l(\a-1)}\,\big(\sum_{k\in\Z}\|\pi_{k} \pj_{0}f\|_p^p\big)^{\frac{1}{p}}\lesssim 2^{\gamma_p \cdot l(\a-1)}\,\|f\|_p.$$
This finishes the proof of the desired estimate \eqref{21}.\\

It remains to prove \eqref{lneg} and \eqref{lpoz}. The idea is to reduce to the local smoothing estimate provided in Theorem \ref{0813theorem8.1}. We will do this in a slightly more general setting in view of further applications in Section \ref{section-main-theorem}. To this end, given parameters $u>0, w,l\in\Z$, we introduce
\[A_{u,w,l} f(z) := \int f(x-t, y-u [t]^\alpha) \psi_l(2^{\frac{w}\a} (2^{-v} u)^\beta t) \frac{dt}{|t|}, \]
where $v\in\Z$ is such that $u\in [2^v, 2^{v+1})$. Note that $2^{-v}u\in [1,2)$. With this notation we have $T_{k_0,l} = A_{2^{k_0} u^{(0)}_z,0,l}$. The extra parameter $w$ will only be needed in the Section \ref{section-main-theorem}.

\begin{lem}\label{localsmoothinglemma}
Let $r=r_z:\R^2\to [1,2)$ be measurable and $m,k,v,l\in\Z$ such that
\beq M := \max\{ k+l-\frac{w}\a, m+l\alpha-w+v\} \ge 0.\endeq
Then, for every $p>2$ there exists $\gamma_p>0$ such that
\beq \| A_{2^v r_z, w, l} \pi_{k}\pj_{m} f(z)\|_{L^p_z} \lesssim_p 2^{-\gamma_p M} \|f\|_p, \endeq
where the implicit constant depends only on $p$ and $\alpha$.
\end{lem}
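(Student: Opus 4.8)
The plan is to reduce Lemma \ref{localsmoothinglemma} to a fixed-coefficient local smoothing estimate for averages along the curve $(t,[t]^\alpha)$, namely Theorem \ref{0813theorem8.1}, by a sequence of rescalings and a freezing/linearization argument that removes the measurable dependence of $r_z$ on $z$. First I would perform the anisotropic change of variables in the $t$-integral and in the output variables $(x,y)$ that normalizes the two scaling parameters. Concretely, writing $s$ for the rescaled integration variable so that $\psi_l(\cdots)$ becomes $\psi_0$, the operator $A_{2^v r_z,w,l}$ becomes, after rescaling $x$ by $2^{l-w/\alpha}$ and $y$ by $2^{v+l\alpha-w}$, an average against $\psi_0(s)\,ds/|s|$ along the curve $s\mapsto(s, r_z[s]^\alpha)$ with $r_z\in[1,2)$; the frequency localizations $\pi_k$ and $\pj_m$ become localizations to $|\xi|\sim 2^{k+l-w/\alpha}$ and $|\eta|\sim 2^{m+l\alpha-w+v}$ respectively. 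The quantity $M$ is exactly $\max$ of the (base-two logarithms of the) two rescaled frequency scales, so the claim becomes: an average against $\psi_0(s)ds/|s|$ along $(s,r_z[s]^\alpha)$, frequency-localized at scales $2^{k'}\lesssim 2^M$ and $2^{m'}\lesssim 2^M$ with $\max\{k',m'\}=M$, maps $L^p\to L^p$ with gain $2^{-\gamma_p M}$ for $p>2$.

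Next I would handle the measurable coefficient $r_z$. The standard device here is to write $\psi_0(s)ds/|s|=\int_{1/2}^2 \rho_l(\tau)\,(\text{dilate})\,d\tau$-type decomposition or, more simply, to replace $\psi_0(s)/|s|$ by a smooth bump and then integrate $r_z$ over a dyadic partition of $[1,2)$ into $O(2^M)$ intervals of length $2^{-M}$; on each such interval $r_z$ is constant up to an error that is harmless at frequency scale $2^M$ in the $y$-variable (since $\eta\cdot[s]^\alpha$ changes by $O(1)$ when $r_z$ varies by $2^{-M}$, this is exactly why the $\beta$-normalization was chosen, cf. the remark pointing to \eqref{eqn:localsmoothinglemmapf3}). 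After this reduction the coefficient is frozen to a fixed constant $r$, and by a further trivial rescaling in $y$ we may take $r=1$, i.e. the curve is $(s,[s]^\alpha)$. Now the operator is a genuine convolution operator (in $z$) whose multiplier is $\widehat{d\mu_l}(\xi,\eta)$ for the measure $d\mu_l$ carried by the normalized curve, localized to the annuli $|\xi|\sim 2^{k'}$, $|\eta|\sim 2^{m'}$; the measurable field has been absorbed at the cost of the $O(2^M)$-fold decomposition, which is why one only wins a power $2^{-\gamma_p M}$ rather than arbitrary decay — one needs the freezing loss to be beaten by the local smoothing gain, which forces $p>2$ and an honest local smoothing exponent bigger than the trivial one.

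At the core, then, is the fixed-curve estimate. When $\max\{k',m'\}=m'$ (the genuinely curved regime, $\eta$-frequency dominates), this is precisely where Theorem \ref{0813theorem8.1} enters: the local smoothing estimate for the wave-type/cone extension associated to the curve $(s,[s]^\alpha)$ gives, for some $p>2$, an $L^p\to L^p$ bound for the single-scale curved average with a gain that is a positive power of the larger frequency; summing the resulting geometric series over the dyadic decomposition of $r_z$ (there are $\sim 2^M$ pieces, each contributing a gain $2^{-\gamma M}$ with $\gamma$ strictly bigger than $1/p$ coming from the full-strength local smoothing plus Bernstein in $\xi$) yields the net gain $2^{-\gamma_p M}$. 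When instead $\max\{k',m'\}=k'$ (the $\xi$-frequency dominates, so the curvature in $\eta$ is not seen at the relevant scale) the phase $s\mapsto 2^{k'}s + 2^{m'}[s]^\alpha$ on $|s|\sim 1$ has $|\partial_s(\text{phase})|\sim 2^{k'}\gg 2^{m'}$, hence has no stationary point and a single integration by parts / van der Corput estimate gives decay $2^{-k'}=2^{-M}$ in the multiplier, which after Littlewood-Paley summation and Lemma \ref{lemma:minkowskitrick}-type reasoning (to pass from the one-dimensional convolution bound to the two-dimensional statement) gives the gain directly with no need for local smoothing.

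I expect the main obstacle to be the bookkeeping of the three competing scales — the two frequency scales hidden in $M$ and the $2^{-M}$-scale of the $r_z$-decomposition — and in particular verifying that the $\beta=1/(\alpha-1)$ normalization makes the coefficient-freezing error genuinely negligible at scale $2^M$ in $\eta$ uniformly in $(k,m,l)$ subject to $M\ge 0$; this is exactly the point flagged in the text as \eqref{eqn:localsmoothinglemmapf3}. A secondary technical point is that the local smoothing estimate of Theorem \ref{0813theorem8.1} is stated for a fixed $p>2$ with a small gain, so one must be careful to interpolate against the trivial $L^2$ (or $L^\infty$) bound to land on the full range $p>2$ while keeping a positive (if $p$-dependent) exponent $\gamma_p$ that still beats the $2^M$-fold loss; since the loss is only polynomial ($2^M$, not worse) this is comfortable, but it does mean $\gamma_p\to 0$ as $p\to 2^+$, consistent with the sharpness remark after Theorem \ref{thm:HTsingleannMeas}.
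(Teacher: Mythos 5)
Your rescaling bookkeeping in the first paragraph is correct and matches the paper: after the changes of variables $t\to 2^{l-w/\alpha}t$ and the conjugation by the dilation $(x,y)\mapsto(2^{l-w/\alpha}x,2^{l\alpha-w+v}y)$, the operator becomes a unit-scale average along $(t,r_z[t]^\alpha)$ applied to a function with frequency support in the annulus $\|(\xi,\eta)\|\sim 2^M$. The gap is in how you then dispose of the measurable coefficient $r_z$. You propose to partition $[1,2)$ into $\sim 2^M$ intervals, freeze $r_z$ on each, and beat the resulting $2^{M/p}$ loss by a per-piece local smoothing gain ``strictly bigger than $1/p$.'' This does not go through with the tools at hand: for a \emph{fixed} value of the coefficient the single-scale curved average satisfies $\|A_uP_Mf\|_p\lesssim 2^{-M/p}\|f\|_p$ and no better (stationary phase at $L^2$ interpolated with $L^\infty$, or the sharp fixed-time wave estimate), so a gain exceeding $1/p$ per frozen piece would require the full-strength local smoothing conjecture in $L^p$-average over the dilation parameter --- which is open for general $\alpha$ and is explicitly \emph{not} what Theorem \ref{0813theorem8.1} provides (the paper stresses it only proves and uses an ``$\varepsilon$-amount'' of local smoothing, with a small unspecified $\gamma_{p,\alpha}$). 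Also, after freezing, the operator is not ``a genuine convolution operator in $z$'': it is a $z$-dependent selection among $2^M$ convolution operators, which is precisely a maximal/square-function object, so the loss cannot be wished away.

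The paper's proof avoids all of this because Theorem \ref{0813theorem8.1} is already a \emph{maximal} estimate, $\|\sup_{u}|A_uP_kf|\|_p\lesssim 2^{-\gamma_{p,\alpha}k}\|f\|_p$. The role of the exponent $\beta=1/(\alpha-1)$ is not to make a freezing error negligible; it is that, via $\beta=\alpha\beta-1$ and the substitution $t\to r_z^{-\beta}t$, the rescaled operator with coefficient $r_z$ is \emph{exactly} $A_{r_z^{-\beta}}$ from \eqref{eqn:avgoperator} --- this is the content of \eqref{eqn:localsmoothinglemmapf3}. Since $r_z^{-\beta}$ ranges over a fixed compact subinterval of $(0,1]$, one bounds pointwise by $\sup_u|A_uP_Mf|$ and applies Theorem \ref{0813theorem8.1} directly, absorbing the measurable dependence with zero loss. (A minor further point: Theorem \ref{0813theorem8.1} is stated for every $p>2$, so no interpolation is needed inside this lemma; the interpolation down to $p$ near $2$ happens elsewhere in the paper.) To repair your argument you should replace the freezing step by this pointwise domination; as written, the quantitative core of your proof relies on an input stronger than anything established in the paper.
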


Before we proceed to proving this statement let us first note that it directly implies \eqref{lneg} and \eqref{lpoz} (here we use the identity $\beta+1=\alpha\beta$ and $\alpha>1$).

\begin{proof}[Proof of Lemma \ref{localsmoothinglemma}]
We have
\beq\label{eqn:localsmoothinglemmapf5} A_{2^v r_z,w,l} \pi_{k}\pj_{m} f(z) = \int_\R \pi_{k}\pj_{m}f ( x - t, y- 2^v r_z [t]^\alpha) \psi_l ( 2^{\frac{w}\a} r_z^\beta t ) \frac{dt}{|t|}. \endeq
By a change of variables $t\to 2^{l-\frac{w}\a} t$ we obtain
\beq\label{eqn:localsmoothinglemmapf1}
 \int_\R \pi_{k}\pj_{m} f (x-2^{l-\frac{w}\a}t, y-2^{l\a-w+v} r_z [t]^\alpha) \psi_0(r_z^\beta t) \frac{dt}{|t|}. \endeq
\newcommand{\dil}[2]{D_{#1, #2}}
Define $\dil{a}{b} f(x,y) := f(2^a x, 2^b y)$ 
and 
\beq\label{eqn:localsmoothinglemmapf4}
Bf(z) := \int_\R f(x-t, y-r_z  [t]^\alpha) \psi_0(r_z^\beta t) \frac{dt}{|t|}.
\endeq
Changing variables $t\to r^{-\beta}_z t$ and using the identity $\beta=\alpha\beta-1$ we see that $B$ can be written in terms of the averaging operator from \eqref{eqn:avgoperator}:
\beq\label{eqn:localsmoothinglemmapf3} Bf(z) = \int_\R f(x- r_z^{-\beta} t, y - r_z^{-\beta} [t]^\alpha) \psi_0(t) \frac{dt}{|t|} = A_{r_z^{-\beta}} f(z).\endeq
From \eqref{eqn:localsmoothinglemmapf5} -- \eqref{eqn:localsmoothinglemmapf4} we see that
\beq
A_{2^v r_z,w,l} \pi_{k}\pj_{m} f(z)=\dil{-l+\frac{w}\a}{-l\alpha+w-v} B \dil{l-\frac{w}\a}{l\a-w+v} \pi_{k}\pj_{m} f(z).
\endeq 
Since $\dil{a}{b}\pi_{k}\pj_{m} = \pi_{k+a}\pj_{m+b}\dil{a}{b}$, the right hand side in the previous display can be written as
\beq\label{eqn:localsmoothinglemmapf2}
\dil{-l+\frac{w}\a}{-l\alpha+w-v} B \pi_{k+l-\frac{w}\a}\pj_{m+l\a-w+v} \dil{l-\frac{w}\a}{l\a-w+v} f(z).
\endeq
Since conjugation by $\dil{a}{b}$ is an $L^p$ isometry, the $L^p$ norm of \eqref{eqn:localsmoothinglemmapf2} equals
\beq
\|B \pi_{k+l-\frac{w}\a}\pj_{m+l\a-w+v} f\|_p.
\endeq
The claim now follows from Theorem \ref{0813theorem8.1} once we notice that the frequency support of $\pi_{k+l-\frac{w}\a}\pj_{m+l\a-w+v} f$ is contained in the annulus 
$\|(\xi,\eta)\| \sim 2^M$.
\end{proof}

\subsection{The low frequency case}\label{subsection-remainder}

In this subsection, we bound the first term in \eqref{0304e5.12}. We write it as
\beq\label{0825e5.43h}
\begin{split}
 I_{k_0}&=\sum_{k\in \Z}\sum_{l\le \max\{-k, -k_0/\alpha\}} T_{k_0, l} \pi_{k} \pj_{0} f\\
& = \sum_{k\in \Z} \sum_{l\le -\frac{k_0}{\alpha}} T_{k_0, l} \pi_{k} \pj_{0} f+ \sum_{k\leq\frac{k_0}{\alpha}}\sum_{-\frac{k_0}{\alpha}\le l\le -k}T_{k_0, l} \pi_{k} \pj_{0} f \\
& =:I_{k_0}^{(1)}+I_{k_0}^{(2)}\,.
\end{split}
\endeq
The first term can be bounded by the strong maximal function and the maximally truncated Hilbert transform in the $x$-variable. Indeed, comparing $I_{k_0}^{(1)}$ with
$$\sum_{l\le -\frac{k_0}{\alpha}}\int_{\R} \pj_{0} f(x-t, y)\psi_l(t)\frac{dt}{t},$$
we find that their difference is bounded by the strong maximal function. This follows by the same argument as in Section \ref{0814subsection3.1}. 

We pass now to the treatment of the second term, $I_{k_0}^{(2)}$. For this purpose we first define the ``one-dimensional" operator
\beq\label{08311e5.39}
 U_{k_0, l} f(x,y):=\int_{\R} f(x, y-u_z^{(k_0)} [t]^{\alpha})\psi_l([u^{(0)}_z]^{\beta} t)\frac{dt}{t}\,.
\endeq
Notice that when $[t]^{\alpha}$ is an even function this operator is identically zero. We next rewrite the second term as
\beq\label{dec2}
\begin{split}
I_{k_0}^{(2)}=\sum_{l=0}^{\infty}\sum_{k\leq \frac{k_0}{\alpha}-l} (T_{k_0, -\frac{k_0}{\alpha}+l} \pi_{k} \pj_{0}f-U_{k_0, -\frac{k_0}{\alpha}+l} \pi_{k} \pj_{0}f)+\sum_{l=0}^{\infty}\sum_{k\leq\frac{k_0}{\alpha}-l} U_{k_0, -\frac{k_0}{\alpha}+l} \pi_{k} \pj_{0}f.
\end{split}
\endeq
For the contribution coming from the latter term, by the triangle inequality, it suffices to prove that

\beq
\Big\|\sup_{k_0\in \Z} \Big|\sum_{k\le \frac{k_0}{\alpha}-l_0} \int_{\R} (\pi_{k} \pj_{0}f)(x, y-u_z^{(k_0)} [t]^{\alpha})\psi_{-\frac{k_0}{\alpha}+l_0}([u^{(0)}_z]^{\beta} t)\frac{dt}{t}\Big|\Big\|_p \lesim 2^{-\gamma_p \cdot l_0}\|f\|_p,
\endeq
for each $l_0\in \N$, and for a constant $\gamma_p$ depending only on $p$. This further follows from the pointwise bound
\beq\label{ms}
\Big|\sum_{k\le \frac{k_0}{\alpha}-l_0} \int_{\R} (\pi_{k} \pj_{0} f)(x, y-u_z^{(k_0)} [t]^{\alpha})\psi_{-\frac{k_0}{\alpha}+l_0}([u^{(0)}_z]^{\beta} t)\frac{dt}{t}\Big| \lesim 2^{-\gamma \cdot l_0} M_S f(x, y),
\endeq
which is again a consequence of the mean zero property of $\psi_{-\frac{k_0}{\alpha}+l_0}(t)\cdot \frac{1}{t}$. Indeed, \eqref{ms} follows from classical Calder\'{o}n-Zygmund theory. We leave the details for the interested reader.\\

Turning our attention towards the contribution from the former term on the right hand side of \eqref{dec2}, we notice that it is enough to show that
\beq
\Big\| \sup_{k_0\in \Z}\Big|\sum_{k\le \frac{k_0}{\alpha}-l_0}\big(T_{k_0, -\frac{k_0}{\alpha}+l_0}  \pi_{k} \pj_{0} f-U_{k_0, -\frac{k_0}{\alpha}+l} \pi_{k} \pj_{0} f\big)\Big|\Big\|_p \lesim 2^{-\gamma_p \cdot l_0} \|f\|_p,
\endeq
for each $l_0\in \N$. This in turn follows from 
\begin{claim}
In the above setting, for each $l_0, j_0\in \N$ and $k_0\in\Z$ the following estimate holds uniformly
\beq\label{s21}
\Big\| \sup_{k_0\in \Z}\Big|T_{k_0, -\frac{k_0}{\alpha}+l_0}  \pi_{\frac{k_0}{\alpha}-l_0-j_0} \pj_{0} f-U_{k_0, -\frac{k_0}{\alpha}+l} \pi_{\frac{k_0}{\alpha}-l_0-j_0} \pj_{0} f\Big|\Big\|_p \lesim 2^{-\gamma_p \cdot \max\{l_0, \frac{j_0}{9}\}} \|f\|_p\,.
\endeq
\end{claim}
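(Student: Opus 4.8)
\emph{Plan of proof.}
The first thing I would do is dispose of the supremum over $k_0$. Writing $l_0':=-\tfrac{k_0}{\alpha}+l_0$ and $k:=\tfrac{k_0}{\alpha}-l_0-j_0$ (real indices are harmless, as elsewhere in the paper), one has the key identities $k+l_0'=-j_0$ and $\alpha l_0'+k_0=\alpha l_0$. Using $\ell^\infty\hookrightarrow\ell^p$ it is enough to control $\bigl(\sum_{k_0}\|(T_{k_0,l_0'}-U_{k_0,l_0'})\pi_{k}\pj_{0}f\|_p^p\bigr)^{1/p}$; since each integer is hit by $O(\alpha)$ of the indices $k$ as $k_0$ ranges over $\Z$, and since $\sum_{k}\|\pi_{k}\pj_{0}f\|_p^p=\big\|\big(\sum_k|\pi_k\pj_0 f|^p\big)^{1/p}\big\|_p^p\le\big\|\big(\sum_k|\pi_k\pj_0 f|^2\big)^{1/2}\big\|_p^p\lesssim\|f\|_p^p$ (this uses $p>2$, via $\ell^2\hookrightarrow\ell^p$ and Littlewood--Paley), the Claim reduces to the single-annulus estimate
\beq\label{eq:planA}
\|(T_{k_0,l_0'}-U_{k_0,l_0'})\pi_{k}\pj_{0}f\|_p\lesssim 2^{-\gamma_p\max\{l_0,j_0/9\}}\|\pi_{k}\pj_{0}f\|_p,
\endeq
uniformly in $k_0$. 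Finally, since $\min(2^{-j_0},2^{-\gamma_p l_0})\le 2^{-\gamma_p\max\{l_0,j_0/9\}}$, it suffices to prove \eqref{eq:planA} with the right-hand side replaced once by $2^{-j_0}\|\pi_k\pj_0 f\|_p$ and once by $2^{-\gamma_p l_0}\|\pi_k\pj_0 f\|_p$.

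For the $2^{-j_0}$ bound I would exploit that $T_{k_0,l_0'}-U_{k_0,l_0'}$ is a pure difference in the first variable:
\[
(T_{k_0,l_0'}-U_{k_0,l_0'})g(x,y)=\int_{\R}\!\big[g(x-t,y-\uk_z[t]^{\a})-g(x,y-\uk_z[t]^{\a})\big]\psi_{l_0'}((\uo_z)^{\b}t)\,\frac{dt}{t}.
\]
The cutoff forces $|t|\sim 2^{l_0'}(\uo_z)^{-\b}\lesssim 2^{l_0'}$, hence $2^{k}|t|\lesssim 2^{k+l_0'}=2^{-j_0}\le 1$, i.e. the translation is below the wavelength of $g=\pi_k\pj_0 f$. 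Writing $g=\check\psi_k *_x(\widetilde\pi_k\pj_0 f)$ ($*_x$ = convolution in the first variable, $\widetilde\pi_k$ a fattened projection with $\pi_k=\pi_k\widetilde\pi_k$), the mean value theorem gives $|\check\psi_k(x-t)-\check\psi_k(x)|\lesssim 2^{k}|t|\,\Phi_k(x)\lesssim 2^{-j_0}\Phi_k(x)$ with $\Phi_k$ an $L^1$-normalised bump at scale $2^{-k}$. Since, for each fixed $z$, the push-forward of $|\psi_{l_0'}((\uo_z)^{\b}t)|\tfrac{dt}{|t|}$ under $t\mapsto\uk_z[t]^{\a}$ is dominated, uniformly in $z$, by an $L^1$-normalised bump on an interval of length $\sim 2^{\a l_0}$ at distance $\sim 2^{\a l_0}$ from the origin, one obtains the pointwise bound $|(T_{k_0,l_0'}-U_{k_0,l_0'})g(x,y)|\lesssim 2^{-j_0}M_S(\widetilde\pi_k\pj_0 f)(x,y)$, and \eqref{eq:planA} (with exponent $1$, which is better than the claimed $1/9$; the weaker form is stated only to match the corresponding estimates for the maximal operator in Section \ref{section-main-theorem}) follows on taking $L^p$-norms and using $\|\widetilde\pi_k\pj_0 f\|_p\lesssim\|\pi_k\pj_0 f\|_p$.

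For the $2^{-\gamma_p l_0}$ bound I would estimate $T_{k_0,l_0'}$ and $U_{k_0,l_0'}$ separately (the case $l_0=0$ being trivial). For $T$, note $T_{k_0,l_0'}=A_{2^{k_0}\uo_z,0,l_0'}$ in the notation of Lemma \ref{localsmoothinglemma}, with $r_z=\uo_z\in[1,2)$, $v=k_0$, $w=0$, $m=0$; then $M=\max\{k+l_0',\alpha l_0\}=\alpha l_0\ge 0$, so Lemma \ref{localsmoothinglemma} directly gives $\|T_{k_0,l_0'}\pi_k\pj_0 f\|_p\lesssim 2^{-\gamma_p\alpha l_0}\|\pi_k\pj_0 f\|_p$. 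For $U$: if $[t]^{\a}=|t|^{\a}$ then $U_{k_0,l_0'}\equiv 0$ by oddness of $\tfrac1t$ and $T-U=T$; if $[t]^{\a}=\mathrm{sgn}(t)|t|^{\a}$, write $U_{k_0,l_0'}(\pi_k\pj_0 f)(x,y)=\int_{\R}(\pi_k\pj_0 f)(x,y-w)K_z(w)\,dw$, where $K_z:=\check\psi_0*\nu_z$ and $\nu_z$ is the push-forward of $\psi_{l_0'}((\uo_z)^{\b}t)\tfrac{dt}{t}$ under $t\mapsto\uk_z\,\mathrm{sgn}(t)|t|^{\a}$ (using a slightly fattened $\psi_0$ and that $\pi_k\pj_0 f$ has second-variable frequency $\sim 1$). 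Then $\widehat{K_z}(\eta)=\psi_0(\eta)\widehat{\nu_z}(\eta)$, and after rescaling $t\mapsto 2^{l_0'}(\uo_z)^{-\b}t$ one computes that $\widehat{\nu_z}(\eta)$ is an oscillatory integral with phase $-\,\eta\,2^{\a l_0}(\uo_z)^{-\b}\,\mathrm{sgn}(s)|s|^{\a}$; on the support of $\psi_0$ this has no stationary point and derivative of size $\sim 2^{\a l_0}|\eta|$ (here $\alpha\ne 1$ enters, $s^{\alpha-1}\neq0$ on $\mathrm{supp}\,\psi_0$), so repeated integration by parts yields $|\partial_\eta^{j}\widehat{\nu_z}(\eta)|\lesssim_{j,N}2^{j\alpha l_0}(2^{\a l_0}|\eta|)^{-N}$ with constants \emph{uniform in $z$}. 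Choosing $N$ large, $K_z$ is therefore dominated, pointwise and uniformly in $z$, by $2^{-\gamma_p l_0}$ times an $L^1$-normalised bump at scale $1$; hence $|U_{k_0,l_0'}\pi_k\pj_0 f(x,y)|\lesssim 2^{-\gamma_p l_0}M(\pi_k\pj_0 f)(x,y)$ with $M$ the Hardy--Littlewood maximal operator in the second variable, giving $\|U_{k_0,l_0'}\pi_k\pj_0 f\|_p\lesssim 2^{-\gamma_p l_0}\|\pi_k\pj_0 f\|_p$. Combining the $T$- and $U$-bounds yields \eqref{eq:planA} with $2^{-\gamma_p l_0}$, and assembling the two cases completes the proof.

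The step I expect to be the main obstacle is the $U$-estimate in the odd case: for a merely measurable coefficient $u$ the naive Plancherel argument (which would be immediate if $u$ were constant) is unavailable, and the point is to pass to the kernel $K_z=\check\psi_0*\nu_z$ and exploit that the van der Corput bounds for $\widehat{\nu_z}$ depend only on $\alpha$ and $l_0$, not on $z$. A secondary, more routine but somewhat delicate point is the bookkeeping that passes from the single-$k_0$ estimate \eqref{eq:planA} back to the estimate for $\sup_{k_0}$, which is precisely where the hypothesis $p>2$ is used.
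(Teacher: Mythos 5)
Your proposal is correct and follows essentially the same route as the paper's: the $2^{-j_0}$ gain comes from the mean value theorem (fundamental theorem of calculus) applied in the first variable of the band-limited function $\pi_{\frac{k_0}{\alpha}-l_0-j_0}\pj_{0}f$ (this is the paper's case $j_0\ge 9l_0$), the $2^{-\gamma_p l_0}$ gain comes from Lemma \ref{localsmoothinglemma} for the $T$-part and from the oscillation/mean-zero property of the push-forward kernel for the $U$-part (the paper's case $j_0\le 9l_0$, where it defers to the argument behind \eqref{ms}), and the supremum over $k_0$ is absorbed via $\ell^\infty\hookrightarrow\ell^p$, the almost-disjointness of the projections $\pi_{\frac{k_0}{\alpha}-l_0-j_0}$, and Littlewood--Paley with $p>2$, exactly as the paper does implicitly elsewhere (cf.\ the deduction of \eqref{21} from \eqref{lneg}). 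The only slip is cosmetic: in the mean-value step the fattened projection should be the \emph{outer} convolution factor, i.e. $g=(\widetilde{\psi}_k)^{\vee}*_x(\pi_k\pj_{0}f)$, since $\|\widetilde{\pi}_k\pj_{0}f\|_p\lesim\|\pi_k\pj_{0}f\|_p$ is false as written; this is harmless anyway, because $\sum_k\|\widetilde{\pi}_k\pj_{0}f\|_p^p\lesim\|f\|_p^p$ still holds for $p\ge 2$, so your summation over $k_0$ goes through unchanged.
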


\begin{proof}
We first look at the case $j_0\le 9\cdot l_0$. Based on the treatment of $I_{k_0}^{(2)}$, it is enough to show that
\beq
\|T_{k_0, -\frac{k_0}{\alpha}+l_0}  \pi_{\frac{k_0}{\alpha}-l_0-j_0} \pj_{0} f\|_p \lesim 2^{-\gamma_p \cdot l_0}\|f\|_p,
\endeq
for each fixed $k_0, l_0$ and $j_0$. This follows from Lemma \ref{localsmoothinglemma}.

Next we consider the case $j_0\ge 9\cdot l_0$. It suffices to prove the pointwise bound
\beq\label{dif}
\big|T_{k_0, -\frac{k_0}{\alpha}+l_0}  \pi_{\frac{k_0}{\alpha}-l_0-j_0} \pj_{0} f- U_{k_0, -\frac{k_0}{\alpha}+l} \pi_{\frac{k_0}{\alpha}-l_0-j_0}  \pj_{0}f\big| \lesim 2^{-\gamma\cdot j_0} \cdot M_S (f),
\endeq
for some positive $\gamma>0$. By scaling it suffices to look at the case $k_0=0$. We now observe that the left hand side of \eqref{dif} can be written as
\beq\label{evencancellation}
\Big|\int_{\R} \Big[ (\pi_{-l_0-j_0} \pj_{0} f)(x-t, y-u_z^{(0)}[t]^{\alpha})-(\pi_{-l_0-j_0} \pj_{0} f)(x, y-u_z^{(0)}[t]^{\alpha})\Big] \psi_{l_0}([u^{(0)}_z]^{\beta}t)\frac{dt}{t}\Big|.
\endeq
Now our claim follows by applying the fundamental theorem of calculus in the first variable of $\pi_{-l_0-j_0} \pj_{0}f$.
\end{proof}

\section{Proof of Theorem \ref{thm:mainmax}}\label{section-main-theorem}

The proof is organized as follows:
\begin{itemize}
\item In Section \ref{prel} we reduce our proof to the exponential decay estimate \eqref{2104e1.6}.

\item In Section \ref{sec:maxthmpgtr2} we prove this decay estimate for $p>2$, by only assuming $u$ to be a measurable function. This recovers the result of Marletta and Ricci \cite{MR}.

\item In Section \ref{section-psmall} we prove the estimate \eqref{2104e1.6} for $p\le 2$. This relies on the Lipschitz assumption of the function $u$, a condition that is used when applying a suitable change of variables (see \eqref{Lipschitzchangevariables}). To enable that change of variables, we will introduce several auxiliary functions and make use of Lemma \ref{2704lemma3.6} on the boundedness of maximal operators along curves in lacunary directions. The proof of this lemma is provided in Section \ref{section-lacunary}.
\end{itemize}

\subsection{Preliminaries}\label{prel}

Since we are dealing with a positive operator we may assume without loss of generality that $f\ge 0$. Futhermore, we may assume that $u(x,y)>0$  for all $(x,y)\in\R^2$. We will adopt the notation \eqref{08300e5.4} --  \eqref{08300e5.6} from the previous section.

For a positive real number $u$ and $l\in\Z$ we define
\[ A_{u,l} f(z) = \int_\R f(x-t, y-u [t]^\alpha) \psi_l(2^{\frac{v}\a} (2^{-v} u)^{\beta} t) \frac{dt}{|t|},  \]
where $v\in\Z$ is such that $u\in [2^v, 2^{v+1})$ and $\beta := \frac1{\alpha-1}$. In particular, $2^{-v} u \in [1,2)$.
Observe that
\[ \mathcal{M}^{(\alpha)}_{u,\varepsilon_0} f(z) \lesssim \sup_{\substack{l\,:\,v_z\in E_{l}}} A_{u_z,l} f(z),\]
where
\beq\label{eqn:defEl}
E_l := \{ v\in\Z\,:\,2^{l}\le c_\alpha \varepsilon_0 2^{\frac{v}\a}\},
\endeq
for $v\in\Z$ and $c_\alpha$ is a fixed constant depending only on $\alpha$. Linearizing the supremum we introduce the operator
\[ \mathcal{M} f(z) := A_{u_z, l_z} f(z), \]
where $z\mapsto l_z$ is an arbitrary measurable map $\R^2\to \Z$ such that $v_z\in E_{l_z}$ for all $z$. To prove our theorem it suffices to show that
\[ \|\mc{M}f\|_p \lesssim_{\alpha,p} \|f\|_p\]
for all $1<p<\infty$.
By the Fourier inversion formula we have
\[ A_{u_z,l} f(z) = \int_{\R^2} \widehat{f}(\xi,\eta) e^{ix\xi+iy\eta} m_{l}(z,\xi,\eta) d(\xi,\eta), \]
with the symbol $m_{l}(z,\xi,\eta)$ given by
\beq\label{eqn:Aulsymbol} m_{l}(z,\xi,\eta) := \int_\R e^{-i(\xi t+\eta u_z [t]^\alpha)} \psi_l(2^{\frac{v_z}\a} (u^{(0)}_z)^\beta t)\frac{dt}{|t|}, \endeq
where $u^{(0)}_z := 2^{-v_z} u_z$, as defined in \eqref{08300e5.5}.
 As in our previous analysis, our approach relies on decomposing our operator relative to the behavior of the phase function of the multiplier. Our initial focus will be on the behavior of the $\eta$ component of the phase which corresponds in the spatial variable to the component containing the vector field $u$. Thus, we will discuss the following two cases:
 \begin{itemize}
 \item the $\eta-$low frequency regime: $|\eta| 2^{\a l}\leq 1$;

 \item the $\eta-$high frequency regime: $|\eta|2^{\a l}> 1$.
 \end{itemize}

Following the above description, we perform a Littlewood-Paley decomposition in the $y$-variable and write
\beq
A_{u_z,l_z} = \sum_{k\in\Z} A_{u_z,l_z} \pj_{k} = \sum_{k\le -\alpha l_z} A_{u_z,l_z} \pj_{k} + \sum_{k>-\alpha l_z} A_{u_z,l_z}\pj_{k}.
\endeq

\noindent \underline{Case 1.} $\eta-$low frequency.\\

The sum over $k\le -\alpha l_z$ can be estimated by the strong maximal operator and is therefore bounded on all $L^p$, $1<p\le \infty$. This follows by the same arguments as in Section \ref{0814subsection3.1}.\\

\noindent \underline{Case 2.} $\eta-$high frequency.\\

The remaining term equals\footnote{Here and in the following our notation will ignore the issue that $-\alpha l_z$ might not be an integer as this can be easily addressed by setting $P^{(i)}_\alpha=P^{(i)}_{\lfloor\alpha\rfloor}$.}
\beq\label{eqn:maxthmpf1} \sum_{k\in\N} A_{u_z,l_z}\pj_{-\alpha l_z+k}=\sum_{k\in\N}  \sum_{m\in\Z} A_{u_z,l_z} \pi_{m}\pj_{-\alpha l_z+k}. \endeq

Fix a $k\in \N$, and for the following heuristic, let us imagine for a moment that $u_z$ is a constant. Then from \eqref{eqn:Aulsymbol}, the symbol of $A_{u_z,l_z}\pj_{-\alpha l_z+k}$ becomes
$$\int_{\R} e^{i(2^{l_z} u_z^{-\frac{1}{\alpha}}\xi t+\eta 2^{\alpha \cdot l_z} [t]^{\alpha})}\psi_0(t)\frac{dt}{|t|}.$$
From the stationary phase principle it is plausible to expect an exponential decay for the $L^p$ norm of the term \eqref{eqn:maxthmpf1} in terms of $k\in \N$. Indeed, this is the case for Hilbert transforms along one-variable curves (see the estimate \eqref{1702e4.15}).  However, in the present situation we do not know how to exhibit such an exponential decay. To remedy this we first remove from the term \eqref{eqn:maxthmpf1} the $\xi-$low frequency component; the remaining part then admits an exponential decay estimate in $k\in \N$. \\

We split the term into \eqref{eqn:maxthmpf1} in two components corresponding to
\begin{itemize}
\item the $\xi-$low frequency regime: $|2^{l_z} u_z^{-\frac{1}{\alpha}}\xi|<1$;

\item the $\xi-$high frequency regime: $|2^{l_z} u_z^{-\frac{1}{\alpha}}\xi|\geq 1$.
\end{itemize}
That is, we write \eqref{eqn:maxthmpf1} as
\beq\label{xidec}
\sum_{k\in\N} A_{u_z,l_z} \sum_{m\le -l_z+\frac{v_z}\alpha} \pi_{m}\pj_{-\alpha l_z+k} + \sum_{k\in\N} \sum_{m\in\N}  A_{u_z,l_z} \pi_{-l_z+\frac{v_z}\alpha+m}\pj_{-\alpha l_z+k}.
\endeq

\noindent \underline{Case 2.1.} $\xi-$low frequency.\\

This corresponds to the first term in \eqref{xidec}. The contribution from this term can be controlled by the strong maximal function using the same argument as in the $\eta$-low frequency case. We omit the details. 
\\
\\
\noindent \underline{Case 2.2.} $\xi-$high frequency.\\

To handle the latter term on the right hand side of \eqref{xidec}, it suffices to show that for every $p>1$ there exists $\gamma_p>0$ such that
\beq\label{2104e1.6}
\Big\| \sum_{m\in\N}  A_{u_z, l_z}\pi_{-l_z+\frac{v_z}\alpha+m}\pj_{-\alpha l_z+k} f \Big\|_p \lesim 2^{-\gamma_p k}\|f\|_p.
\endeq

In view of the Littlewood-Paley square function estimate,  the argument splits naturally into two cases: $p>2$ and $p\le 2$.

\subsection{The case $p>2$}\label{sec:maxthmpgtr2}
In this section we reprove the result of Marletta and Ricci \cite{MR} and, in particular, we only require the function $u$ to be measurable. We also do not need to consider a truncated maximal function; under the measurability assumption, the truncated case is equivalent to the untruncated case, by a scaling argument. Hence we take $\varepsilon_0=\infty$.

We first remark that the left hand side of \eqref{2104e1.6} is bounded from above by
\beq\label{2106e3.10}
\Big\|\Big( \sum_{l\in \Z}\Big| \sum_{m\in\N}A_{u_z, l} \pi_{-l+\frac{v_z}\alpha+m}\pj_{-\alpha l+k} f \Big|^p \Big)^{1/p}\Big\|_p\,.
\endeq
Hence, it suffices to prove that for every $l\in\Z$ one has
\beq\label{2106e3.10h}
\Big\|\sum_{m\in\N}A_{u_z, l} \pi_{-l+\frac{v_z}\alpha+m}\pj_{-\alpha l+k} f\Big\|_p\lesssim 2^{-\gamma_p k} \|f\|_p\:.
\endeq

For a fixed $v\in\Z$, recall the definition of $u_z^{(v)}$ from \eqref{08300e5.6}.
Then the left hand side of \eqref{2106e3.10h} can be bounded by
\beq\label{2204e1.17}
\Big\| \Big(\sum_{v\in \Z}\Big|\sum_{m\in\N}A_{u_z^{(v)},l}\pi_{-l+\frac{v}\alpha+m}\pj_{-\alpha l+k} f\Big|^p\Big)^{1/p} \Big\|_p.
\endeq
Taking in account \eqref{2106e3.10} --\eqref{2204e1.17}, we see that it suffices to show that for every $p>2$ there exists $\gamma_p>0$ such that for every $k,m\in\N$ and $l,v\in\Z$ we have

\beq\label{contrdec}
\Big\|A_{u_z^{(v)},l}\pi_{-l+\frac{v}\alpha+m}\pj_{-\alpha l+k} f(z)\Big\|_p \lesim 2^{-\gamma_p\,\max\{k,\,m\}} \,\|f\|_p\,.
\endeq
Noting that $A_{u_z^{(v)},l} = A_{2^v u_z^{(0)},v,l}$ we recognize this as precisely the conclusion of Lemma \ref{localsmoothinglemma}.

\subsection{$L^p$ estimates for $p\le 2$}\label{section-psmall}
In this subsection we will prove \eqref{2104e1.6} for all $1<p\le 2$. Note that we are working here with two extra assumptions, as compared with the previous subsection:
\begin{itemize}
\item First, we assume that $u$ is Lipschitz. This is because if $u$ is only assumed to be measurable, then it is possible for $\mc{M}_{u,\epsilon_0}^{(\alpha)}$ to be unbounded for each $p\le 2$. One may verify this by simply plugging the characteristic function of the unit ball into the inequality.
\item Second, the truncation $\epsilon\le \epsilon_0$ will play a crucial role. That is, in effect we will have to take into account the range restriction of $l_z$ expressed by \eqref{eqn:defEl}.
\end{itemize}
Both of these assumptions will only be used to ensure the validity of the Lipschitz change of variables in \eqref{Lipschitzchangevariables}.

Let us begin with the proof. By interpolation with the case $p>2$, it suffices to prove an estimate without decay; that is, it suffices to show that
\beq\label{2204e1.23}
\Big\| \sum_{m> -l_z +\frac{v_z}{\alpha}}  A_{u_z, l_z}\pi_{m}\pj_{-\alpha l_z+k} f(z) \Big\|_{L^p_z} \lesim \|f\|_p.
\endeq

Again we get rid of the $z$-dependence of $l_z$ by inserting a sum over all possible values of $l_z$, similar to \eqref{2106e3.10}. In particular, we bound the expression inside the $L^p$ norm on the left hand side of \eqref{2204e1.23} by\footnote{At this point we fix $\epsilon_0$ possibly depending on $\alpha$ so that this estimate becomes valid.}
\beq\label{2106e3.18}
\Big( \sum_{l\,:\,v_z\in E_{l}}\Big| \sum_{m> -l +\frac{v_z}{\alpha}}  A_{u_z, l}   \pi_{m}\pj_{-\alpha l+k} f(z) \Big|^2 \Big)^{1/2}.
\endeq
We view the above expression as an $l^2$-valued function. When estimating the $L^p$ norm of it, we would like to appeal to certain interpolation for an $l^q$-valued function. However, for such a purpose, it is not convenient to carry the summation $\sum_{l: v_z\in E_l}$. It will be more convenient to see \eqref{2106e3.18} as
\beq\label{0814e7.15h}
\Big( \sum_{l\in \Z}\Big| \mathbbm{1}_{E_{l}}(v_z) \sum_{m> -l +\frac{v_z}{\alpha}}  A_{u_z, l} \pi_{m}\pj_{-\alpha l+k} f(z) \Big|^2 \Big)^{1/2}.
\endeq
To estimate \eqref{0814e7.15h} we will run an interpolation argument which requires one to consider estimates of the form
\beq\label{0814e7.16h}
\Big\|\Big( \sum_{l\in \Z}\Big| \mathbbm{1}_{E_{l}}(v_z) \sum_{m> -l +\frac{v_z}{\alpha}} A_{u_z, l}\pi_{m}\pj_{-\alpha l+k} f(z) \Big|^q \Big)^{1/q}\Big\|_p\lesim \Big\|\Big( \sum_{l\in \Z}\Big|\pj_{-\alpha l+k} f\Big|^q \Big)^{1/q}\Big\|_p. 
\endeq
When $q=\infty$, it is not difficult to see that the expression inside the $L^p$ norm on the left hand side of the last display is bounded by 
\beq\label{0814e7.17h}
\mc{M}^{(\alpha)}_{u, \epsilon_0} (M_S f)(z).
\endeq
Here $M_S$ denotes the strong maximal operator. Recall that we already proved the $L^p$ boundedness of $\mc{M}^{(\alpha)}_{u, \epsilon_0}$ for all $p>2$ in Section \ref{sec:maxthmpgtr2}. This implies
\beq\label{0814e7.18h}
\eqref{0814e7.16h} \text{ holds for all } p>2 \text{ and } q=\infty.
\endeq
We will interpolate estimate \eqref{0814e7.18h} with
\beq\label{0504e3.10}
\Big\|\mathbbm{1}_{E_{l}}(v_z) \sum_{m>-l+\frac{v_z}{\alpha}}A_{u_z, l}\pi_{m}\pj_{-\alpha l+k} f(z) \Big\|_p\lesim \|f\|_p,
\endeq
for all $p>1$. 
Suppose for the moment that \eqref{0504e3.10} holds. This gives
\beq\label{1030e6.17}
\eqref{0814e7.16h}  \text{ holds for all } p=q>1.
\endeq
Then, interpolating \eqref{1030e6.17} with \eqref{0814e7.18h} we obtain that
\beq\label{08300e6.19}
\|\eqref{0814e7.15h}\|_p \lesim \|f\|_p,
\endeq
for all $p>\frac{4}{3}$. Combining this with \eqref{0814e7.17h} implies
\beq\label{0814e7.21h}
\eqref{0814e7.16h} \text{ holds for }  p>\frac{4}{3}\text{ and }q=\infty.
\endeq
We interpolate \eqref{0814e7.21h} and \eqref{0504e3.10} to see that \eqref{08300e6.19} holds for all $p>\frac{8}{7}$. Repeating this interpolation procedure sufficiently many times, we obtain that \eqref{0814e7.15h} is bounded on $L^p$ for all $p>1$. We learnt this interpolation trick from Nagel, Stein and Wainger \cite{NSW}.\\

Before we turn to the proof of \eqref{0504e3.10} we need to introduce some new notation. For $n\in\Z$ and $z\in\R^2$ we define
\[ u_n (z) := \left\{ \begin{array}{ll} u_z &
\text{if}\;v_z = n,\\
2^n & \text{if}\;v_z \not=n.\end{array}\right. \]
Note carefully that $u_{n}(z)$ is different from $u_z^{(n)}$. Both take values in the interval $[2^{n}, 2^{n+1})$. 
However, the function $u_{n}$ retains more of the regularity of $u$ than $z\mapsto u_z^{(n)}$. This will be important during the proof of \eqref{0504e3.10} (see \eqref{0814e7.35h}, \eqref{0830e6.39h}).

Eliminating the $z$-dependence of $v_z$ by introducing another sum we estimate the left hand side of \eqref{0504e3.10} by
\beq\label{2204e1.31}
\Big\| \Big(\sum_{v\in E_l} \Big| \sum_{m>-l+\frac{v}{\alpha}}A_{u_v(z),l}\pi_{m}\pj_{-\alpha l+k} f(z) \Big|^2\Big)^{1/2} \Big\|_p.
\endeq
By the triangle inequality this is no greater than
\beq\label{2204e1.31b}
\sum_{m>0} \Big\| \Big(\sum_{v\in E_l} \Big| A_{u_{v}(z),l}\pi_{m-l+\frac{v}{\alpha}}\pj_{-\alpha l+k} f(z) \Big|^2\Big)^{1/2} \Big\|_p.
\endeq
Thus \eqref{0504e3.10} will follow if we can show that there exist constants $\gamma_p>0$ such that for every $k,m\in\N, l\in\Z$ we have
\beq\label{2204e1.34}
\Big\| \Big(\sum_{v\in E_l} \Big| A_{u_{v}(z),l}\pi_{m-l+\frac{v}{\alpha}}\pj_{-\alpha l+k} f(z) \Big|^2\Big)^{1/2} \Big\|_p\lesim 2^{-\gamma_p \max\{k,\,m\}}\,\|f\|_p.
\endeq
We first prove \eqref{2204e1.34} for $p=2$. 
By Lemma \ref{localsmoothinglemma} we obtain that for all $p>2$ there exists $\gamma_p>0$ such that
\beq
\Big\| A_{u_{v}(z),l}\pi_{m-l+\frac{v}{\alpha}}\pj_{-\alpha l+k} f(z)\|_p\lesim  2^{-\gamma_p \max\{k,\,m\}}\,\|f\|_p.
\endeq
Thus, in order to show \eqref{2204e1.34} for $p=2$ it will be enough to show the estimate
\beq
\Big\| A_{u_{v}(z),l}\pi_{m-l+\frac{v}{\alpha}}\pj_{-\alpha l+k} f(z)\|_p\lesssim \|f\|_p.
\endeq
for an arbitrary $p<2$. In the following we will prove something stronger, namely that
\beq\label{eqn:627}
\Big\| A_{u_{v}(z),l} f(z)\|_p\lesssim \|f\|_p
\endeq
holds for all $p>1$ provided that $v\in E_l$.
Before commencing the proof of \eqref{eqn:627}, we need to introduce another auxiliary function. If at a point $z=(x ,y)$ we have $v_z=n$, then let $\tilde{u}_{n}(z) := u_z$. We now extend $\tilde{u}_{n}$ to the whole space, requiring only that
\beq
\tilde{u}_{n}(z)\in [2^{n}, 2^{n+1}) \text{ and } \|\tilde{u}_{n}\|_{\mathrm{Lip}}\le 2 \|u\|_{\mathrm{Lip}}.
\endeq
The advantage is that, unlike $u_n$, the function $\tilde{u}_n$ is Lipschitz continuous with controlled Lipschitz norm. This will later be of key importance in ensuring the validity of the Lipschitz change of variables \eqref{Lipschitzchangevariables}. Most importantly, the way that $\tilde{u}_{n}$ and $u_{n}$ are designed is such that they stay very ``close'' to each other, in the sense that if we define the following maximal operator:
\beq\label{dyadicmonomial}
\mc{M}_{\mathrm{dyad}} f(z):=\sup_{j\in \Z} \sup_{\epsilon>0}\frac{1}{2\epsilon} \int_{-\epsilon}^{\epsilon} |f(x-t, y-2^j [t]^{\alpha})|dt.
\endeq
then we have the pointwise estimate
\beq\label{0814e7.35h}
A_{u_v(z),l} f(z)\lesssim \mc{M}_{\mathrm{dyad}} f(z)+A_{\tilde{u}_v(z),l} f(z)
\endeq
for all $v,l\in\Z, z\in\R^2$. The maximal operator $\mc{M}_{\mathrm{dyad}}$ is bounded in $L^p$ for all $p>1$.
\begin{lem}\label{2704lemma3.6}
For each $p>1$, we have
\beq\label{Mcont}
\|\mc{M}_{\mathrm{dyad}}f\|_p \lesim \|f\|_p.
\endeq
The implicit constant depends only on $p$.
\end{lem}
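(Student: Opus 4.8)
The plan is to view $\mc{M}_{\mathrm{dyad}}$ as a two-parameter maximal operator built from anisotropic dilates of a single fixed curve measure, and to estimate it by the Duoandikoetxea--Rubio de Francia scheme: decompose the measure in frequency, use its Fourier decay together with trivial and one-dimensional bounds, and close with a square function. Since the operator is positive we may assume $f\ge 0$, and since replacing $\epsilon$ by the nearest larger dyadic number costs only a bounded factor, I would first reduce to
\beq \mc{M}_{\mathrm{dyad}}f(z)\lesim\sup_{a,b\in\Z}\,(f*\sigma^{(2^a,2^b)})(z), \endeq
where $\sigma$ is the probability measure obtained by pushing $\tfrac12\mathbbm{1}_{[-1,1]}(t)\,dt$ forward under $t\mapsto(t,[t]^\alpha)$, and $\sigma^{(c,d)}$ is its anisotropic dilate, so $\widehat{\sigma^{(c,d)}}(\xi,\eta)=\widehat\sigma(c\xi,d\eta)$ with $\widehat\sigma(\xi,\eta)=\tfrac12\int_{-1}^1 e^{-i(\xi t+\eta[t]^\alpha)}\,dt$. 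Here the $x$-scale $2^a$ comes from $\epsilon$ and the $y$-scale $2^b$ from $2^j\epsilon^\alpha$, and crucially both range over all of $2^{\Z}$ independently, so this is a genuine two-parameter family and the known one-parameter maximal bound for the fixed curve $(t,[t]^\alpha)$ does not suffice.

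Next I would record the needed properties of $\widehat\sigma$: the trivial bound $|\widehat\sigma|\le1$; a decay estimate $|\widehat\sigma(\xi,\eta)|\lesim(1+|\xi|+|\eta|)^{-\delta}$ for some $\delta=\delta(\alpha)>0$, obtained from van der Corput's lemma on the dyadic subintervals of $[-1,1]$ around $t=0$, using $\alpha\ne1$ so that $[t]^\alpha$ has nonvanishing curvature away from the origin (this is the only place the curvature hypothesis is used); and the two flatness bounds $|\widehat\sigma(\xi,\eta)-\widehat\sigma(\xi,0)|\lesim\min(1,|\eta|)$ and $|\widehat\sigma(\xi,\eta)-\widehat\sigma(0,\eta)|\lesim\min(1,|\xi|)$, immediate from $|e^{-i\eta[t]^\alpha}-1|\le|\eta|$ and $|e^{-i\xi t}-1|\le|\xi|$ on $|t|\le1$.

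Then comes the frequency decomposition. Fixing an even $\beta_0\in C_c^\infty(\R)$ that is $1$ on $[-1,1]$ and supported in $[-2,2]$, I split $1=\beta_0(\xi)\beta_0(\eta)+(1-\beta_0(\xi))\beta_0(\eta)+\beta_0(\xi)(1-\beta_0(\eta))+(1-\beta_0(\xi))(1-\beta_0(\eta))$ and multiply $\widehat\sigma$ by each factor, obtaining $\sigma=\sigma_{\mathrm{sm}}+\sigma_{\mathrm{I}}+\sigma_{\mathrm{II}}+\sigma_{\mathrm{III}}$; this splitting commutes with the dilations, so it is enough to bound $\sup_{a,b}|f|*|\sigma_\bullet^{(2^a,2^b)}|$ on $L^p$ separately. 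The low-low piece $\sigma_{\mathrm{sm}}$ has a Schwartz kernel dominated by $(1+|x|)^{-N}(1+|y|)^{-N}$, hence is controlled by the strong maximal function $M_S$. For the mixed piece $\sigma_{\mathrm{I}}$ (frequency high in $\xi$, low in $\eta$) I would peel off the degenerate part whose multiplier is $\widehat\sigma(\xi,0)(1-\beta_0(\xi))\otimes\beta_0(\eta)$: since $\widehat\sigma(\xi,0)=\sin\xi/\xi$ is the symbol of a one-dimensional average in $x$ and $\beta_0(\eta)$ of a smoothing in $y$, the associated dilation-maximal operator is dominated by $M_xM_y$; the leftover, by the flatness and decay bounds, has multiplier $\lesim\min(|\eta|,(1+|\xi|)^{-\delta})$ on its support. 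Symmetrically, $\sigma_{\mathrm{II}}$ splits into a part governed by $\widehat\sigma(0,\eta)$, which is the symbol of convolution in $y$ with a fixed compactly supported $L^1$ density whose dilation-maximal operator is dominated by $M_y$, plus a leftover with multiplier $\lesim\min(|\xi|,(1+|\eta|)^{-\delta})$. Finally, $\sigma_{\mathrm{III}}$ and the two leftovers are handled together: decomposing their frequency supports into boxes $|\xi|\sim2^k$, $|\eta|\sim2^l$, the relevant multiplier has sup-norm $\lesim 2^{-c(\alpha)(k+l)}$ for some $c(\alpha)>0$ (using $(|\xi|+|\eta|)^{-\delta}$ on $\sigma_{\mathrm{III}}$ and the $\min$-bounds otherwise); for each fixed $(k,l)$ the rescaled pieces form a boundedly overlapping two-parameter Littlewood--Paley family, so their maximal operator is bounded by $2^{-c(\alpha)(k+l)}$ times an $L^p$-bounded square function, and summing over $k,l\ge0$ completes the estimate. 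Adding the four contributions proves the lemma.

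I expect the main difficulty to be the two-parameter aspect rather than any individual estimate: the frequency regions where one of the variables is small carry no curvature gain and must be reduced by hand to genuinely one-dimensional maximal operators before the square-function machinery applies to the remaining elliptic region, and the final square-function bound for $p$ close to $1$ requires the two-parameter vector-valued Calder\'on--Zygmund estimate for the rescaled kernels, which is where the smoothness of $\widehat\sigma$ and of the cutoffs has to be quantified carefully.
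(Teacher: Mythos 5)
Your proposal is correct in outline and takes a genuinely different route from the paper, although both arguments rest on the same two pillars: Fourier decay of the curve measure coming from $\alpha\neq 1$, and reduction of the degenerate frequency regions to one-dimensional and strong maximal functions. You recast $\mc{M}_{\mathrm{dyad}}$ as a two-parameter maximal operator over anisotropic dilates $\sigma^{(c,d)}$ of a single curve measure and run the classical Duoandikoetxea--Rubio de Francia scheme (this is essentially the viewpoint of Marletta--Ricci \cite{MR} and of Hong--Kim--Yang \cite{HKY}, to whom the paper attributes the lemma): split $\widehat{\sigma}$ into low-low, high-low, low-high and high-high parts, peel off the tensorized degenerate symbols $\widehat\sigma(\xi,0)$ and $\widehat\sigma(0,\eta)$, which are controlled by iterated one-dimensional maximal functions, and treat the elliptic remainder by a product Littlewood--Paley square function with geometric decay coming from bounds of the type $\min\bigl(|\eta|,(1+|\xi|)^{-\delta}\bigr)$. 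The paper instead compares $M^j_k$ with the tensorized curve average $\widetilde M^j_k$ (for the $L^2$ bound, via a four-case multiplier analysis) and with the double difference $\sigma^j_k=\mu^j_k*[(\phi^j_k-\delta)\otimes(\tilde\phi^j_k-\delta)]$, which is essentially your high-high piece in disguise; but it then decomposes conically according to the location of the stationary point of the phase rather than into product dyadic boxes, and closes the $L^p$ range by the iterative interpolation of Nagel--Stein--Wainger. Your decomposition is adapted to the two-parameter dilation structure, theirs to the stationary-phase structure; both are viable, and yours is arguably the more systematic template.

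Two points in your sketch need care. First, the vertical scale $2^j\epsilon^\alpha$ is not a power of $2$ when $\alpha\notin\Z$, and it cannot be rounded (perturbing $d$ in $\sigma^{(c,d)}$ displaces the curve by its full height, so there is no pointwise domination); the index set is really the sheared family $\{(a,\,j+\alpha a):a,j\in\Z\}$ rather than $\Z^2$. This is harmless, since each dyadic box in parameter space contains $O(1)$ indices, so the bounded-overlap and square-function steps survive, but $\sup_{a,b\in\Z}$ as written does not dominate $\mc{M}_{\mathrm{dyad}}$. Second, and more substantially, the endpoint $p\to 1$: invoking a two-parameter vector-valued Calder\'on--Zygmund estimate for the rescaled high-high kernels is the heaviest and least routine step, since those kernels live on thin neighborhoods of a curve and are far from product kernels. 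The standard substitutes are either the vector-valued Fefferman--Stein route, accepting a sub-geometric loss in $(k,l)$ for the no-decay $L^p$ bound and interpolating with the $L^2$ gain, or the bootstrapping interpolation of \cite{NSW} used in the paper, in which the full maximal operator, once bounded for $p>p_1$, serves as the $q=\infty$ endpoint of a vector-valued interpolation that iteratively lowers the exponent to all $p>1$. You need to commit to one of these; once you do, the argument closes.
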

We postpone the proof of this fact until the end of this subsection and turn our attention toward the second term in \eqref{0814e7.35h}. By Minkowski's integral inequality, the $L^p$ norm of $A_{\tilde{u}_v(z),l} f(z)$ is at most
\[ \int_\R \left( \int_{\R^2} (f(x-t, y- \tilde{u}_v(z) [t]^\alpha) \psi_l( 2^{\frac{v}\a} r_z t))^p dz \right)^{1/p} \frac{dt}{|t|}.\]
where $r_z := (2^{-v} \tilde{u}_n(z) )^\beta\approx 1$. To bound this term, we apply the change of variables
\beq\label{Lipschitzchangevariables}
\left\{
\begin{array}{ll}
x_1 &:= x-t\\
y_1 &:= y-\tilde{u}_{v}(x, y)\,[t]^{\alpha}
\end{array}\right. .
\endeq
This is the only point at which we use the Lipschitz regularity of $u$ and it is also where we need to exploit the range restriction on $l$, i.e. that $2^l\le c_\alpha \epsilon_0 2^{\frac{v}\a}$.
Indeed, computing the determinant of the corresponding Jacobian $J$, we have
\beq\label{jacob}
\textrm{det}\,J=\left| \frac{\partial(x_1,y_1)}{\partial(x,y)}\right| = \left| \begin{array}{cc}
1 & 0\\
-\frac{\partial \tilde{u}_{v}}{\partial x}\,[t]^{\alpha} & 1- \frac{\partial \tilde{u}_{v}}{\partial y}\,[t]^{\alpha}
\end{array} \right|=1- \frac{\partial \tilde{u}_{v}}{\partial y}\,[t]^{\alpha}\,.
\endeq
Observe that $t$ in \eqref{Lipschitzchangevariables} obeys
$$|[t]^{\alpha}|\approx 2^{l-\frac{v}{\a}}\le c_\alpha \epsilon_0$$
and hence
\beq\label{detj}
|\textrm{det}\,J|= \Big|1- \frac{\partial \tilde{u}_{n}}{\partial y}\,[t]^{\alpha}\Big|\geq 1- \tilde{c}_\alpha\|\tilde{u}_{n}\|_{\mathrm{Lip}} \epsilon_0 \ge \frac12,
\endeq
where we have chosen $\epsilon_0$ to be smaller than $(2 \tilde{c}_\alpha \|u\|_{\mathrm{Lip}})^{-1}$. Here $\tilde{c}_\alpha$ is a constant depending only on $\alpha$. This shows that the change of variables \eqref{Lipschitzchangevariables} is valid and
therefore
\beq
\|A_{\tilde{u}_v(z),l} f \|_p \lesssim \|f\|_p
\endeq
holds for all $p>1$. Hence we can infer that also \eqref{eqn:627} holds. This concludes the proof of \eqref{2204e1.34} in the case $p=2$.\\

It remains to prove \eqref{2204e1.34} for the remaining values of $p$. By interpolation with \eqref{2204e1.34} at $p=2$, it suffices to prove that we have the estimate without decay,
\beq\label{eqn:635}
\Big\| \Big(\sum_{v\in E_l} \Big| A_{u_{v}(z),l}\pi_{m-l+\frac{v}{\alpha}} \pj_{-\alpha l+k}f(z) \Big|^2\Big)^{1/2} \Big\|_p\lesssim \|\pj_{-\alpha l+k}f\|_p\sim \Big\|\Big(\sum_{v\in \Z}\Big|\pi_{-l+m+\frac{v}{\alpha}}\pj_{-\alpha l+k}f\Big|^2\Big)^{1/2}\Big\|_p
\endeq
for all $p>1$. We will again use the interpolation trick from Nagel, Stein and Waigner \cite{NSW}. Thus we consider the more general estimate
\beq\label{2204e1.48}
\Big\| \Big(\sum_{v\in E_l} \Big| A_{u_{v}(z),l}\pi_{m-l+\frac{v}{\alpha}} \pj_{-\alpha l+k}f(z) \Big|^q\Big)^{1/q} \Big\|_p\lesssim \Big\|\Big(\sum_{v\in \Z}\Big|\pi_{-l+m+\frac{v}{\alpha}}\pj_{-\alpha l+k}f\Big|^q\Big)^{1/q}\Big\|_p
\endeq
for $1<p<\infty$ and $1<q\leq \infty$.
 
Recall that in Section \ref{sec:maxthmpgtr2} we proved that
\beq\label{0890e6.39}
\|\mc{M}^{(\alpha)}_{u, \epsilon_0}  f\|_p \lesim \|f\|_p\text{ for all } p>2.
\endeq
Moreover, we have the pointwise bound
\beq\label{0830e6.39h}
|A_{u_v(z),l}f(z)|\lesim \mc{M}_{\mathrm{dyad}}f(z)+\mc{M}^{(\alpha)}_{u, \epsilon_0}  f(z)\text{ for all }v,\,l\in \Z.
\endeq

Note that this pointwise estimate does not hold if $u_v$ is replaced by $\tilde{u}_{v}$, because we do not know how $\tilde{u}_{v}$ behaves outside of the region where it coincides with the original Lipschitz function $u$.

From \eqref{0890e6.39}, \eqref{0830e6.39h} and Lemma \ref{2704lemma3.6} we see that \eqref{2204e1.48} holds for $q=\infty$ and $p>2$. Moreover, by \eqref{eqn:627} we know that \eqref{2204e1.48} holds for all $q=p>1$. By interpolation we obtain \eqref{2204e1.48} for $q=2$ and $p>4/3$. This in turn implies that
\beq
\|\mc{M}^{(\alpha)}_{u, \epsilon_0}  f\|_p \lesim \|f\|_p,
\endeq
for all $p>4/3$. Iterating this process sufficiently many times we prove \eqref{eqn:635} for every $p>1$. This finishes the proof of \eqref{2204e1.34} and thereby also concludes the proof of Theorem \ref{thm:mainmax}.\\

We close this subsection with several remarks regarding the last part of the proof:
\begin{enumerate}
\item In the proof of \eqref{eqn:627}, we did not need the full strength of Lemma \ref{2704lemma3.6}. For that purpose it would have been sufficient to a fixed $j$ in the definition of $\mc{M}_{\mathrm{dyad}}$ instead of taking the supremum over $j\in\Z$. The full strength of Lemma \ref{2704lemma3.6} is only needed during the interpolation procedure that is used for proving \eqref{2204e1.34} for $p<2$.

\item One might wonder why we did not use the auxiliary function $\tilde{u}_{v}$ right away, rather than first introducing the auxiliary function $u_{v}$. Again, the point is that this would cause the interpolation argument for $p<2$ to fail, because the pointwise estimate \eqref{0830e6.39h} would no longer be available.
\end{enumerate}

\subsection{Proof of Lemma \ref{2704lemma3.6}}\label{section-lacunary}

Before we start the proof of this lemma, we need to emphasize that this lemma was first established by Hong, Kim and Yang \cite{HKY}. Indeed, their results have a much wider scope, in the sense that they considered general polynomials in all dimensions. Due to such generality, their proof is significantly more intricate. For the sake of completeness, we provide here the proof of Lemma \ref{2704lemma3.6}. 

We will present the argument for $\alpha=2$ and $[t]^2=t^2$; the general case follows \emph{mutatis mutandis}.\\

For $k,\,j\in \Z$ we set
\beq\label{modjk}
M^j_k f(x, y):=\Big|\frac{1}{2^k}\int_{2^k}^{2^{k+1}}f(x-t, y-2^{-j} t^2)dt\Big|.
\endeq
Fix $j\in \Z$ consider the maximal operator along the parabola $(t, 2^{-j} t^2)$, which is given by
\beq
M^j f(x, y):=\sup_{k\in \Z} M^j_k f(x, y).
\endeq
Hence
\beq\label{Mmdef}
\mc{M}_{\mathrm{dyad}}f(x, y)=\sup_j M^j f(x, y)=\sup_{k,j}M^j_kf(x, y).
\endeq
A key idea to prove the $L^2$ bounds for $\mc{M}_{\mathrm{dyad}}$ is to compare its components $M^j_k$ with some suitable smoother versions $\widetilde{M}^j_k$ and prove bounds for the corresponding square-function
\beq\label{1912ee1.7}
\Big(\sum_{j\in \Z}\sum_{k\in \Z} |M^j_k - \widetilde{M}^j_k|^2 \Big)^{1/2}\,.
\endeq
As a first attempt for finding a good candidate for $\widetilde{M}^j_k$ one may consider the \textit{linearized} model
\beq\label{1401ee2.5}
\bar{M}^j_kf(x, y):=\frac{1}{2^k\cdot 3\cdot 2^{2k}} \int_{2^k}^{2^{k+1}}\int_{2^{2k}}^{2^{2k+2}}f(x-t, y-2^{-j}\tau)dtd\tau.
\endeq
It turns out that while the operator $\big(\sum_{k\in \Z} |M^j_k - \bar{M}^j_k|^2 \big)^{1/2} $
is indeed bounded on $L^2$, one does not have a good control over the double indexed sum in \eqref{1912ee1.7}. For this reason, one needs to choose a variant that is closer in spirit to \eqref{modjk}, which is simultaneously smoother and preserves the \textit{quadratic} nature of the initial operator:
\beq\label{1401ee2.15}
\widetilde{M}^j_k f(x, y):= \frac{1}{2^k \cdot 2^k}\int_{2^k}^{2^{k+1}}\int_{2^k}^{2^{k+1}}f(x-t, y-2^{-j}\tau^2)dt d\tau.
\endeq
It is not difficult to see that we have the pointwise bound
\beq
\widetilde{M}^j_k f(x, y)\lesim M_S f(x, y)\,,
\endeq
and thus, in order to prove the $L^p$ bound of $\mc{M}_{\mathrm{dyad}}$, it suffices to prove the $L^p$ bound for
\beq\label{0814e7.52h}
\Big(\sum_{j\in \Z}\sum_{k\in \Z} |M^j_k - \widetilde{M}^j_k|^2 \Big)^{1/2} .
\endeq
As opposed to the general $L^p$ case, the proof for the $L^2$ boundedness of \eqref{0814e7.52h} is significantly simpler and thus we choose to present it first.

\subsubsection{\bf $L^2$ boundedness}

In order to prove the $L^2$ bounds we rely on Plancherel's theorem. Naturally, we start by analyzing the multipliers for the corresponding operators $M^j_k$ and $\widetilde{M}^j_k$.

The multiplier of $M^j_k$ is given by
\beq\label{0814e7.54h}
m^j_k(\xi, \eta):=\int_1^2 e^{i2^k t\xi+i 2^{2k-j}t^2\eta}dt\,,
\endeq
while the multiplier of the operator \eqref{1401ee2.15} is given by
\beq\label{0814e7.53h}
\begin{split}
\widetilde{m}^j_k(\xi, \eta):=\int_1^{2}\int_1^{2} e^{i 2^k t \xi+ i2^{2k-j}\tau^2\eta}dtd\tau\,.
\end{split}
\endeq
By Plancherel's theorem, it suffices to prove that
\beq
\sum_{j\in \Z, k\in \Z}\left| m^j_k(\xi,\eta)-\widetilde{m}^j_k(\xi, \eta)\right|^2 \le C,
\endeq
for all $(\xi, \eta)\in \R^2$ and some universal constant $C>0$. This in turn follows from
\beq\label{0814e7.56h}
\sum_{j\in \Z, k\in \Z}\left| m^j_k(\xi,\eta)-\widetilde{m}^j_k(\xi, \eta)\right| \lesim 1, \text{ for all } \xi\sim1 \text{ and } \eta\sim 1.
\endeq

\begin{itemize}
\item \textbf{Case I: $k+10\ge j\ge 0$.}  Applying van der Corput's lemma, we have
\beq
\left| m^j_k(\xi,\eta)-\widetilde{m}^j_k(\xi, \eta)\right| \lesim 2^{-(k-\frac{j}{2})}.
\endeq
The last display is easily seen to be summable within the range $k\ge j\ge 0$.\\

\item\textbf{Case II: $4k\ge j> k+10\ge 0$ or $k\ge 0, j\le 0$ or $j\le 4k\le 0$.} For simplicity, we only detail the case $4k\ge j> k\ge 0$. Under this assumption, the phase functions in \eqref{0814e7.53h} and \eqref{0814e7.54h} do not admit any critical point. Thus,
\beq
\left| m^j_k(\xi,\eta)-\widetilde{m}^j_k(\xi, \eta)\right| \lesim 2^{-k}.
\endeq
By summing over $10+k<j\le 4k$, we obtain the upper bound $3k\cdot 2^{-k}$, which is summable in $k\in \N$.\\

\item\textbf{Case III: $j\ge 4k\ge 0$ or $k\le 0, j\ge 0$ or $k\le j\le 0$.} Again we only detail one case, that of $j\ge 4k\ge 0$. By definition, we have
\beq
\begin{split}
& m^j_k(\xi,\eta)-\widetilde{m}^j_k(\xi, \eta)=\int_1^{2}e^{i 2^k t\xi+i 2^{2k-j}t^2\eta}dt- \int_1^{2}\int_1^2 e^{i 2^k t\xi+i 2^{2k-j}\tau^2\eta}dt d\tau\\
&= \int_1^{2}e^{i2^k t\xi}\left(e^{i 2^{2k-j}t^2\eta}-1\right)dt-\int_1^{2}\int_1^2 e^{i2^k t\xi}\left(e^{i 2^{2k-j}\tau^2\eta}-1\right)dt d\tau.
\end{split}
\endeq
By the fundamental theorem of calculus, the last display can be bounded by $2^{2k-j}$, which is summable in $j\ge 4k\ge 0$.

Here we mention that the term $\bar{M}^j_k f$ from \eqref{1401ee2.5} would also work in this case. \\

\item\textbf{Case IV: $4k\le j\le k\le 0$.} In this case we will see the main difference between $\bar{M}^j_k$ and $\widetilde{M}^j_k$. By definition,
\beq\label{08300e6.55}
\begin{split}
& m^j_k(\xi,\eta)-\widetilde{m}^j_k(\xi, \eta)=\int_1^{2}e^{i 2^k t\xi+i 2^{2k-j}t^2\eta}dt- \int_1^{2}\int_1^2 e^{i 2^k t\xi+i 2^{2k-j}\tau^2\eta}dt d\tau\\
&= \int_1^{2}\left(e^{i2^k \tau}-1\right) e^{i 2^{2k-j}\tau^2}d\tau-\int_1^{2}\int_1^2 \left(e^{i2^k t}-1\right) e^{i 2^{2k-j}\tau^2}dt d\tau.
\end{split}
\endeq
By the fundamental theorem in calculus, we bound the last display by $2^k$. Summing over $4k\le j\le k$, we obtain $|k|\cdot 2^k$, which is summable for $k\le 0$.
\end{itemize}

This finishes the proof of the $L^2$ boundedness of our operator defined in \eqref{Mmdef}.

\subsubsection{\bf $L^p$ boundedness}

In what follows we will make use of some ideas from Nagel, Stein and Wainger \cite{NSW} and Carlsson, Christ et al. \cite{CCC}. We denote
\beq
M_k^j f := \mu_k^j*f, \text{ with } \widehat{\mu_k^j} (\xi, \eta) :=m_k^j(\xi, \eta).
\endeq
A key insight in \cite{CCC}, is to compare $\mu_k^j$ with $\sigma_k^j$, where
\beq\label{0815e7.64}
\sigma_k^j:=\mu_k^j* [(\phi_k^j-\delta)\otimes (\tilde{\phi}_k^j-\delta)].
\endeq
Here $\phi$ and $\tilde{\phi}$ are two  non-negative smooth functions supported on $[-1, 1]$ having mean one, while
\beq
\phi^j_k(t):=2^{-k}\phi(2^{-k}t)\,,\:\:\:\:\:\:\:\tilde{\phi}^j_k(t):=2^{-2k-2+j}\tilde{\phi}(2^{-2k-2+j}t)\,,
\endeq
and $\delta$ is the Dirac point mass at the origin.
The meaning of the tensor product in \eqref{0815e7.64} is that its first component acts on the first variable while its second component acts on the second variable.

The difference $\mu_k^j-\sigma_k^j$ can be bounded by the strong maximal operator, by noticing that the corresponding multiplier has fast decay at infinity. In particular,
\beq\label{1302ee5.18}
\sup_{k, j}|(\mu_k^j-\sigma_k^j)*f|\lesim M_S f.
\endeq
 Thus, it only remains to bound $\sup_{k, j}|\sigma_k^j* f|$. For this, we will perform a conical Littlewood-Paley decomposition for the function $f$:
\beq
P^{\mathrm{Cone}}_k f(x, y) := \int_{\R} \hat{f}(\xi, \eta) \psi_k\Big(\frac{\xi}{\eta}\Big) e^{ix\xi+i y\eta}d\xi d\eta,
\endeq
and write
\beq\label{1002ee5.19}
\sup_{k, j}|\sigma_k^j* f|=\sup_{k, j}\Big|\sigma_k^j* \Big(\sum_{l\in \Z} P^{\mathrm{Cone}}_{k-j+l}f\Big)\Big|
\endeq
\begin{rem}
The case $l=0$ corresponds to the case where the phase function of $\widehat{\mu^j_k}=m_k^j$ has a critical point.
\end{rem}
To bound the term \eqref{1002ee5.19} on $L^p$, by the triangle inequality, it suffices to prove that
\beq\label{1002ee5.20}
\|\sup_{k, j}|\sigma_k^j* ( \pc_{k-j+l}f)|\|_{p} \lesim 2^{-\gamma_p |l|}\|f\|_p,
\endeq
for some $\gamma_p>0$. This decay in $l$ comes from the fact that away from the case $l=0$ one never sees the critical point of the phase function of $\widehat{\mu^j_k}$.

In the following, we only focus on the case $l=0$. The case of general $l\in \Z$ follows a similar approach, with the extra-twist of involving the non-stationary phase method (integration by parts).  We refer to Carlsson, Christ et al. \cite{CCC} for details. \\

Roughly speaking, estimate \eqref{1002ee5.20} follows from interpolating the $L^2$ bound of $\mc{M}_{\mathrm{dyad}}$ with some simple endpoint bound. This interpolation is again in the spirit of Nagel, Stein and Wainger \cite{NSW}. To enable this, we need to rewrite \eqref{1002ee5.20} in a slightly different way:
\beq\label{0815e7.70h}
\sup_{j, k}|\sigma_k^j* \pc_{k-j}f|=\sup_{j, k}|\sigma_k^{k-j}* \pc_{j}f|=:\sup_{j, k}|\tilde{\sigma}_k^j *\pc_{j}f|,
\endeq
where
\beq
\tilde{\sigma}_k^j:=\sigma_k^{k-j}.
\endeq
We bound \eqref{0815e7.70h} by the square function:
\beq\label{0815e7.72}
\Big( \sum_j \sup_k |\tilde{\sigma}_k^j * (\pc_{j}f)|^2\Big)^{1/2}.
\endeq
To prove the $L^p$ boundedness of the last expression, we first place it in a more general framework by considering inequalities of the form
\beq\label{0815e7.73}
\Big\| \Big( \sum_j \sup_k |\tilde{\sigma}_k^j * (\pc_{j}f)|^{q_1}\Big)^{1/q_1} \Big\|_{q_2} \lesim \|(\sum_j|\pc_j f|^{q_1})^{1/q_1}\|_{q_2}.
\endeq
For the case $q_1=\infty$ and $q_2=2$, by going back to $\sigma=\mu-(\mu-\sigma)$, it is not difficult to see that the left hand side of the last expression can be bounded by $\|\mc{M}_{\mathrm{dyad}}f\|_2+\|M_S f\|_2$, which, by the $L^2$ bound in the previous part, can be further bounded by $\|f\|_2$.

Hence, it remains to prove that for each fixed $j$ and each $q_1>1$, one has
\beq\label{0815e7.74}
\|\sup_{k}|\tilde{\sigma}_k^j * (\pc_j f)||\|_{q_1} \lesim \|\pc_j f\|_{q_1}.
\endeq
That \eqref{0815e7.72} is bounded on $L^p$ for all $p>1$ follows from an iterative interpolation argument in the spirit of \cite{NSW}.\\

Now we prove \eqref{0815e7.74}.

We bound its left hand side by a square function, and prove that
\beq\label{0815e7.75}
\|(\sum_{k}|\tilde{\sigma}_k^j *  (\pc_j f)|^2)^{1/2}|\|_{q_1} \lesim \|\pc_j f\|_{q_1}.
\endeq
By a simple anisotropic scaling, it suffices to consider the case $j=0$.

 Notice that working with the projection operator $\pc_0 f$ means we are within a frequency cone having $\{(\xi, \eta): \xi\sim \eta\}$. We continue by performing a finer frequency decomposition, as follows: denote by $\pc_{0, k}$ the frequency projection into the region $\xi\sim 2^k, \eta\sim 2^k$; then \eqref{0815e7.75} is equivalent to
\beq
\|(\sum_{k}|\tilde{\sigma}_k^0 *(\sum_{l\in \Z}\pc_{0, k+l}  f)|^2)^{1/2}|\|_{q_1} \lesim \|\pc_0 f\|_{q_1}.
\endeq
By the triangle inequality, it suffices to prove for some $\lambda_{q_1}>0$ that
\beq\label{0815e7.77}
\|(\sum_{k}|\tilde{\sigma}_k^0 *(\pc_{0, k+l}  f)|^2)^{1/2}|\|_{q_1} \lesim 2^{-\lambda_{q_1} |l|}\|\pc_0 f\|_{q_1}.
\endeq
The above estimate for $q_1=2$ follows simply from Plancherel's theorem and the mean zero property of $\tilde{\sigma}^0_k$. Thus, by interpolation and standard Littlewood-Paley theory, it suffices to prove that
\beq\label{last}
\|(\sum_{k}|\tilde{\sigma}_k^0 *(\pc_{0, k+l}  f)|^2)^{1/2}|\|_{q_1} \lesim \|(\sum_k |\pc_{0, k+l}f|^2)^{1/2} \|_{q_1}.
\endeq
As before, we first consider this estimate in a more general framework of inequalities of the form
\beq
\|(\sum_{k}|\tilde{\sigma}_k^0 *(\pc_{0, k+l}  f)|^{p_1})^{1/p_1}|\|_{q_1} \lesim \|(\sum_k |\pc_{0, k+l}f|^{p_1})^{1/p_1} \|_{q_1}.
\endeq
The case $p_1=\infty$ and $q_1=2$ follows from the $L^2$ boundedness of $\mc{M}_{\mathrm{dyad}}$ while the case $p_1=q_1>1$ is trivial. Applying the usual interpolation trick we obtain that \eqref{last} holds for all $q_1>1$. $\Box$

\appendix
\section{A local smoothing estimate}\label{AppendixA}

Let $\psi_0: \R\to \R$ be a non-negative smooth function supported on $[1/2, 3]$ with $\psi_0(t)=1$ for each $t\in [1, 2]$. Moreover, let $\varphi_0: \R\to \R$ be a non-negative smooth function supported on $[-3, 3]$ with $\varphi_0(t)=1$ for each $t\in [-1, 1]$. For a given positive real number $u$, let $A_u$ denote the averaging operator 
\beq\label{eqn:avgoperator}
A_u f(x, y) := \int_{\R} f(x-u t, y-u t^{\alpha})\psi_0(t)dt.
\endeq
Here $\alpha$ is a positive real number with $\alpha \neq 1$. For $k\in \Z$, let $P_k$ denote a Littlewood-Paley projection operator on the plane, say 
\beq
P_k f(x, y) := \int_{\R^2} e^{ix\xi+iy \eta} \hat{f}(\xi, \eta) \psi_0(\frac{1}{2^k}(\xi^2+\eta^2)^{\frac{1}{2}}) d\xi d\eta.
\endeq
Then we have 
\begin{thm}\label{0813theorem8.1}
Let $k\in \N$ be a positive integer. For each positive $\alpha\neq 1$, and each $p>2$, there exists $\gamma_{p, \alpha}>0$ such that 
\beq\label{supestimate}
\|\sup_{u\in [1, 2]} |A_u P_k f|\|_p \lesim 2^{-\gamma_{p, \alpha} \cdot k} \|f\|_p.
\endeq
Here the implicit constant depends only on $p$ and $\alpha$.
\end{thm}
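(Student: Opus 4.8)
The plan is to reduce \eqref{supestimate} to a fixed-exponent local smoothing estimate for the curve average, and to prove the latter by combining stationary phase with decoupling for the cone; throughout we may assume $k$ is large. The first step is to linearise the supremum: starting from the elementary one-variable inequality $\|g\|_{L^\infty([1,2])}^p\le\|g\|_{L^p([1,2])}^p+p\|g\|_{L^p([1,2])}^{p-1}\|g'\|_{L^p([1,2])}$, applied with $g(u)=A_uP_kf(z)$, then integrating in $z$ and using H\"older's inequality in $z$, one obtains
\[
\Big\|\sup_{u\in[1,2]}|A_uP_kf|\Big\|_p^p\lesssim\|A_uP_kf\|_{L^p(\R^2\times[1,2])}^p+\|A_uP_kf\|_{L^p(\R^2\times[1,2])}^{p-1}\,\|\partial_u(A_uP_kf)\|_{L^p(\R^2\times[1,2])}.
\]
Since $\partial_u(A_uf)=\int(-t\,\partial_xf-t^\alpha\partial_yf)(x-ut,y-ut^\alpha)\psi_0(t)\,dt$ and $P_k$ turns $\partial_x,\partial_y$ into Fourier multipliers of size $O(2^k)$, the operator $2^{-k}\partial_uA_u$ is of exactly the same type as $A_u$. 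Hence it suffices to prove a local smoothing bound $\|A_uP_kf\|_{L^p(\R^2\times[1,2])}\lesssim_{p,\alpha}2^{-\beta k}\|f\|_p$ for some $\beta=\beta(p)>1/p$: this also gives $\|\partial_uA_uP_kf\|_{L^p(\R^2\times[1,2])}\lesssim2^{(1-\beta)k}\|f\|_p$, and the display then yields \eqref{supestimate} with $\gamma_{p,\alpha}=\beta-1/p$.

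For the second step I would write $A_uP_kf=f*\mu_u^{(k)}$ with multiplier $\widehat{\mu_u^{(k)}}(\xi,\eta)=\psi_0(2^{-k}|(\xi,\eta)|)\,m(u\xi,u\eta)$, where $m(\xi,\eta)=\int e^{-i(t\xi+t^\alpha\eta)}\psi_0(t)\,dt$, and use that on the support of $\psi_0$ the curve $(t,t^\alpha)$ has non-vanishing curvature (because $\alpha\neq0,1$). After discarding the frequency region in which the phase $t\xi+t^\alpha\eta$ has no critical point in the support of $\psi_0$ — there $m$ decays like $|(\xi,\eta)|^{-N}$, contributing $O(2^{-Nk})$ to \eqref{supestimate} — a stationary phase expansion in $t$ represents $m$ on the remaining cone as $m(\xi,\eta)=a(\xi,\eta)e^{i\Phi(\xi,\eta)}+O(|(\xi,\eta)|^{-N})$, where $\Phi$ is homogeneous of degree $1$ with a Hessian of rank exactly one (this is where $\alpha\neq1$ is used) and $a$ is a symbol of order $-1/2$. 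Thus, modulo acceptable errors, $A_uP_kf$ is a finite sum of operators equal to $2^{-k/2}$ times a Fourier integral operator with phase $x\xi+y\eta\pm u\Phi(\xi,\eta)$ and amplitude of order $0$ depending slowly on $u$; the frequencies $(\xi,\eta,\pm\Phi(\xi,\eta))$ with $|(\xi,\eta)|\sim2^k$ run over the $2^k$-dilate of a truncated cone whose base curve has non-vanishing curvature.

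For the third step I would extract the $2^{-k/2}$ amplitude factor and run Wolff's reduction of local smoothing to cone decoupling. Decompose the annulus $|(\xi,\eta)|\sim2^k$ into $\sim2^{k/2}$ angular sectors of width $2^{-k/2}$, so that the corresponding pieces of $A_uP_kf$ have space-time Fourier support in plates of the $2^k$-dilated cone, and apply the $\ell^2L^p$ decoupling inequality for the cone in $\R^3$; as explained in the discussion around Appendix~\ref{AppendixB}, the elementary range $2\le p\le4$ is all that is needed here. Combining the decoupling inequality with standard single-plate estimates (on a plate the cone is flat up to $O(1)$ errors, so each piece is a frequency-localised shear in the $(x,y,u)$ variables and the $2^{-k/2}$ amplitude gain survives) and with a square function estimate to recombine the sectors (valid for $p\ge2$) produces, for every $p$ with $2<p\le4$, a net gain
\[
\|A_uP_kf\|_{L^p(\R^2\times[1,2])}\lesssim2^{-(1/p+\varepsilon_0)k}\|f\|_p
\]
for some $\varepsilon_0=\varepsilon_0(p)>0$: this is the ``$\varepsilon$-amount'' of local smoothing, and it verifies the hypothesis $\beta=1/p+\varepsilon_0>1/p$ of Step~1, proving \eqref{supestimate} for $2<p\le4$ with $\gamma_{p,\alpha}=\varepsilon_0$. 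For $p>4$ one interpolates the $p=4$ bound with the trivial estimate $\|\sup_u|A_uP_kf|\|_\infty\lesssim\|f\|_\infty$.

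The main obstacle is the variable amplitude $a(u\xi,u\eta)$ produced by stationary phase in Step~2: it is only slowly varying in $u$ rather than genuinely constant, and to feed it into the cone decoupling inequality one has to expand it in a rapidly convergent Fourier series in $u\in[1,2]$ and absorb each mode as an admissible modulation (equivalently, exploit that it is essentially constant at the scale of a single plate and sum by parts). A secondary technical point is the careful bookkeeping of the reduction to the stationary region and of the $O(2^{-Nk})$ tails, together with verifying that the perturbed operator $2^{-k}\partial_uA_u$ and the single-plate operators are genuinely of curve-average type, so that Steps~1 and~3 apply to them without change.
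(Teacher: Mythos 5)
Your proposal follows essentially the same route as the paper's Appendix \ref{AppendixA}: stationary phase turns $A_u P_k$ into a $2^{-k/2}$-damped Fourier integral operator with conical phase, Wolff's reduction via cone decoupling in the range $2\le p\le 4$ yields the fixed-exponent local smoothing gain $2^{-(1/p+\varepsilon_0)k}$, and a Sobolev embedding in $u$ converts this into the maximal estimate, with interpolation handling $p>4$ --- the only cosmetic difference being that the paper uses the fractional embedding $\|h\|_{L^\infty_u}\lesssim\|\Delta_u^{s/2}h\|_{L^p_u}$ for $s>1/p$ (exploiting that the $u$-frequency support of $S_kf$ sits at $\tau\sim 2^k$), whereas you use the integer-derivative variant together with the observation that $2^{-k}\partial_u A_u$ is again a curve average, which requires exactly the same threshold $\beta>1/p$. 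The difficulty you flag concerning the $u$-dependent amplitude $a(u\xi,u\eta)$ is resolved in the paper by dividing out the clean symbol $(1+\xi^2+\eta^2)^{-1/4}$ and invoking the H\"ormander--Mikhlin theorem uniformly in $u\in[1,2]$, which is in the same spirit as your proposed fix.
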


In this section, we will prove Theorem \ref{0813theorem8.1} by reducing it to a decoupling inequality for cones (see Proposition \ref{weakdecoupling}) due to Bourgain \cite{Bou13} and Bourgain and Demeter \cite{BD15}. This follows the approach of Wolff \cite{Wolff00}. We will then provide a proof of the relevant decoupling inequality in the next section.

\subsection{Several reductions}

First of all, by applying the change of variable $t^{\alpha}\to s$, we see that it suffices to consider the case $\alpha>1$.  Secondly, to simplify our presentation, we will only work on the case $\alpha=2$. The other values of $\alpha>1$ can be handled in a similar way.

 We take the Fourier transform of $A_u f$:
\beq
\widehat{A_u f}(\xi, \eta)=\hat{f}(\xi, \eta) \int_{\R} e^{iut\xi+iut^{\alpha}\eta}\psi_0(t)dt.
\endeq
By the stationary phase computation (for instance see page 360 in Stein \cite{Stein} or Lemma 1.2 in Iosevich \cite{Io}), 
\beq\label{guoa.5}
\int_{\R} e^{it\xi+it^2\eta}\psi_0(t)dt=a(\xi, \eta) e^{i\frac{\xi^{2}}{\eta}}+a_{\infty}(\xi, \eta).
\endeq
Here $a_{\infty}(\xi, \eta)$ is a smooth symbol, and $a(\xi, \eta)$ is a symbol belonging to the class $S^{-\frac{1}{2}}$ and is supported on 
\beq
\{(\xi, \eta): -10\le \frac{\xi}{\eta}\le -\frac{1}{10} \}.
\endeq 
The contribution from the smooth symbol $a_{\infty}(\xi, \eta)$ can be handled via a standard argument, see for instance Stein \cite{Ste76}. We omit the details. 

Now we turn to the former term on the right hand side of \eqref{guoa.5}. Define 
\beq
T f(u, x, y) := \int_{\R^2} \hat{f}(\xi, \eta) a(u\xi, u\eta) e^{ix\xi+iy\eta+iu \frac{\xi^{2}}{\eta}} d\xi d\eta,
\endeq
and 
\beq
T_k f(u, x, y) := \psi_0(u)\cdot  T\circ P_k f(u, x, y).
\endeq
Moreover, we define a variant of $T_k$ given by
\beq\label{0823a.8}
S_k f(u, x, y) := \psi_0(u)\cdot \int_{\R^2} \widehat{P_k f}(\xi, \eta) \varphi_0(-\frac{\xi}{100\eta}) (1+\xi^2+\eta^2)^{-\frac{1}{4}} e^{ix\xi+iy\eta+iu \frac{\xi^{2}}{\eta}} d\xi d\eta.
\endeq
We will prove 
\begin{thm}\label{theorema.2}
Let $k\in \N$ be a positive integer. For each $p>2$, there exists $\gamma_p>0$ such that 
\beq\label{0823a.9}
 \|S_k f\|_{L^p(\R^3)} \lesim 2^{-(\frac{1}{p}+\gamma_p)k}\|P_k f\|_p.
\endeq
\end{thm}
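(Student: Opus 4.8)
The plan is to recognise $S_k f$ as a function on $\R^3$ whose space--time Fourier support is concentrated in an $O(1)$--neighbourhood of a $2^k$--dilate of a fixed compact conic surface, and then to play the cone decoupling inequality of Proposition~\ref{weakdecoupling} against an elementary estimate for each decoupling plate. Concretely, taking the Fourier transform of $S_k f$ in all three variables $(u,x,y)$ and writing $\tilde a(\xi,\eta):=\varphi_0(-\tfrac{\xi}{100\eta})(1+\xi^2+\eta^2)^{-1/4}$, one finds
\[ \widehat{S_k f}(\sigma,\xi,\eta)=\widehat{P_k f}(\xi,\eta)\,\tilde a(\xi,\eta)\,\widehat{\psi_0}\!\left(\sigma-\tfrac{\xi^2}{\eta}\right). \]
On the support of $\widehat{P_k f}\,\tilde a$ we have $|(\xi,\eta)|\sim 2^k$ and $\xi/\eta\sim-1$, so since $\widehat{\psi_0}$ is Schwartz the right-hand side is, up to rapidly decaying tails, supported where $|\sigma-\xi^2/\eta|\lesssim 1$, i.e.\ in an $O(1)$--neighbourhood of a $2^k$--dilate of the compact conic piece $\{\sigma\eta=\xi^2,\ \xi/\eta\sim-1\}$. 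A dyadic decomposition in $|\sigma-\xi^2/\eta|\approx 2^j$, $j\ge 0$, together with the rapid decay of $\widehat{\psi_0}$, reduces us to the principal piece $j=0$, and an isotropic rescaling $(\sigma,\xi,\eta)\mapsto 2^{-k}(\sigma,\xi,\eta)$ --- which fixes the cone and acts as an $L^p$ isometry up to a single power of $2^k$ appearing identically on both sides of every inequality below --- puts us in the setting of cone decoupling at neighbourhood scale $\delta=2^{-k}$, hence with $\sim 2^{k/2}$ canonical plates $\theta$, namely the angular sectors $\{\xi/\eta\in[c_\theta,c_\theta+2^{-k/2}]\}$.

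\emph{Single plate estimate.} Write $S_kf=\sum_\theta(S_kf)_\theta$ for the plate decomposition, and let $P_k^\theta$ denote the associated sector projection applied to $P_kf$. I would prove
\[ \|(S_kf)_\theta\|_{L^p(\R^3)}\lesssim 2^{-k/2}\|P_k^\theta f\|_{L^p(\R^2)},\qquad 1<p<\infty, \]
uniformly in $\theta$ and $k$, using the identity $\tfrac{\xi^2}{\eta}=(2c_\theta\xi-c_\theta^2\eta)+\eta(\tfrac{\xi}{\eta}-c_\theta)^2$ valid on $\theta$: the first summand is linear, so $e^{iu(2c_\theta\xi-c_\theta^2\eta)}$ is absorbed into a translation of $(x,y)$ by a smooth function of $u$, while the second summand is $O(1)$ on $\theta$ (since $|\eta|\lesssim 2^k$ and $|\xi/\eta-c_\theta|\lesssim 2^{-k/2}$). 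Hence, for each $u$ in the support of $\psi_0$, after the translation the map $f\mapsto(S_kf)_\theta(u,\cdot)$ is convolution with a kernel whose multiplier is $2^{-k/2}$ times a normalised H\"ormander--Mihlin multiplier adapted to the $2^k\times 2^{k/2}$ plate $\theta$ (the amplitude $(1+\xi^2+\eta^2)^{-1/4}\sim 2^{-k/2}$ being a symbol of order $-1/2$), with bounds uniform in $u$; integrating in $u$ over a set of bounded measure gives the displayed bound.

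\emph{Assembly.} For $2\le p\le 4$, Proposition~\ref{weakdecoupling} and the single plate estimate give $\|S_kf\|_{L^p(\R^3)}\lesssim_\varepsilon 2^{\varepsilon k}2^{-k/2}\big(\sum_\theta\|P_k^\theta f\|_p^2\big)^{1/2}$. Since there are $\sim 2^{k/2}$ plates, H\"older's inequality yields $\big(\sum_\theta\|P_k^\theta f\|_p^2\big)^{1/2}\le 2^{\frac{k}{2}(\frac12-\frac1p)}\big(\sum_\theta\|P_k^\theta f\|_p^p\big)^{1/p}$, and the sectorial estimate $\big(\sum_\theta\|P_k^\theta f\|_p^p\big)^{1/p}\lesssim\|P_kf\|_p$ (valid for $p\ge 2$ by interpolating the orthogonality bound into $\ell^2(L^2)$ with the bound into $\ell^\infty(L^\infty)$ coming from the uniform $L^\infty$--boundedness of smooth sector projections) gives
\[ \|S_kf\|_{L^p(\R^3)}\lesssim 2^{\varepsilon k}2^{-\frac{k}{4}-\frac{k}{2p}}\|P_kf\|_p=2^{-(\frac1p+\gamma_p)k}\|P_kf\|_p, \]
with $\gamma_p=\tfrac14-\tfrac1{2p}-\varepsilon>0$ once $\varepsilon$ is chosen small enough, which is possible exactly because $p>2$. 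For $p>4$, where only the restricted decoupling range is available, one interpolates the case $p=4$ with the trivial endpoint bound $\|S_kf\|_{L^\infty(\R^3)}\lesssim\|P_kf\|_\infty$ (valid since $S_kf(u,\cdot)$ is convolution with a kernel of uniformly bounded $L^1$--norm), which again produces a positive $\gamma_p$.

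The step I expect to be most delicate is the first one: passing rigorously from the averaging operator to its Fourier--integral form, disposing of the non--principal Schwartz tails, and bookkeeping the rescaling so that Proposition~\ref{weakdecoupling} is applied with precisely the right powers of $2^k$. The single plate estimate and the assembly are comparatively routine, the decoupling inequality being invoked purely as a black box.
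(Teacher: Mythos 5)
Your argument is correct and lands on exactly the same exponent count as the paper ($\gamma_p$ up to $\tfrac14-\tfrac1{2p}$, positive precisely for $p>2$), and the overall strategy --- cone decoupling into $\sim 2^{k/2}$ angular plates plus an elementary per-plate bound --- is the one the paper uses. The implementations differ in two respects. First, the paper never verifies Fourier support in $\R^3$: it rescales $S_k$ to an extension-type operator $E_k$ with unit frequencies and $u\sim 2^k$, so that Proposition~\ref{weakdecoupling} applies verbatim; you instead keep frequencies at scale $2^k$ and must dispose of the $|\sigma-\xi^2/\eta|\approx 2^j$ tails and invoke the \emph{neighbourhood} form of cone decoupling rather than the extension-operator form as literally stated --- both steps are standard but are exactly the bookkeeping you flag, and for each $j\ge 1$ you should say what beats the growth (e.g.\ decoupling at the coarser scale, or the trivial $\ell^1$ sum over plates, against the $2^{-Nj}$ decay). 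Second, your plate input is a clean per-plate $L^p\to L^p$ multiplier bound $\|(S_kf)_\theta\|_p\lesssim 2^{-k/2}\|P_k^\theta f\|_p$ (via the exact splitting $\xi^2/\eta=(2c_\theta\xi-c_\theta^2\eta)+\eta(\xi/\eta-c_\theta)^2$), followed by the $\ell^p$-aggregation of sector projections; the paper's Lemma~\ref{organisation} instead proves the aggregate $\ell^p$ bound directly by interpolating an $L^2$ orthogonality estimate with an $L^\infty$ kernel estimate. Your route is slightly more self-contained per plate and makes the $p>4$ case explicit (interpolation with the trivial $L^\infty$ bound), which the paper leaves implicit. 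One presentational slip: the first display in your assembly step, $\|S_kf\|_p\lesssim 2^{\varepsilon k}2^{-k/2}\bigl(\sum_\theta\|P_k^\theta f\|_p^2\bigr)^{1/2}$, is the sharp $\ell^2$ decoupling of Bourgain--Demeter and does \emph{not} follow from Proposition~\ref{weakdecoupling} as stated; however, since your very next step is H\"older with the factor $2^{\frac{k}{2}(\frac12-\frac1p)}$, the composite inequality is exactly what the stated $\ell^p$ proposition gives, so the end-to-end argument is sound --- just cite the proposition for the composed $\ell^p$ inequality rather than for the $\ell^2$ intermediate.
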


We will apply Theorem \ref{theorema.2} to prove 
\beq\label{isotropicnorm}
\|\Delta_u^{\frac{s}{2}} T_k f\|_{L^p(\R^3)} \lesim 2^{-(\frac{1}{p}+\gamma_p)k+sk}\|f\|_p.
\endeq
Here 
\beq
\Delta_u^{\frac{s}{2}} F(u, x, y):= \int_{\R} e^{iu\tau} (1+|\tau|^2)^{\frac{s}{2}} \tilde{F}(\tau, x, y) d\tau,
\endeq
and $\tilde{F}$ denotes the partial Fourier transform taken in the $u$ variable only. The desired estimate \eqref{supestimate} follows from \eqref{isotropicnorm} via the fractional $L^{\infty}$ Sobolev embedding inequality 
$$\|h\|_{L_u^{\infty}(\R)}\lesim \|\Delta_u^{\frac{s}{2}}h\|_{L_u^p(\R)},$$
whenever $2\le p<\infty$ and $s>\frac{1}{p}$.

Now we come to the proof of \eqref{isotropicnorm}. By taking the partial Fourier transform of $S_k f$ in the $u$ variable, we see that  
$$(S_k f)^{\sim}(\tau, x, y)=\int_{\R^2} \widehat{P_k f}(\xi, \eta) \varphi_0(-\frac{\xi}{100\eta}) (1+\xi^2+\eta^2)^{-\frac{1}{4}} \check{\psi}_0(\tau-\frac{\xi^2}{\eta})e^{ix\xi+iy\eta} d\xi d\eta$$
is supported in the region $\{\tau\in \R: \tau\sim 2^k\}$. Hence by Young's inequality and Theorem \ref{theorema.2}, we obtain 
\beq\label{skestimate}
\|\Delta^{\frac{s}{2}}_u S_k f\|_{L^p(\R^3)} \lesim 2^{sk} \|S_k f\|_{L^p(\R^3)}\lesim 2^{-(\frac{1}{p}+\gamma_p)k+sk}\|f\|_p.
\endeq
The estimate \eqref{isotropicnorm} follows from \eqref{skestimate} via the standard H\"ormander-Mikhlin multiplier theorem, by realising that 
$$\Big| \partial_{\xi, \eta}^{\alpha} \frac{a(u\xi, u\eta)}{(1+\xi^2+\eta^2)^{-\frac{1}{4}}} \Big|\lesim_{\alpha} (1+|\xi|+|\eta|)^{-|\alpha|} \text{ for all multi-indices } \alpha\in \N^2_0,$$
uniformly in $u\in [1, 2]$.

\subsection{Proof of Theorem \ref{theorema.2} via a decoupling inequality}

Define a rescaled version of the operator $S_k$ from \eqref{0823a.8} by 
\beq
E_k f(u, x, y) := \psi_k(u)\cdot \int_{\R^2} \widehat{P_0 f}(\xi, \eta) \varphi_0(-\frac{\xi}{100 \eta}) (1+\xi^2+\eta^2)^{-\frac{1}{4}} e^{ix\xi+iy\eta+iu \frac{\xi^2}{\eta}} d\xi d\eta,
\endeq
where $\psi_k(u) :=\psi_0(\frac{u}{2^k})$. By applying a change of variables
\beq
\xi\to 2^k \xi, \eta\to 2^k \eta
\endeq
to $S_k f$ and the desired estimate \eqref{0823a.9}, we see that \eqref{0823a.9} is equivalent to 
\beq
\|E_k f\|_{L^p(\R^3)}\lesim 2^{\frac{k}{2}-\gamma_p \cdot k}\|f\|_p, \text{ for some } \gamma_p>0.
\endeq
We apply a conical frequency decomposition for $P_0 f$. Let 
\beq
\Sigma_k := \{\frac{j}{2^{\frac{k}{2}}}: j\in \Z, -100 \cdot 2^{\frac{k}{2}} \le j\le 100\cdot 2^{\frac{k}{2}}\}
\endeq
and decompose $P_0 f$ by writing
\beq
\widehat{P_0 f}=\sum_{\theta\in \Sigma_k}\widehat{f_{\theta}}:=\sum_{\theta\in \Sigma_k}  \psi_0(2^{\frac{k}{2}} (\frac{\xi}{\eta}-\theta))\cdot \widehat{P_0 f}.
\endeq

The estimate \eqref{0823a.9} in Theorem \ref{theorema.2} follows immediately from the following two results. 
\begin{prop}[Bourgain \cite{Bou13}, Bourgain and Demeter \cite{BD15}]\label{weakdecoupling}
For each $2\le p\le 4$ and each $\epsilon>0$, we have 
\beq
\|E_k f\|_{L^p(\R^3)}\lesim 2^{\frac{k}{2}(\frac{1}{2}-\frac{1}{p})+\epsilon} (\sum_{\theta\in \Sigma_k} \|E_k f_{\theta}\|_{L^p(\R^3)}^p)^{\frac{1}{p}}.
\endeq
Here the implicit constant depends only on $p$, $\alpha$ and $\epsilon$.
\end{prop}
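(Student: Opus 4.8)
The plan is to recognise $E_k$ as a Fourier extension operator over a truncated, non-degenerate cone in $\R^3$ and then to invoke the $\ell^2$ decoupling inequality for such cones, converting it to the $\ell^p$-form in the statement by H\"older's inequality in the sum over caps. First I would perform the substitution $s:=\xi/\eta$, $r:=\eta$ in the integral defining $E_k$: the phase becomes $x\xi+y\eta+u\xi^2/\eta=(x,y,u)\cdot r(s,1,s^2)$, so the surface carried by $E_k$ is $\{\,r\,(s,1,s^2)\,:\,r\sim 1,\ |s|\lesssim 1\,\}$, i.e.\ a compact piece, bounded away from the vertex, of the cone over the parabola $(s,1,s^2)$. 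This is exactly the model for Bourgain--Demeter cone decoupling in $\R^3$, with one non-vanishing principal curvature transverse to the generators. On the relevant frequency support $|(\xi,\eta)|\sim 1$, $|s|\le 100$, the cutoff $\varphi_0(-\xi/(100\eta))$ equals $1$ and the remaining amplitude $(1+\xi^2+\eta^2)^{-1/4}$ (together with the Jacobian factor $r\sim 1$) is smooth and elliptic, bounded above and below; multiplication by it commutes with the cap decomposition, and its inverse Fourier transform, as well as that of its reciprocal, is integrable, so it does not affect decoupling. Likewise $\psi_k(u)$ merely localises $u$ to $|u|\sim 2^k$, which is precisely the physical ball of radius $\delta^{-1}$ associated with the decoupling scale $\delta:=2^{-k}$.

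Next I would note that, under this identification, the pieces $\{f_\theta\}_{\theta\in\Sigma_k}$ are exactly the decomposition of the cone into angular sectors of width $2^{-k/2}=\delta^{1/2}$ in the variable $s$, and that $N:=\#\Sigma_k\sim 2^{k/2}=\delta^{-1/2}$. One then applies the $\ell^2$ cone decoupling inequality in $\R^3$ — for $2\le p\le 6$ by Bourgain--Demeter \cite{BD15}, and for the restricted range $2\le p\le 4$ needed here by the elementary argument of Bourgain \cite{Bou13} reproduced in Appendix \ref{AppendixB} — to obtain
\[ \|E_k f\|_{L^p(\R^3)}\ \lesssim_{p,\epsilon}\ 2^{\epsilon k}\Big(\sum_{\theta\in\Sigma_k}\|E_k f_\theta\|_{L^p(\R^3)}^2\Big)^{1/2}. \]
Passing from the customary $B_{2^k}$-localised form of decoupling to this global estimate on $\R^3$ in the $(x,y)$-variables is routine: tile the $(x,y)$-plane by cubes of side $2^k$, apply the local inequality with a rapidly decaying weight on each, and sum, using that the weighted local norms of each $E_k f_\theta$ reassemble to $\|E_k f_\theta\|_{L^p(\R^3)}$. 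Finally, H\"older's inequality in the $\theta$-sum gives, for $p\ge 2$,
\[ \Big(\sum_{\theta\in\Sigma_k}a_\theta^2\Big)^{1/2}\ \le\ N^{\frac12-\frac1p}\Big(\sum_{\theta\in\Sigma_k}a_\theta^p\Big)^{1/p}\ \lesssim\ 2^{\frac k2\left(\frac12-\frac1p\right)}\Big(\sum_{\theta\in\Sigma_k}a_\theta^p\Big)^{1/p}, \]
with $a_\theta:=\|E_k f_\theta\|_{L^p(\R^3)}$; combining the two displays and renaming $\epsilon$ yields the asserted bound.

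The only genuinely substantial ingredient is the cone decoupling inequality itself, so the hard part is deferred: Appendix \ref{AppendixB} will establish it in the range $2\le p\le 4$ via Bourgain's \cite{Bou13} bootstrap/induction-on-scales argument, which uses only $L^2$ orthogonality (Plancherel) together with the bilinear transversality structure of the cone, and that range suffices for our application. Everything else above — checking that the amplitude and the $u$-cutoff are harmless, matching the scale $\delta=2^{-k}$ to the cap width and cap count, the globalisation over translates of $B_{2^k}$, and the $\ell^2\to\ell^p$ H\"older step — is routine bookkeeping; in particular one never needs more than $\#\Sigma_k\sim 2^{k/2}$, so the fact that $2^{k/2}$ (hence the endpoints of $\Sigma_k$) need not be an integer causes no difficulty.
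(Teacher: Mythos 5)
Your argument is correct and rests on exactly the same black box as the paper: the Bourgain/Bourgain--Demeter decoupling inequality for a compact, vertex-avoiding piece of a non-degenerate cone in $\R^3$, with the restricted range $2\le p\le 4$ sufficing. Your scale-matching ($\delta=2^{-k}$, angular caps of width $2^{-k/2}$, spatial localisation $u\sim 2^k=\delta^{-1}$, globalisation in $(x,y)$ by tiling with weights) and the $\ell^2\to\ell^p$ H\"older step with $\#\Sigma_k\sim 2^{k/2}$ are all sound, and the latter is precisely where the factor $2^{\frac k2(\frac12-\frac1p)}$ in the statement comes from --- a point the paper leaves implicit, since it states the parabola decoupling of Appendix \ref{AppendixB} directly in $\ell^p$ form (having itself performed the same H\"older step at the level of the parabola caps). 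The one place where your packaging diverges from the paper's is the provenance of the cone inequality: Appendix \ref{AppendixB} does \emph{not} prove cone decoupling directly by a bilinear bootstrap in $\R^3$; it proves the $\ell^p$ decoupling for the parabola in $\R^2$ (Theorem \ref{weakparabola}), and the passage to the cone statement of Proposition \ref{weakdecoupling} is then obtained by Fubini in the radial/generator direction together with the Pramanik--Seeger iteration (cf.\ Proposition 8.1 of Bourgain--Demeter). So if you want your write-up to be self-contained relative to this paper, you should either insert that Fubini-plus-iteration reduction from the cone to the parabola, or cite the cone decoupling theorem itself as the external input; as written, the sentence claiming Appendix \ref{AppendixB} ``will establish it'' for the cone overstates what that appendix delivers. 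This is a citation/bookkeeping issue rather than a mathematical gap.
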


\begin{lem}\label{organisation}
For each $p\ge 2$, we have 
\beq\label{organisation19}
(\sum_{\theta\in \Sigma_k} \|E_{k} f_{\theta}\|_{L^p(\R^3)}^p)^{\frac{1}{p}} \lesim 2^{\frac{k}{p}}\|f\|_{L^2(\R^2)}.
\endeq
\end{lem}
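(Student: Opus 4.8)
The plan is to prove, for each fixed $\theta\in\Sigma_k$, two bounds on $E_k f_\theta$ — one in $L^2(\R^3)$ and one in $L^\infty(\R^3)$ — then combine them via the elementary pointwise inequality $|g|^p\le\|g\|_\infty^{p-2}|g|^2$ (valid for $p\ge2$) and sum over $\theta$ using the almost orthogonality of the pieces $f_\theta$. This is the usual route for converting $\ell^2$-type orthogonality into an $\ell^p$ statement, with a power loss in $2^k$ that ends up matching precisely the $p$-normalised cardinality of $\Sigma_k$.

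First I would establish $\|E_k f_\theta\|_{L^2(\R^3)}\lesssim 2^{k/2}\|f_\theta\|_{L^2(\R^2)}$. For each fixed $u$ in the support of $\psi_k$ (a set of measure $\sim 2^k$), I apply Plancherel in the $(x,y)$ variables; since the symbol factors $\varphi_0(-\xi/(100\eta))$ and $(1+\xi^2+\eta^2)^{-1/4}$ and the modulation $e^{iu\xi^2/\eta}$ are all bounded in modulus by $1$, the inner $(x,y)$-integral is $\lesssim|\psi_k(u)|^2\,\|f_\theta\|_{L^2}^2$, and integrating over $u$ produces the factor $2^k$. Next I would establish $\|E_k f_\theta\|_{L^\infty(\R^3)}\lesssim 2^{-k/4}\|f_\theta\|_{L^2(\R^2)}$ by Cauchy--Schwarz in the frequency integral: bounding all symbol factors and $|\psi_k|$ by $1$, the pointwise value of $E_k f_\theta$ is $\le\|f_\theta\|_{L^2}\,|\mathrm{supp}\,\widehat{f_\theta}|^{1/2}$, and $\mathrm{supp}\,\widehat{f_\theta}$ — the intersection of the annulus $\{(\xi^2+\eta^2)^{1/2}\in[1/2,3]\}$ with the sector $\{\,\xi/\eta-\theta\in 2^{-k/2}\,\mathrm{supp}\,\psi_0\,\}$ — has two-dimensional Lebesgue measure $\lesssim 2^{-k/2}$.

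To finish, for $p\ge2$ I would write $\|E_k f_\theta\|_{L^p}^p\le\|E_k f_\theta\|_{L^\infty}^{p-2}\,\|E_k f_\theta\|_{L^2}^2\lesssim 2^{-k(p-2)/4}\cdot 2^{k}\cdot\|f_\theta\|_{L^2}^p$, then sum in $\theta$, bound $\sum_\theta\|f_\theta\|_{L^2}^p$ by $\bigl(\sum_\theta\|f_\theta\|_{L^2}^2\bigr)^{p/2}$ (since $p/2\ge1$ gives $\ell^2\hookrightarrow\ell^p$), and use the finite overlap of the sectors to get $\sum_\theta\|f_\theta\|_{L^2}^2\lesssim\|P_0 f\|_{L^2}^2\le\|f\|_{L^2}^2$. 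Altogether $\sum_\theta\|E_k f_\theta\|_{L^p}^p\lesssim 2^{k-k(p-2)/4}\|f\|_{L^2}^p$, so that $\bigl(\sum_\theta\|E_k f_\theta\|_{L^p}^p\bigr)^{1/p}\lesssim 2^{(6-p)k/(4p)}\|f\|_{L^2}\lesssim 2^{k/p}\|f\|_{L^2}$, the last step because $\frac{6-p}{4p}\le\frac1p$ for all $p\ge2$ (and $k\ge1$, so a negative exponent only helps).

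The analytic content here is light and the estimate is comfortably lossy, so the only point requiring genuine care is the geometric claim $|\mathrm{supp}\,\widehat{f_\theta}|\lesssim 2^{-k/2}$: one must check that on the relevant region $|\eta|$ is bounded away from $0$ — which follows from $|\xi/\eta|\lesssim1$ there (a consequence of $|\theta|\le100$ and the width $\sim 2^{-k/2}$ of $\mathrm{supp}\,\psi_0$, together with the cutoff $\varphi_0(-\xi/(100\eta))$) — so that passing to the coordinates $\rho=(\xi^2+\eta^2)^{1/2}$, $\phi=\xi/\eta$ is a change of variables with Jacobian and inverse Jacobian of size $\sim 1$, and the sector genuinely has area $\sim(\text{radial width})\times(\text{angular width})\sim 1\cdot 2^{-k/2}$. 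I should also spell out the overlap count cleanly: since $\mathrm{supp}\,\psi_0$ has length $O(1)$ and the $\theta\in\Sigma_k$ are $2^{-k/2}$-separated, each frequency $(\xi,\eta)$ belongs to only $O(1)$ of the sets $\mathrm{supp}\,\widehat{f_\theta}$, whence $\sum_\theta|\widehat{f_\theta}|^2\lesssim|\widehat{P_0 f}|^2$ pointwise and the claimed orthogonality follows by integration.
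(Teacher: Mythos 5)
Your argument is correct and proves the lemma for all $2\le p<\infty$ (the $p=\infty$ case of the statement follows directly from your $L^\infty$ bound alone). It shares the paper's overall strategy --- play an $L^2$ estimate off against an $L^\infty$ estimate --- but both the $L^\infty$ endpoint and the mechanism of interpolation are handled differently. For the $L^\infty$ input the paper invokes Young's inequality, reducing matters to a uniform $L^1_{x,y}$ bound on the kernel of $E_k$ restricted to a single sector, which it obtains by non-stationary phase after rescaling (and which implicitly also needs Bernstein to convert $\|P_0f\|_\infty$ into $\|f\|_2$); you instead apply Cauchy--Schwarz on the frequency side and use that $\mathrm{supp}\,\widehat{f_\theta}$ is the intersection of the unit annulus with an angular sector of aperture $\sim 2^{-k/2}$, hence has measure $\lesssim 2^{-k/2}$. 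Your route is more elementary --- no oscillatory-integral estimates at all --- and even yields the stronger bound $\|E_kf_\theta\|_\infty\lesssim 2^{-k/4}\|f_\theta\|_2$, though that extra gain is discarded at the end since the target $2^{k/p}$ is already achieved by the $L^2$ bound alone. Second, the paper's ``by interpolation'' is left implicit and is slightly delicate as stated, because the inequality is vector-valued with the domain exponent frozen at $L^2$, so it is not literally Riesz--Thorin; your pointwise inequality $\int|g|^p\le\|g\|_\infty^{p-2}\int|g|^2$ combined with $\ell^2\hookrightarrow\ell^p$ and the $O(1)$ overlap of the sectors is precisely a clean way to carry out that interpolation, so in effect you have supplied the detail the paper omits. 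The two points that genuinely required care --- that $|\eta|\gtrsim 1$ on each sector so the area computation in the $(\rho,\xi/\eta)$ coordinates is legitimate, and that each frequency point lies in $O(1)$ of the sets $\mathrm{supp}\,\widehat{f_\theta}$ so that $\sum_\theta\|f_\theta\|_2^2\lesssim\|f\|_2^2$ --- are both addressed correctly.
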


The proof of Proposition \ref{weakdecoupling} can be found in Bourgain \cite{Bou13} and the last section of Bourgain and Demeter \cite{BD15}. Note that here we only rely on a weak form of the decoupling inequality; that is, the exponent $p$ is in the restricted range $2\le p\le 4$, but not $4<p\le 6$. The latter is part of the main content in Bourgain and Demeter \cite{BD15}. As the proof of Proposition \ref{weakdecoupling} is short and can be made (essentially) self-contained, we will provide it in the next section.

The proof of Lemma \ref{organisation} is standard. Here we sketch the proof. By interpolation, it suffices to prove \eqref{organisation19} for $p=2$ and $p=\infty$. At $p=2$, the proof is via a simple almost orthogonality argument. For each fixed $u\in [0, 2^k]$, we have 
\beq
(\sum_{\theta\in \Sigma_k} \|E_{k} f_{\theta}(u, \cdot)\|_{L_{x, y}^2(\R^2)}^2)^{\frac{1}{2}} \lesim (\sum_{\theta\in \Sigma_k}\|f_{\theta}\|_{L^2(\R^2)}^2)^{\frac{1}{2}}\lesim \|f\|_{L^2(\R^2)}.
\endeq
In the end, we integrate in $u$ and collect the factor $2^{\frac{k}{2}}$.\\

At $p=\infty$, by Young's inequality, it suffices to show that 
\beq
\Big\|\int_{\R^2} \varphi_0(-\frac{\xi}{100 \eta})\psi_0(\eta)\psi_0(2^{\frac{k}{2}}(\frac{\xi}{\eta}-\theta)) e^{ix\xi+iy \eta+i u\frac{\xi^{2}}{\eta}}d\xi d\eta\Big\|_{L^1_{x, y}(\R^2)} \lesim 1,
\endeq
uniformly in $\theta$ and $u\in [0, 2^k]$. Without loss of generality, we take $\theta=0$. By the change of variable $2^{\frac{k}{2}}\xi\to \xi$, the last estimate is equivalent to 
\beq\label{0824a.22}
\Big\|\int_{\R^2}\psi_0(\eta) \psi_0(\frac{\xi}{\eta}) e^{ix\xi+iy \eta+i u\frac{\xi^2}{\eta}}d\xi d\eta\Big\|_{L^1_{x, y}(\R^2)} \lesim 1,
\endeq
uniformly in $u\in [0, 1]$. However, by the non-stationary phase method, when $|x|+|y|\gg 1$, we always have 
$$\Big| \int_{\R^2} \psi_0(\eta) \psi_0(\frac{\xi}{\eta}) e^{ix\xi+iy \eta+i u\frac{\xi^2}{\eta}}d\xi d\eta \Big|\lesim \frac{1}{|x|^{10}+|y|^{10}}.$$
This further implies the desired estimate \eqref{0824a.22}.

\section{The proof of a decoupling inequality}\label{AppendixB}

For a dyadic interval $\Delta\subset [0, 1]$, define the extension operator associated with $\Delta$ and the parabola $(\xi, \xi^2)$ by 
\beq
E_{\Delta} g(x) := \int_{\Delta} g(\xi) e^{ix_1 \xi+i x_2 \xi^2}d\xi.
\endeq
In this section we will prove 
\begin{thm}[Bourgain \cite{Bou13}]\label{weakparabola}
For each $2\le p\le 4$ and $\epsilon>0$, we have 
\beq\label{weakparabolapp}
\|E_{[0, 1]} g\|_{L^p(\R^2)}\lesim \delta^{-(\frac{1}{2}-\frac{1}{p}+\epsilon)}\Big(\sum_{\Delta\subset [0, 1]; l(\Delta)=\delta}\|E_{\Delta}g\|^p_{L^p(\R^2)}\Big)^{\frac{1}{p}}.
\endeq
\end{thm}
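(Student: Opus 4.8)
The plan is to derive the full range $2\le p\le 4$ from the single case $p=4$, stated in its $\ell^2$ form, and to prove that case by induction on the scale $\delta$. It is cleanest to work on a ball $B_R$ with $R=\delta^{-2}$ (the natural dual scale) with the usual rapidly decaying weight $w_{B_R}$ adapted to $B_R$; the statement on all of $\R^2$ in \eqref{weakparabolapp} is then recovered by the standard Minkowski/locally-constant argument, or is to be read with such weights in place. Two endpoints come for free. At $p=2$ the frequency supports $\{(\xi,\xi^2):\xi\in\Delta\}$ of the pieces $E_\Delta g$ are pairwise disjoint, so Plancherel gives $\|E_{[0,1]}g\|_2^2=\sum_\Delta\|E_\Delta g\|_2^2$, i.e.\ $\ell^2L^2$-decoupling with constant $1$. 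And the target $\ell^pL^p$-inequality will follow from an $\ell^2L^p$-inequality by Cauchy--Schwarz in the sum over the $\delta^{-1}$ caps, which converts $(\sum_\Delta a_\Delta^2)^{1/2}$ into $\delta^{-(1/2-1/p)}(\sum_\Delta a_\Delta^p)^{1/p}$ and accounts exactly for the loss in \eqref{weakparabolapp}. Hence everything reduces to the $\ell^2L^4$-decoupling inequality
\[
\|E_{[0,1]}g\|_{L^4(B_R)}\lesssim_\epsilon \delta^{-\epsilon}\Big(\sum_{l(\Delta)=\delta}\|E_\Delta g\|_{L^4(w_{B_R})}^2\Big)^{1/2},
\]
since complex interpolation of the operator $(f_\Delta)_\Delta\mapsto\sum_\Delta f_\Delta$ between the $\ell^2(L^2)\to L^2$ bound and the $\ell^2(L^4)\to L^4$ bound yields $\ell^2L^p$-decoupling with constant $\lesssim_\epsilon\delta^{-\epsilon}$ for every $2\le p\le 4$.

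The $\ell^2L^4$-estimate rests on two ingredients. The first is \emph{parabolic rescaling}: for a cap $\Delta=[a,a+\sigma]\subset[0,1]$ the affine map $\xi\mapsto(\xi-a)/\sigma$ conjugates $E_\Delta$ to $E_{[0,1]}$ precomposed with a measure-preserving affine self-map of $\R^2$, so the $\ell^2L^4$-decoupling constant for subdividing $\Delta$ into caps of length $\delta$ equals that for subdividing $[0,1]$ into caps of length $\delta/\sigma$. The second is the \emph{transversal bilinear estimate}: if $\Delta_1,\Delta_2\subset[0,1]$ with $\mathrm{dist}(\Delta_1,\Delta_2)\gtrsim 1$, then
\[
\|E_{\Delta_1}g\cdot E_{\Delta_2}g\|_{L^2(\R^2)}\lesssim \|g\|_{L^2(\Delta_1)}\|g\|_{L^2(\Delta_2)}.
\]
This is elementary in two dimensions: the spatial Fourier transform of $(E_{\Delta_1}g)(E_{\Delta_2}g)$ is the push-forward of $g(\xi_1)g(\xi_2)\,d\xi_1d\xi_2$ under $\Phi(\xi_1,\xi_2)=(\xi_1+\xi_2,\xi_1^2+\xi_2^2)$, whose Jacobian $2(\xi_1-\xi_2)$ is bounded above and below on $\Delta_1\times\Delta_2$ by transversality, so $\Phi$ is a bi-Lipschitz change of variables there and Plancherel finishes it. Running the same change of variables with $\Delta_1,\Delta_2$ subdivided into length-$\delta$ caps shows in addition that the frequency supports of the products $E_\Delta g\,E_{\Delta'}g$ ($\Delta\subset\Delta_1$, $\Delta'\subset\Delta_2$) overlap boundedly, which upgrades the bilinear estimate to a \emph{bilinear} $\ell^2L^4$-decoupling inequality with an absolute constant.

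Closing the induction is the main obstacle. Let $D(\delta)$ be the best constant in the $\ell^2L^4$-decoupling inequality above and fix a large parameter $K$. Following Bourgain--Guth, partition $[0,1]$ into $K$ equal intervals $I_1,\dots,I_K$ and, pointwise on $B_R$, separate $f:=E_{[0,1]}g$ into a \emph{narrow} part, where the $L^4$-density of $f$ is concentrated (up to a factor $K^{-C}$) in an $O(1)$-block of consecutive $I_j$'s, and a \emph{broad} part, where two non-adjacent pieces $|E_{I_j}g|,|E_{I_{j'}}g|$ are each $\gtrsim K^{-C}|f|$. On the narrow part one bounds $|f|$ by a sum of $O(1)$ terms $|E_{I_{j}}g|$, hence $\|f\|_{L^4(\mathrm{narrow})}^4\lesssim\sum_j\|E_{I_j}g\|_4^4$ with an \emph{absolute} constant; the induction hypothesis on each $I_j$ (via parabolic rescaling) then bounds this by $D(K\delta)^4\big(\sum_\Delta\|E_\Delta g\|_4^2\big)^2$. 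On the broad part one applies the bilinear $\ell^2L^4$-decoupling, at the cost of a $K$-dependent but $\delta$-\emph{in}dependent constant. Altogether this produces a recursion $D(\delta)\le c_0\,D(K\delta)+C(K)$ in which the coefficient $c_0$ of $D(K\delta)$ is an absolute constant; iterating $\log_K(1/\delta)$ times gives $D(\delta)\lesssim c_0^{\log_K(1/\delta)}\cdot\mathrm{poly}\big(\log\tfrac1\delta\big)=\delta^{-\log c_0/\log K}\cdot\mathrm{poly}\big(\log\tfrac1\delta\big)$, and choosing $K$ large enough (depending on $\epsilon$) makes the exponent $<\epsilon$.

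The genuinely technical work — which I would only indicate, deferring to the standard references — is the wave-packet and locally-constant bookkeeping on the ball $B_R$: making the broad/narrow dichotomy precise, passing between the $L^2$-normalizations $\|g\|_{L^2(\Delta)}$ and the $L^4(w_{B_R})$-norms of the caps, and summing over the $O(\log\frac1\delta)$ spatial scales. I expect this to be the only real obstacle, and it is softened here by the fact that only the crude loss $\delta^{-\epsilon}$ is needed: we are at the subcritical exponent $4<6$, and the factor $\delta^{-(1/2-1/p)}$ already present in \eqref{weakparabolapp} leaves extra room, so a non-optimized version of these standard arguments suffices.
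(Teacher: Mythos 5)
Your proposal is correct and follows essentially the same route as the paper's Appendix B: reduction to the $\ell^2 L^4$ inequality on balls of radius $\delta^{-2}$ (with the $\ell^p$ form and the range $2\le p\le 4$ recovered by H\"older/Cauchy--Schwarz and interpolation with the trivial $p=2$ case), the transversal bilinear estimate via Plancherel, its upgrade to a bilinear $\ell^2 L^4$ decoupling, and the Bourgain--Guth broad/narrow dichotomy yielding the recursion $D(\delta)\le C\,D(K\delta)+C_K$ which is iterated $\log_K(1/\delta)$ times with $K$ chosen large depending on $\epsilon$. The only differences are expository (you spell out the Jacobian $2(\xi_1-\xi_2)$ computation and the abstract form of the recursion, which the paper leaves implicit).
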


Proposition \ref{weakdecoupling} follows from Theorem \ref{weakparabola} via Fubini's theorem and an iteration argument. This iteration first appeared in the work of Pramanik and Seeger \cite{PS}. We refer to Proposition 8.1 in Bourgain and Demeter \cite{BD15} for the details. \\

In the remaining part, we will provide a proof of Theorem \ref{weakparabola}. First of all, by a simple localisation argument, and by H\"older's inequality, the estimate \eqref{weakparabolapp} follows from 
\beq\label{localisation}
\|E_{[0, 1]} g\|_{L^p(w_B)}\lesim \delta^{-\epsilon}\Big(\sum_{\Delta\subset [0, 1]; l(\Delta)=\delta}\|E_{\Delta}g\|^2_{L^p(w_B)}\Big)^{\frac{1}{2}},
\endeq
for each ball $B$ of radius $\delta^{-2}$. Here $w_B$ is a weight associated with $B$ given by 
$$w_B(x):=(1+\frac{\|x-c_B\|}{\delta^{-2}})^{-N},$$
for a large integer $N$ which will not be specified. Secondly, to prove \eqref{localisation} for all $2\le p\le 4$, by interpolation with the trivial bound at $p=2$, it suffices to look at the case $p=4$. We refer to Garrig\'os and Seeger \cite{GS10} for the details of such an interpolation argument. 

The proof of \eqref{localisation} for $p=4$ will be accomplished in three steps, which correspond to the following three subsections. 

\subsection{A bilinear restriction estimate for parabola}

The following proposition follows simply via Plancherel's theorem. 
\begin{prop}
Let $R_1, R_2\subset [0, 1]$ be two dyadic intervals with $dist(R_1, R_2)\ge \nu$ for some $\nu>0$. We have the bilinear restriction estimate
\beq\label{bilinear}
\Big\| |E_{R_1}g_1 E_{R_2}g_2|^{\frac{1}{2}} \Big\|_{L^4(\R^2)} \lesim_{\nu} \|g_1\|_2^{\frac{1}{2}}\|g_2\|_2^{\frac{1}{2}}.
\endeq 
\end{prop}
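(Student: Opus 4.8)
The plan is to prove the bilinear restriction estimate \eqref{bilinear} by reducing it, via Plancherel's theorem, to a convolution estimate for the measures carried by the two parabolic arcs. First I would write $E_{R_i} g_i$ as the inverse Fourier transform of the measure $d\mu_i = g_i(\xi)\,\delta(\eta - \xi^2)\,\mathbbm{1}_{R_i}(\xi)\,d\xi$ on $\R^2$; that is, $E_{R_i}g_i(x) = \widehat{\mu_i}(x)$ up to normalisation. Then $E_{R_1}g_1 \cdot E_{R_2}g_2 = \widehat{\mu_1 * \mu_2}$, so by Plancherel
\[
\bigl\| |E_{R_1}g_1\, E_{R_2}g_2|^{1/2}\bigr\|_{L^4(\R^2)}^4 = \|E_{R_1}g_1\, E_{R_2}g_2\|_{L^2(\R^2)}^2 = \|\mu_1 * \mu_2\|_{L^2(\R^2)}^2.
\]
Thus it suffices to show $\|\mu_1 * \mu_2\|_{L^2(\R^2)} \lesssim_\nu \|g_1\|_2 \|g_2\|_2$.

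The second step is the key geometric input: I would compute $\mu_1 * \mu_2$ explicitly. Since
\[
(\mu_1 * \mu_2)(\zeta_1, \zeta_2) = \int_{R_1} \int_{R_2} g_1(\xi_1) g_2(\xi_2)\, \delta\bigl(\zeta_1 - \xi_1 - \xi_2\bigr)\,\delta\bigl(\zeta_2 - \xi_1^2 - \xi_2^2\bigr)\, d\xi_1\, d\xi_2,
\]
for fixed $(\zeta_1,\zeta_2)$ the constraints $\xi_1 + \xi_2 = \zeta_1$, $\xi_1^2 + \xi_2^2 = \zeta_2$ determine $(\xi_1,\xi_2)$ up to the swap $\xi_1 \leftrightarrow \xi_2$; but the separation hypothesis $\mathrm{dist}(R_1,R_2) \ge \nu$ forces $|\xi_1 - \xi_2| \ge \nu$, so there is at most one solution with $\xi_1 \in R_1$, $\xi_2 \in R_2$. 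The Jacobian of the map $(\xi_1,\xi_2) \mapsto (\xi_1+\xi_2, \xi_1^2+\xi_2^2)$ is $2(\xi_2 - \xi_1)$, which has modulus $\gtrsim \nu$ on the relevant region. Hence, performing the change of variables,
\[
|(\mu_1 * \mu_2)(\zeta_1,\zeta_2)| \lesssim \frac{1}{\nu}\, |g_1(\xi_1(\zeta))|\, |g_2(\xi_2(\zeta))|\, \mathbbm{1}_{\Omega}(\zeta),
\]
where $\Omega$ is the image of $R_1 \times R_2$.

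The third step is to take the $L^2(d\zeta)$ norm and undo the change of variables. Squaring and integrating over the range of $\zeta$, then changing back to the $(\xi_1,\xi_2)$ variables (which again costs a Jacobian factor of order $1/\nu$, bounded since $|\xi_2 - \xi_1| \ge \nu$), yields
\[
\|\mu_1 * \mu_2\|_{L^2(\R^2)}^2 \lesssim \frac{1}{\nu} \int_{R_1}\int_{R_2} |g_1(\xi_1)|^2 |g_2(\xi_2)|^2 \, d\xi_1\, d\xi_2 = \frac{1}{\nu}\, \|g_1\|_2^2\, \|g_2\|_2^2,
\]
which is the claim with an implicit constant depending only on $\nu$ (one may absorb the $\nu$-dependence into $\lesssim_\nu$, or note that in the actual application $\nu$ is an absolute constant). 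The only point requiring a little care — and thus the main (mild) obstacle — is justifying the manipulation of the Dirac masses rigorously: this is best handled by mollifying the surface measures, running the above argument uniformly in the mollification parameter, and passing to the limit, or equivalently by viewing $\mu_1 * \mu_2$ as an honest $L^2$ function via the coarea formula from the start. Everything else is a direct computation.
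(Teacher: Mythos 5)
Your argument is correct and is exactly the standard Plancherel computation that the paper invokes (it states only that the proposition ``follows simply via Plancherel's theorem'' and leaves the details to the reader): you pass to $\|\mu_1\ast\mu_2\|_{L^2}$ and exploit that the Jacobian $2(\xi_2-\xi_1)$ of $(\xi_1,\xi_2)\mapsto(\xi_1+\xi_2,\xi_1^2+\xi_2^2)$ is bounded below by $2\nu$ on $R_1\times R_2$. The only nitpick is in the last change of variables: going back to $(\xi_1,\xi_2)$ multiplies by $|2(\xi_2-\xi_1)|\le 2$ rather than costing another $1/\nu$, but either accounting yields a constant depending only on $\nu$, which is all that is claimed.
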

The details are left to the interested reader. 

\subsection{A bilinear decoupling inequality}

Recall that $R_1$ and $R_2$ are two dyadic intervals whose distance is not smaller than $\nu$. 
\begin{prop}\label{bilineardecoupling}
We have a bilinear version of the desired decoupling inequality \eqref{localisation}:
\beq
\Big\| (E_{R_1} g_1 E_{R_2}g_2)^{\frac{1}{2}} \Big\|_{L^4(w_B)} \lesim_{\nu} \Big( \prod_{j=1}^2 \sum_{\Delta\subset R_j: l(\Delta)=\delta} \|E_{\Delta}g_j\|_{L^4(w_B)}^2 \Big)^{\frac{1}{4}},
\endeq
for each ball $B$ of radius $\delta^{-2}$.
\end{prop}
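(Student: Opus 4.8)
The plan is to deduce the bilinear decoupling inequality from a single application of $L^2$ almost-orthogonality, using the transversality of the two parabola arcs lying above $R_1$ and $R_2$ in the shape of a finite-overlap property for certain frequency boxes; in particular the per-pair restriction estimate \eqref{bilinear} will not be needed. For $j=1,2$ I would write $g_j=\sum_{\Delta} g_{j,\Delta}$, where $\Delta$ runs over the dyadic subintervals of $R_j$ of length $\delta$ and $g_{j,\Delta}:=g_j\mathbbm{1}_{\Delta}$; then $E_\Delta g_j=E_\Delta g_{j,\Delta}$ and $E_{R_j}g_j=\sum_\Delta E_\Delta g_{j,\Delta}$. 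We may clearly assume $\delta<\nu$, since otherwise each $R_j$ contains only $O_\nu(1)$ caps and the estimate is trivial. The elementary but crucial observation is that, for caps $\Delta_1\subset R_1$, $\Delta_2\subset R_2$ with centres $c_1,c_2$, the product $E_{\Delta_1}g_{1,\Delta_1}\cdot E_{\Delta_2}g_{2,\Delta_2}$ has Fourier transform supported in
$$\theta_{\Delta_1,\Delta_2}:=\{(\xi_1+\xi_2,\xi_1^2+\xi_2^2):\xi_1\in\Delta_1,\ \xi_2\in\Delta_2\},$$
which (since $c_1,c_2\in[0,1]$) is contained in a rectangle of dimensions $O(\delta)\times O(\delta)$ centred at $(c_1+c_2,c_1^2+c_2^2)$.

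Next I would verify that the boxes $\{\theta_{\Delta_1,\Delta_2}\}$ are $O_\nu(1)$-overlapping, even after dilating each by a factor $O(\delta^2)$; this is the one place the separation hypothesis $\mathrm{dist}(R_1,R_2)\ge\nu$ is used. If the $\delta^2$-neighbourhoods of $\theta_{\Delta_1,\Delta_2}$ and $\theta_{\Delta_1',\Delta_2'}$ intersect, then, writing $p:=c_1-c_1'$ and $q:=c_2-c_2'$, matching first coordinates forces $|p+q|\lesssim\delta$, and then matching second coordinates, via $c_1^2+c_2^2-c_1'^2-c_2'^2=p(c_1+c_1'-c_2-c_2')+(p+q)(c_2+c_2')$, gives $|p|\cdot|c_1+c_1'-c_2-c_2'|\lesssim\delta$. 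Since $c_1,c_1'\in R_1$ and $c_2,c_2'\in R_2$ are $\nu$-separated we have $|c_1+c_1'-c_2-c_2'|\ge 2\nu$, hence $|p|\lesssim\delta/\nu$ and so also $|q|\lesssim\delta/\nu$. As $p$ and $q$ are integer multiples of $\delta$, there are only $O(\nu^{-2})$ admissible pairs $(\Delta_1',\Delta_2')$ for each fixed $(\Delta_1,\Delta_2)$; the $\delta^2$-fattening is harmless because $\delta^2\ll\delta/\nu$.

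With these two facts in hand the rest is immediate. By $L^2$ almost-orthogonality with respect to the rapidly decaying weight $w_B$ — whose Fourier transform lives essentially at scale $\delta^2$ since $B$ has radius $\delta^{-2}$, so the finite-overlap bound applies and a standard Schur-type estimate absorbs the tails — one obtains
$$\|E_{R_1}g_1\cdot E_{R_2}g_2\|_{L^2(w_B)}^2\ \lesssim_\nu\ \sum_{\Delta_1,\Delta_2}\|E_{\Delta_1}g_{1,\Delta_1}\cdot E_{\Delta_2}g_{2,\Delta_2}\|_{L^2(w_B)}^2 .$$
Applying Hölder's inequality for the measure $w_B\,dx$ (splitting $w_B=w_B^{1/2}w_B^{1/2}$) to each summand bounds it by $\|E_{\Delta_1}g_{1,\Delta_1}\|_{L^4(w_B)}^2\|E_{\Delta_2}g_{2,\Delta_2}\|_{L^4(w_B)}^2$, so the right-hand side factors as
$$\Big(\sum_{\Delta_1\subset R_1}\|E_{\Delta_1}g_1\|_{L^4(w_B)}^2\Big)\Big(\sum_{\Delta_2\subset R_2}\|E_{\Delta_2}g_2\|_{L^4(w_B)}^2\Big).$$
Since $\|(E_{R_1}g_1\cdot E_{R_2}g_2)^{1/2}\|_{L^4(w_B)}^4=\|E_{R_1}g_1\cdot E_{R_2}g_2\|_{L^2(w_B)}^2$, taking fourth roots gives precisely the assertion of the proposition.

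The only substantive point is the overlap computation of the second paragraph: this is where transversality enters, and where one must check that the $O(\delta^2)$ uncertainty introduced by localising to the ball $B$ of radius $\delta^{-2}$ does not degrade the overlap bound. Everything else is soft. One could instead attempt to feed the per-pair estimate \eqref{bilinear} into this scheme, but that route would require a reverse inequality controlling $\|g_{j,\Delta}\|_{L^2}$ by $\|E_\Delta g_j\|_{L^4(w_B)}$, which costs a power of $\delta$ and is therefore inferior to the orthogonality argument above.
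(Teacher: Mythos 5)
Your proof is correct, but it follows a genuinely different route from the one in the paper. The paper deduces the proposition from the bilinear restriction estimate \eqref{bilinear}: it thickens the two arcs to $\delta^2$-neighbourhoods $\tau_1,\tau_2$, applies \eqref{bilinear} fibrewise together with Minkowski's inequality to get $\|(\hat f_1\hat f_2)^{1/2}\|_{L^4(w_B)}\lesssim \delta^{-1}\|f_1\|_2^{1/2}\|f_2\|_2^{1/2}$, then uses $L^2$ orthogonality of the pieces $f_{j,\theta}$ and a localisation/H\"older step to convert the $L^2$ norms of the densities into the $\ell^2(L^4(w_B))$ norms of the cap extensions. You instead run C\'ordoba's biorthogonality argument directly on the product: the Fourier support of $E_{\Delta_1}g_1\,E_{\Delta_2}g_2$ sits in an $O(\delta)\times O(\delta)$ box around $(c_1+c_2,c_1^2+c_2^2)$, transversality ($|c_1+c_1'-c_2-c_2'|\ge 2\nu$) forces these boxes to be $O(\nu^{-2})$-overlapping, and then weighted $L^2$ almost-orthogonality plus Cauchy--Schwarz on each pair factors the sum; your algebra for the overlap count and the reduction $\|(E_{R_1}g_1E_{R_2}g_2)^{1/2}\|_{L^4(w_B)}^4=\|E_{R_1}g_1E_{R_2}g_2\|_{L^2(w_B)}^2$ are both correct. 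What each approach buys: yours is entirely self-contained, never needs Proposition \eqref{bilinear}, and avoids the least transparent step of the paper's sketch (the conversion between $\|f_{j,\theta}\|_2$ and $\|\widehat{f_{j,\theta}}\|_{L^4(w_B)}$, where the powers of $\delta$ have to be tracked carefully); the paper's route, by factoring through a bilinear restriction estimate, is the one that survives when $p>4$ or in higher dimensions, where Plancherel-based biorthogonality is no longer available and one must input genuine (multi)linear restriction — which is presumably why the authors present it in that form, mirroring the Bourgain--Demeter scheme. At the $L^4$ level for the parabola, which is all that is needed here, your argument is the more elementary of the two.
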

We start by introducing some notation. Let $\tau_j$ be the $\delta^2$-neighbourhood of the parabola that lies on top of $R_j$; that is, 
$$\tau_j:=\{(\xi, \xi^2+\eta): \xi\in R_j, |\eta|\le \delta^2\}.$$
We let $\mc{P}_j$ be finitely overlapping cover of $\tau_j$ with curved regions $\theta$ of the form
$$\theta=\{(\xi, \xi^2+\eta): \xi\in [c, c+\delta], |\eta|\le \delta^2\} \text{ for some constant } c.$$
For a function $f$ supported on $\tau_j$, we let $f_{\theta}$ to denote the restriction of $f$ on $\theta$. \\

Let $f_1$ and $f_2$ be two functions supported on $\tau_1$ and $\tau_2$ separately. The bilinear estimate \eqref{bilinear} implies 
\beq
\|(\hat{f_1}\hat{f_2})^{\frac{1}{2}}\|_{L^4(w_B)} \lesim \delta^{-1} \|f_1\|_{L^2(\R^2)}^{\frac{1}{2}}\|f_2\|_{L^2(\R^2)}^{\frac{1}{2}}.
\endeq
This can be done by applying \eqref{bilinear} to each fibre $\{(\xi, \xi^2+\eta_j)\}$ for fixed $\eta_j$ ($j=1, 2$), and then applying Minkowski's inequality to the variables $\eta_j$. Now we apply the $L^2$ orthogonality and a simple localisation argument, to further obtain 
\beq
\|(\hat{f_1}\hat{f_2})^{\frac{1}{2}}\|_{L^4(w_B)} \lesim \delta^{-1} \Big(\prod_{j=1}^2 \sum_{\theta\in \mathcal{P}_j} \|\widehat{f_{j, \theta}}\|_{L^2(w_B)}^2 \Big)^{\frac{1}{4}}.
\endeq
In the end, we apply H\"older's inequality to the right hand side of the last expression
\beq
\|(\hat{f_1}\hat{f_2})^{\frac{1}{2}}\|_{L^4(w_B)} \lesim \Big(\prod_{j=1}^2 \sum_{\theta\in \mathcal{P}_j} \|\widehat{f_{j, \theta}}\|_{L^4(w_B)}^2 \Big)^{\frac{1}{4}}.
\endeq
This implies the desired estimate in Proposition \ref{bilineardecoupling} by taking $f_j=E_{R_j}g_j$.

\subsection{Bilinear decoupling implies linear decoupling}

We come to the final step of proving the desired decoupling inequality \eqref{localisation}. The idea is that the bilinear decoupling inequality in Proposition \ref{bilineardecoupling} will imply \eqref{localisation}. This is done via a simple version of the Bourgain-Guth argument from \cite{BG11}. \\

We proceed with the details. Fix a large constant $K\ll \delta^{-1}$. We split the interval $[0, 1]$ into smaller intervals of length $K^{-1}$. We use $\alpha$ to denote such an interval. Then on each ball $B_K$ of radius $K$, the function $|E_{\alpha}g|$ behaves like a constant. Hence for convenience, we will use $|E_{\alpha}g|(B_K)$ to denote such a value. 

For each given $B_K$, we let $\alpha^{*}$ denote the interval that maximises $|E_{\alpha^{*}}g|(B_K)$. We look at the collection of $\alpha$, with $dist(\alpha^{*}, \alpha)\ge \frac{1}{K}$, and 
$$|E_{\alpha}g|(B_K)\ge \frac{1}{10 K}|E_{\alpha^{*}}g(B_k)|.$$
There are two cases. The first case is that this collection is empty. Then 
\beq
|E_{[0, 1]}g|(x)\lesim |E_{\alpha^{*}}g|(B_k), \text{ for each }x\in B_k.
\endeq
The second case is that this collection contains at least one element. Call it $\alpha^{**}$. Then 
\beq
|E_{[0, 1]}g|(B_K)\lesim K^3 |E_{\alpha^{*}}g|^{\frac{1}{2}}(B_K)|E_{\alpha^{**}}g|^{\frac{1}{2}}(B_K).
\endeq
Putting these two estimates together, we obtain 
\beq\label{0824b.11}
\begin{split}
\|E_{[0, 1]}g\|_{L^4(w_{B_K})} & \le C \Big( \sum_{\alpha: l(\alpha)=\frac{1}{K}} \|E_{\alpha}g\|^2_{L^4(w_{B_K})} \Big)^{\frac{1}{2}}\\
		& + C \cdot K^3 \Big( \sum_{dist(\alpha_1, \alpha_2)\ge \frac{1}{K}} \Big\||E_{\alpha_1}g|^{\frac{1}{2}}|E_{\alpha_2}g|^{\frac{1}{2}}\Big\|^2_{L^4(w_{B_K})} \Big)^{\frac{1}{2}},
\end{split}
\endeq
for a universal constant $C$. We raise both sides of this estimate to the $4$-th power, and sum over all balls $B_K$ inside $B$, a ball of radius $\delta^{-2}$, to obtain that \eqref{0824b.11} indeed holds true with $B_K$ being replaced by $B$. Now we apply the bilinear decoupling inequality that has been proven in the previous subsection, to obtain 
\beq
\begin{split}
\|E_{[0, 1]}g\|_{L^4(w_{B})}  & \le C \Big( \sum_{\alpha: l(\alpha)=\frac{1}{K}} \|E_{\alpha}g\|^2_{L^4(w_{B})} \Big)^{\frac{1}{2}} +C_K\cdot K^{10} \Big( \sum_{\Delta: l(\Delta)=\delta} \|E_{\Delta} g\|^2_{L^4(w_B)} \Big)^{\frac{1}{2}},
\end{split}
\endeq
for a possibly larger $C$ and a constant $C_K$ depending on $K$. In the end, by invoking a parabolic rescaling, we iterate the last estimate for $\log_K(\frac{1}{\delta})$ many times, and obtain 
\beq
\|E_{[0, 1]}g\|_{L^p(w_{B})} \le C^{\log_{K}(\frac{1}{\delta})}\cdot C_K\cdot  K^{10} \Big( \sum_{\Delta: l(\Delta)=\delta} \|E_{\Delta} g\|^2_{L^p(w_B)} \Big)^{\frac{1}{2}}.
\endeq
We just need to observe that by choosing $K$ large enough, compared with $C$, we can always have the desired estimate \eqref{localisation}.


\begin{thebibliography}{CCCD99}

\bibitem[Bat13]{Bateman} Bateman, M. \emph{Single annulus $L^p$ estimates for Hilbert transforms along vector fields.}  Rev. Mat. Iberoam. 29 (2013), no. 3, 1021-1069.

\bibitem[BT13]{BT} Bateman, M. and Thiele, C. \emph{$L^p$ estimates for the Hilbert transforms along a one-variable vector field.}  Anal. PDE 6 (2013), no. 7, 1577-1600.

\bibitem[Bou86]{Bou86} Bourgain, J. \emph{Averages in the plane over convex curves and maximal operators.} J. Analyse Math. 47 (1986), 69-85. 

\bibitem[Bou89]{Bo} Bourgain, J. \emph{A remark on the maximal function associated to an analytic vector field.} Analysis at Urbana, Vol. I (Urbana, IL, 1986-1987), 111-132, London Math. Soc. Lecture Note Ser., 137, Cambridge Univ. Press, Cambridge, 1989.

\bibitem[Bou13]{Bou13} Bourgain, J. \emph{Moment inequalities for trigonometric polynomials with spectrum in curved hypersurfaces.} Israel J. Math. 193 (2013), no. 1, 441-458.

\bibitem[BD15]{BD15} Bourgain, J. and Demeter, C. \emph{The proof of the $l^2$ decoupling conjecture.} Ann. of Math. (2) 182 (2015), no. 1, 351-389. 

\bibitem[BG11]{BG11} Bourgain, J. and  Guth, L. {\em Bounds on oscillatory integral operators based on multilinear estimates}. Geom. Funct. Anal. 21 (2011), no. 6, 1239-1295

\bibitem[CSWW99]{CSWW} Carbery, A., Seeger, A., Wainger, S. and Wright, J. \emph{Classes of singular integral operators along variable lines.} J. Geom. Anal. 9 (1999), no. 4, 583-605. 

\bibitem[CCCD99]{CCC} Carlsson, H., Christ, M., Cordoba, A., Duoandikoetxea, J., Rubio de Francia, J. L., Vance, J., Wainger, S. and Weinberg, D. \emph{$L^p$ estimates for maximal functions and Hilbert transforms along flat convex curves in $R^2$.} Bull. Amer. Math. Soc. (N.S.) 14 (1986), no. 2, 263-267. 
 
\bibitem[CNSW99]{CNSW} Christ, M., Nagel, A., Stein, E. and Wainger, S. \emph{Singular and maximal Radon transforms: analysis and geometry.} Ann. of Math. (2) 150 (1999), no. 2, 489-577. 



\bibitem[FS71]{FeffermanStein} Fefferman, C and Stein, E. \emph{Some Maximal Inequalities.}  American Journal of Mathematics. Vol. 93, No. 1 (Jan., 1971), pp. 107-115.

\bibitem[GS10]{GS10} Garrig\'os, G. and Seeger, A. \emph{A mixed norm variant of Wolff's inequality for paraboloids.} Harmonic analysis and partial differential equations, 179-197, Contemp. Math., 505, Amer. Math. Soc., Providence, RI, 2010. 



\bibitem[Guo16]{Guo2} Guo, S. \emph{Single annulus estimates for the variation-norm Hilbert transforms along Lipschitz vector fields.} To appear in the Proc. Amer. Math. Soc.

\bibitem[GPRY16]{GPRY} Guo, S., Pierce, L., Roos, J. and Yung, P.-L. \emph{Polynomial Carleson operators along monomial curves in the plane.} arXiv:1605.05812






\bibitem[HKY09]{HKY} Hong, S., Kim, J. and Yang, C. \emph{Maximal operators associated with vector polynomials of lacunary coefficients.} Canad. J. Math. 61 (2009), no. 4, 807-827.

\bibitem[Ios94]{Io} Iosevich, A. \emph{Maximal operators associated to families of flat curves in the plane.} Duke Math. J. 76 (1994), no. 2, 633-644.



\bibitem[Kar07]{Kara} Karagulyan, G. \emph{On unboundedness of maximal operators for directional Hilbert transforms.} Proc. Amer. Math. Soc. 135 (2007), no. 10, 3133-3141 (electronic).

\bibitem[LL06]{LL1} Lacey, M. and Li, X. \emph{Maximal theorems for the directional Hilbert transform on the plane.} Trans. Amer. Math. Soc. 358 (2006), no. 9, 4099-4117.

\bibitem[LL10]{LL2} Lacey, M. and Li, X. \emph{On a conjecture of E. M. Stein on the Hilbert transform on vector fields.}  Mem. Amer. Math. Soc. 205 (2010), no. 965, viii+72 pp.


\bibitem[Lie09]{Lie1} Lie, V. \emph{The (weak-$L^2$) boundedness of the quadratic Carleson operator.} Geom. Funct. Anal. 19 (2009), no. 2, 457-497.

\bibitem[Lie11]{Lie2} Lie, V. \emph{The Polynomial Carleson Operator.} arXiv:1105.4504.

\bibitem[Lie15]{Lie3} Lie, V. \emph{The boundedness of the Bilinear Hilbert Transform along non-flat smooth curves.} Amer. J. Math., 137(2), pp. 313-364, 2015.

\bibitem[Lie16]{Lie4} Lie, V. \emph{On the Boundedness of the Bilinear Hilbert Transform along ��non-flat�� smooth curves. The Banach Triangle case ($L^r$, $1\leq r<\infty$).} Rev. Mat. Iberoam, to appear 2016.

\bibitem[MR98]{MR} Marletta, G. and Ricci, F. \emph{Two-parameter maximal functions associated with homogeneous surfaces in $R^n$.} Studia Math. 130 (1998), no. 1, 53-65.

\bibitem[MSS92]{MSS92} Mockenhaupt, G., Seeger, A. and Sogge, C. \emph{Wave front sets, local smoothing and Bourgain's circular maximal theorem.} Ann. of Math. (2) 136 (1992), no. 1, 207-218. 

\bibitem[Mus14]{Mu} Muscalu, C. \emph{Calderon commutators and the Cauchy integral on Lipschitz curves revisited I. First commutator and generalizations.} Rev. Mat. Iberoam. 30 (2014), no. 2, 727-750.

\bibitem[NSW78]{NSW} Nagel, A., Stein, E. and Wainger, S. \emph{Differentiation in lacunary directions.} Proc. Nat. Acad. Sci. U.S.A. 75 (1978), no. 3, 1060-1062.

\bibitem[NSW79]{NSW78} Nagel, A., Stein, E. and Wainger, S. \emph{Hilbert transforms and maximal functions related to variable curves.} Harmonic analysis in Euclidean spaces (Proc. Sympos. Pure Math., Williams Coll., Williamstown, Mass., 1978), Part 1, pp. 95-98, Proc. Sympos. Pure Math., XXXV, Part, Amer. Math. Soc., Providence, R.I., 1979. 

\bibitem[OSTTW12]{OSTTW12} Oberlin, R., Seeger, A., Tao, T., Thiele, C. and Wright, J. \emph{A variation norm Carleson theorem.} J. Eur. Math. Soc. (JEMS) 14 (2012), no. 2, 421-464. 

\bibitem[PS07]{PS} Pramanik, M. and Seeger, A. \emph{$L^p$ regularity of averages over curves and bounds for associated maximal operators.} Amer. J. Math. 129 (2007), no. 1, 61-103. 



\bibitem[See94]{See94} Seeger, A. \emph{$L^2$-estimates for a class of singular oscillatory integrals.} Math. Res. Lett. 1 (1994), no. 1, 65-73. 

\bibitem[Ste76]{Ste76} Stein, E. \emph{Maximal functions. I. Spherical means.} Proc. Nat. Acad. Sci. U.S.A. 73 (1976), no. 7, 2174-2175. 

\bibitem[Ste93]{Stein} Stein, E. \emph{Harmonic analysis: real-variable methods, orthogonality, and oscillatory integrals. With the assistance of Timothy S. Murphy.} Princeton Mathematical Series, 43. Monographs in Harmonic Analysis, III. Princeton University Press, Princeton, NJ, 1993. xiv+695 pp.

\bibitem[SS12]{SS} Stein, E. and Street, B. \emph{Multi-parameter singular Radon transforms III: Real analytic surfaces.} Adv. Math. 229 (2012), no. 4, 2210-2238.

\bibitem[SW01]{SW} Stein, E. and Wainger, S. \emph{Oscillatory integrals related to Carleson's theorem.} Math. Res. Lett. 8 (2001), no. 5-6, 789-800.

\bibitem[SW03]{SW03} Seeger, A. and Wainger, S. \emph{Singular Radon transforms and maximal functions under convexity assumptions.} Rev. Mat. Iberoamericana 19 (2003), no. 3, 1019-1044.



\bibitem[Wol00]{Wolff00} Wolff, T. \emph{Local smoothing type estimates on $L^p$ for large $p$. } Geom. Funct. Anal. 10 (2000), no. 5, 1237-1288.

\end{thebibliography}
\end{document}